\pgfplotsset{compat=newest}
\newtheorem{theorem}{Theorem}
\newtheorem{proposition}[theorem]{Proposition}%
\newtheorem{hypothesis}{Hypothesis}
\newcommand{\sethypotesistag}[1]{
  \let\oldthehypothesis\thehypothesis
  \renewcommand{\thehypothesis}{#1}
  \g@addto@macro\endhypothesis{
    \addtocounter{hypothesis}{-1}
    \global\let\thehypothesis\oldthehypothesis}
  }
\newtheorem{example}{Example}%
\newtheorem{lemma}[theorem]{Lemma}
\newtheorem{remark}{Remark}%
\newtheorem{definition}{Definition}%
\newcommand{\Nte}[1]{  \textcolor{red}{#1}  } 
\newcommand{\Njd}[1]{  \textcolor{orange}{#1}  } 
\newcommand{\Dist}{\mathcal{D}}
\newcommand{\pfrac}[2]{ {#1}/{#2}  } 
\def\dint{{\displaystyle \int}}
\renewcommand{\exp}[1]{ \mbox{exp} \left(  #1 \right) }
\renewcommand{\log}[1]{ \mbox{log} \left(  #1 \right) }
\def\E{{\mathbb E}}
\def\Ec{{\mathcal H}}
\def\Nc{{\mathcal N}}
\def\Xc{{\mathcal X}}
\def\Vc{{\mathcal V}}
\def\de{{\delta,\ee}} 
\def\se{{\ee,\sigma}}
\def\Wc{{\mathcal W}}
\def\D{{a}}
\def\V{{b}}
\def\u{{\mathcal U}}
\def\v{{\mathcal D}}
\def\J{{\mathcal J}}
\def\K{{c}}
\def\I{{\mathcal I}}
\def\ee{{\varepsilon}}
\def\etae{{\eta_\ee}}
\def\cqq{{\coloneqq}}
\def\Fc{{\mathcal F}}
\def\Xc{{\mathcal X}}
\def\Hc{{\mathcal H}}
\def\Ic{{\mathcal I}}
\def\S{{\mathcal S}}
\def\Lc{{\mathcal L}}
\def\R{{\mathbb R}}
\def\Pc{{\mathcal P}}
\def\Q{{\mathbb Q}}
\def\tP{{\widetilde{\mathbb P}}}
\def\tV{\widetilde{b}}
\def\tD{\widetilde{a}}
\def\Dd{{a}_\delta}
\def\Vd{{b}_\delta}
\def\Pd{\P_\delta}
\def\Xd{X_{\delta}}
\def\tro{\widetilde{\rho}}
\def\P{{\mathbb P}}
\def\R{{\mathbb R}}
\def\bD{{\overline{a}}}
\def\bs{{\overline{\sigma}}}
\def\bP{{\overline{\mathbb P}}}
\def\tP{{\widetilde{\mathbb P}}}
\def\tJ{{{\mathcal J^P}}}
\def\tm{{\widetilde{m}}}
\def\tn{{\widetilde{n}}}
\def\zi{{x_{i+1}-x_i}}
\def\tP{{\widetilde{\mathbb P}}}
\def\tP{{\widetilde{\mathbb P}}}
\def\tri{{i\rightarrow i+1}}
\def\m2i{{i,i+1}} 
\begin{document}

\title[Entropic  Semi-Martingale Optimal Transport]{Entropic  Semi-Martingale Optimal Transport}

\author*[1,2]{\fnm{Jean-David} \sur{Benamou}} \email{Jean-David.Benamou@inria.fr}

\author[2,3]{\fnm{Guillaume} \sur{Chazareix}}\email{Guillaume.Chazareix@inria.fr}

\author[1,4]{\fnm{Marc} \sur{Hoffmann}}\email{hoffmann@ceremade.dauphine.fr}

\author[3]{\fnm{Gr\'egoire} \sur{Loeper}}\email{gregoire.loeper@bnpparibas.com}

\author[5]{\fnm{Fran\c{c}ois-Xavier} \sur{Vialard}}\email{francois-xavier.vialard@univ-eiffel.fr}

\affil[1]{\orgdiv{CEREMADE}, \orgname{Universit\'e Paris Dauphine-PSL} 
}

\affil[2]{\orgdiv{MOKAPLAN}, \orgname{INRIA Paris}
}

\affil[3]{\orgdiv{BNP Paribas Global Markets}
}

\affil[4]{\orgdiv{Institut Universitaire de France}}

\affil[5]{\orgdiv{LIGM, Universit\'e Gustave Eiffel}}

\abstract{ Entropic Optimal Transport (EOT), also known as the Schrödinger problem, aims to find random processes with given initial and final marginals, minimizing the relative entropy (RE) with respect to a reference measure.  Both processes (the reference and the controlled one) necessarily share the same diffusion coefficients to  ensure  finiteness of the RE. This initially suggests that controlled-diffusion Semi-Martingale Optimal Transport (SMOT) problems may be incompatible with entropic regularization. However, when the process is observed at discrete times, forming a Markov chain, the RE remains finite even with variable diffusion coefficients. In this case, discrete semi-martingales can emerge as solutions to multi-marginal EOT problems. For smooth semi-martingales, the scaled limit of the relative entropy of their  time discretizations converges to the ``specific relative entropy'',  a convex functional of the variance. This observation leads to an entropy regularized time discretization of the continuous SMOT problems, enabling the computation of discrete approximations via a multi-marginal Sinkhorn algorithm. We prove convergence of the time-discrete entropic problem to the continuous case, provide an implementation, and present numerical experiments supporting the theoretical results.}

\keywords{Semi-Martingale Optimal Transport, Multi-Marginal Optimal Transport, Entropic Penalisation, Specific Relative Entropy, Sinkhorn Algorithm}

\maketitle

{ \small
\subsection*{Notations}

Time is denoted by $t\in [0,1]$ and space by $x\in \R^d$. 

$(\Omega,{\cal F})$ is the space of continuous paths $C([0,1];\R)$ equipped with its Borel sigma-field ${\cal F}$ for the topology of uniform convergence (equivalently, with the sigma-field of all the coordinate mappings)
 and $\Pc(\Omega)$ is the space of probability measures on $(\Omega, {\cal F})$.

The evaluation map at time $t$ is denoted by
 $e_t : \omega \in \Omega \rightarrow \omega_t $, also called the ``position at time $t$'' map. 
 
 The push forward of a measure $\mu$ by a
mapping $T$ is written $T_\# \, \mu$, and defined by $T_\# \, \mu (B) = \mu (T^{-1}(B)) $ for every Borel set $B$.
For $\P \in \Pc(\Omega)$,  $\P_t  = (e_t)_\# \P $ is the  marginal at time $t$, 
and likewise for $(t_1, t_2 , ...)$ marginals: $\P_{t_1, t_2, ...}  = (e_{t_1,t_2,...})_\# \P $. If the time marginal $\P_t$ si absolutely continuous, its densioty with respect to the Lebesgue measure is denoted by $\rho_t$.

Let $(t_i)_{i=0,N}$ be  a regular time discretisation of $[0,1]$ associated with the time step $h := \frac{1}{N}$. 
We use a subscript $.^h$ when dealing with time discrete quantities. For 
instance  
$\P^h  = (e_{t_0,t_1,...,t_N})_\# \P $.
Given $\P^h$ we use the simplified
marginal notation $\P_i^h = (e_{t_i})_\# \P^h$, $\P_\m2i^h = (e_{t_i,t_{i+1}})_\# \P^h$, etc\ldots

We sometimes abuse notation slightly using the same symbol for a measure and 
its density with respect to the Lebesgue measure, i.e.  $x_i \in \R^d \rightarrow \P^h_i(x_i)$ a function while  $\P^h_i(B) = \int_B d\P^h_i(x_i) = \int_B \P^h_i(x_i) \, dx_{i}$ for any Borel 
$B \in \R^d$. The same holds for multivariate measures. 
More generally the subscript $i$ or $i,i+1$ always indicates that the variable is a measure  or function over 
$\R^d_{t_i}$ or $ \R^d_{t_i} \times \R^d_{t_{i+1}}$. 

Throughout the paper, $|\cdot|$ denotes indifferently an Euclidean norm, absolute value for a scalar, Euclidean norm for a vector, or Frobenius norm for a matrix.  

The indicator or characteristic function of a set is be denoted $\iota$, hence
 $ \iota_{[\lambda, \Lambda]}(x) \cqq  0 \text{ if }  x\in [\lambda, \Lambda] \mbox{ and } +\infty \text{ otherwise},  $
and by extension, for a symmetric matrix $a$,
 $ \iota_{[\lambda, \Lambda]}(a) \cqq 0 \text{ if } \lambda Id \leq a \leq \Lambda Id \mbox{ and }  +\infty \text{ otherwise}. $

A standard notation for the transition probability from $(s,x)$  to $(t,y) $ is $\Pi(s,x,t,y) = \P( X_t = y | X_s= x)$  but when dealing with 
time discrete measures we will also  use the  more compact  notation  and decomposition: 
$\P^h_\tri(x_i, dx_{i+1})      \coloneqq  { \P^h_\m2i( x_i, dx_{i+1})}/{\P^h_i( x_i)} $ 
 (or simply $\P^h_\tri$ when no confusion is possible). Hence  for all $x_i$,  $\P^h_\tri(x_i,\cdot)$ is a  probability on $\Pc(\R^d_{t_{i+1}})$ 
while $\P^h_\m2i$ is a joint probability defined on $ \R^d_{t_i} \times \R^d_{t_{i+1}}$. 

The expectation with respect to $\P$ is denoted by
$\E_{\P}(\cdot)$
or $\E(\cdot)$ if there is no ambiguity. 
Finally, we denote by $(\rho_0,\rho_1) \in \Pc(\R^d) \times \Pc(\R^d)  $ two  probability densities with finite second moments, that will play the role of initial and final distributions for the optimal transport problem.
The relative entropy of a measure  $\P$ with respect to $\bP$ is given as:
$
  \Ec(\P | \bP)  \cqq   \E_{\P}  [ \log{ {d\P}/ {d\bP}}  ]  \mbox{ if }  \P \ll \bP  \mbox{ and }  +\infty \quad \text{otherwise}. $



\tableofcontents

\section{Introduction} 
\label{intro}

The distinction between Entropy-Regularized Optimal Transport (EOT) and Semi-Martingale Optimal Transport (SMOT), further discussed in sections  \ref{s11} and \ref{smot}, hinges on the underlying stochastic processes being optimized. EOT \cite{PeyreB,LeonardS,GalichonB,Benamou2021}  is a well-established method in computational optimal transport, equivalent to relative entropy (RE) minimization with respect to a given reference diffusion, typically the Wiener measure. To ensure the finiteness of the RE, both the reference and controlled processes must have the same diffusion coefficients. This characteristic makes EOT particularly suitable for applications where controlling the drift is adequate and computational efficiency is paramount, such as in machine learning, statistics, and physics.

In contrast, SMOT \cite{TT13}  introduces greater complexity by allowing control over both the drift and the diffusion coefficients. This expanded control framework enables one to model more intricate dynamics that cannot be adequately captured by a drift control alone, with model calibration in finance serving as a prime example.

When discretizing continuous diffusions processes in time and formulating them as Markov chains, the RE may remain finite even for different diffusion coefficients that imply orthogonal laws on the the space of continuous trajectories. Notably, as the time discretization step approaches zero, the asymptotic behavior of the discrete RE can be analyzed. Properly scaled by the time step, the first-order term converges to a well-defined limit known in the literature as the specific relative entropy (SRE), which represents a divergence between the respective diffusion coefficients.  This concept was apparently initially  proposed  in \cite{Gantert91}.

In section  \ref{smotprimal} , we use  SRE as a regularization term for the time-continuous SMOT problem 
It can also be interpreted as a fidelity term pertaining to the diffusion coefficient of the optimal associated semimartingale. The natural time discretization 
optimizes over Markov chains, and the SRE becomes a ``correctly'' scaled discrete RE regularization. In this framework, arbitrary time discretizations of semimartingales can be realized as solutions to multi-marginal EOT problems. Our first contribution, Theorem \ref{maintheo}, demonstrates that the sequence of Markov chains indexed by the time step converges weakly (similar to Gamma-convergence) to a minimizer of the time-continuous ESMOT problem (V0). The convergence proof is sketched in section \ref{mainproof}, with detailed elaboration provided in Appendix \ref{annex}.

The numerical solution relies on the dual formulation of the problem, as presented in equations (\ref{DKMMOT}-\ref{DELTA1}-\ref{FD1}). We formally address the duality in section \ref{frd}, where we apply Fenchel-Rockafellar convex duality, common in optimal transport, by using a linear change of variables involving moments rather than conditional moments. The integrand is interpreted as a convex lower semicontinuous perspective function  (\ref{convex}). Our implementation is based on the Sinkhorn algorithm, which is detailed in section  \ref{secsink}. While the convergence analysis of Sinkhorn in both continuous and discrete cases remains to be thoroughly examined, we present the implementation and several test case experiments in section \ref{numerics}.

Our novel regularization approach illustrates that EOT can be extended to more general diffusion processes, bridging the classical EOT framework and broader SMOT problems, thus giving rise to the term Entropic Semi-Martingale Optimal Transport (ESMOT). The significance of this new approach is not solely theoretical. Traditional methods often involve maximizing the dual problem through gradient ascent or primal-dual techniques, which require solving a fully nonlinear Hamilton-Jacobi-Bellman equation at each iteration  \cite{BBA} and \cite{PapaA,LoeperA}. In contrast, our approach using the Multi-Marginal Sinkhorn algorithm, which builds on  \cite{BenamouE} and \cite{BenamouMFG2}, is straightforward to implement and comes with convergence guarantees. This method has been successfully applied to model calibration using derivative price observations  \cite{FinancePaper}, yielding smoother volatility surfaces compared to alternative parametric or PDE-based methods.  Potential applications are probably not limited to finance: for instance, our approach could improve on the recent OT modeling for genes' developmental trajectories of cells \cite{Sbgr}.

There are very few related works in the literature,   \cite{Avellaneda}  highlights the challenges of using relative entropy between singular measures and discusses a discrete version of SRE. However, this work remains entirely at the time discrete level and does not connect with EOT. Additionally, the characterization of specific relative entropy for smooth Itô diffusions can also be found in \cite{backhoff23} and \cite{backhoff2024}, where the authors apply time discretization and scaling techniques, substituting relative entropy with $p$-Wasserstein distance and investigating the properties of this new divergence between martingales.

\section{Background}
\label{Rintro}

\subsection{Entropic Optimal Transport (EOT)}
\label{s11} 
EOT is a well-studied problem.  Given an initial distribution $\rho_0$ on $\mathbb R^d$ and $\bs^2 >0$, call $\bar\P$ the distribution of a Brownian motion with variance $\bs^2 $ and initial random condition distributed according to $\rho_0$. Call 
$\bar\P_{0,1}$ its joint (initial, final) ditribution at times $0$ and $1$. 
The EOT problem can be formulated in a {\it static form} as
\begin{equation} \label{2M}
\inf_{\P_{0,1}\in \Pc(\R^d\times\R^d),  \, \P_0=\rho_0,  \, \P_1=\rho_1}  \bs^2 \,  \Ec(\P_{0,1}  | \bar\P_{0,1}).
\end{equation} 

The  equivalent time continuous {\em dynamic  form} version of (\ref{2M}), known as the Schr\"odinger problem, is
\begin{equation} \label{SB}
  \inf_{  \P \in \Pc  ,   \, \P_{0} = \rho_0,  \, \P_{1} = \rho_1 }   \bs^2 \,  
  \Ec (\P | \bP).
\end{equation} 
The equivalence is easily seen from the additive property of the relative entropy  
applied to the disintegration of the measures $\P$ and $\bP$ 
\[
 \Ec (\P | \bP) =   \Ec (\P_{0,1} | \bP_{0,1} ) +  \E_{ \P_{0,1}} \left( \Ec ( \P(. | X_0 , X_1)  | \bP(. | X_0 , X_1)) \right). 
\]
In particular, the optimal $\P$ and $\bP$ necessarily share the sam point-to-point Brownian bridges.  It is well-known 
(see \cite{LeonardS} for a  survey) that  the minimizers of the Shr\"odinger problem are semi-martingales  with {\em fixed} volatility $\bs$ and 
local drift converging to the OT map as $\bs \to 0$.   There is a large literature  based 
on this property as, unlike the classical OT problem,  the EOT is a strictly convex optimization problem and 
can be solved efficiently through the celebrated Sinkhorn algorithm (see \cite{GalichonB,PeyreB}).    
The Entropic regularisation and Sinkhorn solving approaches are not limited to the static two-marginals problem (\ref{2M}). They can be applied to 
multimarginal measures. Let  $\P$ be the law of a continuous semi-martingale, and $\P^h =(e_{t_0,t_1,...,t_N})_\# \P  $   its time discretization  (and using a similar notation for $\bP$).  
Remark that even tough  $\Ec (\P^h | \bP^h)$ is finite,  a  discete time multi-marginal entropic    $ \bs^2 \, \Ec (\P^h | \bP^h)$ regularization has the strong consequence that in the limit $h\searrow 0$,  $\P^h$ must have  asymptotically the same volatility $\bs$ otherwise the continuous time entropy  $ \Ec (\P | \bP) $ is infinite.
It was used to optimize over constant prescribed diffusion model in  \cite{BenamouMFG2,PeyreGF} in particular. 

\subsection{Semi Martingale OT (SMOT)} 
\label{smot} 
SMOT allows for local in time and space diffusion coefficients.  Our goal is to extend the Entropic regularisation approach in this setting.

We closely follow the framework proposed by \cite{TT13} and also in \cite{GLpathdep}.
On the path space $\Omega = C^0([0,1],\R^d)$, we consider the canonical process $X_t(\omega) = \omega(t) $ and its associated canonical filtration, $(\Omega,{\cal F}, ({{\cal F}_t})_{t \in [0,1]})$.
Let ${\cal P}^0$ be the set of  probability measures $\P$ adapted to ${\cal{F}}$, such that $X$ is an $({\cal F},\P)$-semimartingale on $[0,1]$.
Thus, $X_t$ can be decomposed as
\[
X_t=X_0+B^\P_t+M^\P_t,\quad \langle X\rangle_t=\langle M^\P\rangle_t=A^\P_t,
\]
where $M^\P$ is 
an $({\cal F},\P)$-martingale on $[0,1]$. In addition, $(B^\P,A^\P)$ is ${\cal F}$-adapted and $\P$-$as$ absolutely continuous with respect to time. In particular, $\P$ is said to have characteristics $(a^\P,b^\P)$, which are defined in the following way,
\[
a^\P_t=\frac{d A^\P_t}{dt},\quad b^\P_t=\frac{d B^\P_t}{dt}\, \cdot
\]
Note that $(a^\P,b^\P)$ is ${\cal F}$-adapted and determined $d\P \otimes dt$ almost everywhere. We might use $\D,\V$ instead of $\D^\P,\V^\P$ for simplicity, but it is fundamental to note that the object we are optimising is the measure $\P$, from which $a, b$ are derived.
Letting $\S^d$ be the set of symmetric matrices and $\S^d_+$ the set of positive semidefinite matrices, $(a^\P,b^\P)$ takes values in the space $\S^d_+ \times \R^{d}$.  For $a_1,a_2 \in \S_+^d$, we define $a_1:a_2 := \text{trace}(a_1^\intercal a_2)$.  

Denote by ${\cal P}^1\subset {\cal P}^0$ the set of probability measures $\P$ whose characteristics $(a^\P,b^\P)$ are $\P \otimes dt$-integrable: 
\[
    \E\bigg(\int_0^1 |a^\P| +|b^\P|\, dt\bigg)< \infty.
\]
Recall that $\P_t=X_{t\,\#}\P$, denotes the marginal of $\P$ at time $t$.

The SMOT stochastic control problem
(see  \cite{TT13,GLS,GLW22,GLpathdep}  ) takes the form 
\begin{equation} \label{SDK}
  \inf_{ {\P \in \Pc^1 }} 
\E_{\P} \left( \int_0^1    F(t,X_t,b^\P_t,a^\P_t) \, dt \right)  + \Dist(\P_0,\rho_0) + \Dist(\P_1,\rho_1),
\end{equation}
where  $F$ has to be convex in $(a,b)$ at every $(t,x)$ and also enforces the non-negativity of $a$ the diffusion coefficient,  $\Dist(\P_t,\rho)$ is convex in $\P_t$, for a given $\rho$. Typically $\Dist$  will be a penalty function to enforce that $\P_t$ is either equal or close to $\rho$ at initial and final times. We will use hard constraints as $\iota_{\rho_t}(\P_t)$, as in the usual optimal transport problem in the numerical section. Still, the convergence theorem relies on a soft constraint for instance $\Wc_2(\P_t,\rho_t)$ the quadratic Wasserstein distance. We will assume that  there exists  a positive constant $C$ such that for every pair $\rho_1,\rho_2$ (denoting $\Wc_2(\rho_0,\rho_1)$ the classical quadratic  Wasserstein distance)
\\
\sethypotesistag{H1}
\begin{hypothesis}[H1]\label{H-1}
We assume the following hypotheses on $\Dist$
\begin{enumerate}
    \item $\Dist$ is strictly convex on its domain,
    \item $\forall \rho_1,\rho_2 \in \mathcal{P}(\R^d), \,\text{ such that } \Dist(\rho_1,\rho_2)<+\infty \text{, then }  \int |x|^4 d\rho_1(x) < \infty \,$ ,
    \item $\forall \rho_2 \in \mathcal{P}(\R^d)$, there exists $C(\rho_2) >0$ such that 
   $ |\Dist(\rho_1,\rho_2) - \Dist(\rho_3,\rho_2)|\leq C\Wc_2(\rho_1,\rho_3)\,,$ when all the quantities are finite.
\end{enumerate}
\end{hypothesis}
As a consequence, if $\eta$ is a space convolution kernel with variance $v_\eta$, then 
$
|\Dist(\eta\star\rho_1,\rho_2) - \Dist(\eta\star\rho_1,\rho_2)| \leq C {v_\eta}
$ when $\Dist(\eta\star\rho_1,\rho_2) < +\infty$.
\begin{example}
These constraints are satisfied by the family of discrepancies indexed by $M>0$,
\begin{equation}   \Dist_M(\rho_1,\rho_2) = \iota_{C(M)}(\rho_1) +  \Wc_2(\rho_1,\rho_2)\,,
\end{equation}
where $\iota_{C(M)}$ is the convex indicator function of the convex set $C(M) \coloneqq \{ \rho \in \mathcal{P}(\R^d)\,,\, \int |x|^4 d\rho(x) < M\} $.
\end{example}

\begin{remark}[] 
\begin{itemize} 
\item Note that the discrepancy $\Dist$ can be nonsymmetric. In the first argument, we will consider the marginals of the probability measure on the path space on which the optimisation is done. In the second argument, we consider the data which can be any probability measure.
\item Assuming $\rho_0$ and $\rho_1$ are in convex order, 
time-dependent martingale constrained processes (see for example \cite{HT19}) correspond to \eqref{SDK} with 
$ F= F(a) \text{ if } b \equiv 0, +\infty \text{ otherwise.} $ 
In particular, the Bass Martingale Problem (see \cite{backhoffM} for example)  corresponds  to the choice
 $F=|a-Id|^2$.
\item  The  Schr\"odinger bridge problem (\ref{SB}) corresponds to 
$ F= |b|^2 \text{ if } a \equiv \varepsilon, +\infty \text{ otherwise}, $
for a positive constant $\varepsilon$.
\end{itemize} 
\end{remark}



Usual coercivity assumptions on $F$ are
\begin{align}
F \geq C(1+|a|^p + |b|^p) \text{ for } C>0, p>1.
\end{align}
We will restrict to a strongly coercive case:
\begin{align}
\label{H1} \tag{H2}
F=G+\iota_{[\lambda, \Lambda]}(a)+\iota_{[0,B]}(|b|)
\end{align}
with $\lambda,\Lambda, B$ given positive parameters, and $G$ satisfying 
\begin{align}\label{H2}  \tag{H3} 
\mbox{ $G$ is strictly convex and Lipschitz on $[0,B] \times [\lambda,\Lambda]$.}
\end{align}
The bounds in \eqref{H1} are technical assumptions that allow us to later handle 
the convergence of the discrete entropy to the specific entropy. \\

As explained in \cite{GLW22}, when $F$ is a function of $(t,X,b,a)$ the optimal $a$ and $b$ will automatically be functions of $(t,X)$, even if one does not restrict such local processes in the minimisation procedure. This is due to the convexity of  $F$ which favors local functions.
We state here a general and standard existence result for \eqref{SDK}, that can be found in \cite{GLpathdep}:
\begin{theorem} 
\label{th1}
{ Under conditions (\ref{H-1}, \ref{H1}, \ref{H2}), if there exists an admissible solution (i.e. a probability $\P$  with finite energy in \eqref{SDK})    there exists a minimiser to \eqref{SDK}.}
\end{theorem}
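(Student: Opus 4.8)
The plan is to apply the direct method of the calculus of variations, following the now-standard SMOT machinery of \cite{TT13} (extended to the path-dependent setting in \cite{GLpathdep}). Assume an admissible $\P$ exists, so the infimum in \eqref{SDK} is finite, and fix a minimizing sequence $(\P^n)\subset\Pc^1$ whose energy converges to it; in particular $\E_{\P^n}\bigl(\int_0^1 F\,dt\bigr)+\Dist(\P^n_0,\rho_0)+\Dist(\P^n_1,\rho_1)\le E^\star<\infty$ for all $n$. The first consequence of the strong coercivity \eqref{H1} is that the two indicator terms in $F$ force $\lambda\,Id\le a^{\P^n}_t\le\Lambda\,Id$ and $|b^{\P^n}_t|\le B$ for $\P^n\otimes dt$-almost every $(t,\omega)$; hence every $\P^n$ lies in $\Pc^1$ with \emph{uniformly} bounded characteristics. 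Moreover, by item (2) of \eqref{H-1} the finiteness of $\Dist(\P^n_0,\rho_0)$ and $\Dist(\P^n_1,\rho_1)$, together with these bounds, yields a uniform fourth-moment bound on the time-marginals of the $\P^n$, in particular tightness of $(\P^n_0)$ and $(\P^n_1)$.

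Second, I would promote this to tightness of $(\P^n)$ on $\Omega=C([0,1];\R^d)$. Decomposing $X_t=X_0+\int_0^t b^{\P^n}_s\,ds+M^{\P^n}_t$, the drift part is $B$-Lipschitz in $t$ uniformly in $n$, while the martingale part satisfies $\langle M^{\P^n}\rangle_t-\langle M^{\P^n}\rangle_s\le\Lambda(t-s)$, so by Burkholder--Davis--Gundy $\E_{\P^n}|M^{\P^n}_t-M^{\P^n}_s|^4\le C(\Lambda)|t-s|^2$. The Kolmogorov--Chentsov criterion, combined with tightness of the initial marginals, gives tightness of $(\P^n)$; pass to a subsequence with $\P^n\rightharpoonup\P^\star$ weakly in $\Pc(\Omega)$. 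One must then check that $\P^\star\in\Pc^1$ is again admissible, i.e. the weak closedness of the set $\{\P:\ X\text{ is a }\P\text{-semimartingale with }\lambda Id\le a^\P\le\Lambda Id,\ |b^\P|\le B\}$: working with the local martingale problem, $\phi(X_t)-\phi(X_0)-\int_0^t\bigl(\nabla\phi(X_s)\cdot b^{\P^n}_s+\tfrac12 D^2\phi(X_s):a^{\P^n}_s\bigr)\,ds$ is a $\P^n$-martingale for every $\phi\in C^2_b(\R^d)$, and a stability argument on these martingale problems — identifying the limiting characteristics of $\P^\star$ by a weak-compactness argument on the uniformly bounded family $\{(a^{\P^n},b^{\P^n})\}$ — shows $\P^\star$ carries a semimartingale whose characteristics still obey the bounds of \eqref{H1}; being bounded, they are integrable, so $\P^\star\in\Pc^1$.

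The crux — and the step I expect to be the main obstacle — is the weak lower semicontinuity of the running cost $\Lc(\P):=\E_{\P}\int_0^1 F(t,X_t,b^{\P}_t,a^{\P}_t)\,dt$, because the characteristics $(a^\P,b^\P)$ are not continuous functionals of $\P$. Here the convexity of $F$ in $(a,b)$ is essential: writing $F=F^{\ast\ast}$, one represents $\Lc$ as a supremum of functionals that \emph{are} weakly continuous. Concretely, whenever $\phi\in C^{1,2}_b([0,1]\times\R^d)$ is a subsolution of the Hamilton--Jacobi--Bellman inequality $\partial_t\phi+F^\ast\bigl(t,x,\nabla\phi,\tfrac12 D^2\phi\bigr)\le 0$, Itô's formula and the Fenchel inequality give $\Lc(\P)\ge\E_{\P_1}[\phi(1,\cdot)]-\E_{\P_0}[\phi(0,\cdot)]$, and the SMOT duality of \cite{TT13} shows $\Lc(\P)$ equals the supremum of these right-hand sides over admissible $\phi$; since each $\P\mapsto\E_{\P_1}[\phi(1,\cdot)]-\E_{\P_0}[\phi(0,\cdot)]$ is weakly continuous (the test functions being bounded and continuous), $\Lc$ is weakly lower semicontinuous, hence $\Lc(\P^\star)\le\liminf_n\Lc(\P^n)$.

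Finally, the convexity in item (1) of \eqref{H-1}, together with the weak lower semicontinuity of $\Wc_2$ and of fourth-moment functionals, makes $\rho\mapsto\Dist(\rho,\rho_0)$ and $\rho\mapsto\Dist(\rho,\rho_1)$ weakly lower semicontinuous as well. Adding the three lower-semicontinuous pieces, the energy of $\P^\star$ is at most $\liminf_n$ of the energy of $\P^n$, which equals the infimum in \eqref{SDK}; hence $\P^\star$ is a minimizer, proving the theorem. A minor technical point is to make the fourth-moment bounds in \eqref{H-1} genuinely uniform along the minimizing sequence; for the discrepancies $\Dist_M$ of the Example this is immediate from the indicator $\iota_{C(M)}$.
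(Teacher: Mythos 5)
Your overall strategy (direct method: uniform bounds on the characteristics forced by the indicator terms in \eqref{H1}, tightness via moment estimates and Kolmogorov's criterion, stability of the martingale problem, lsc of the energy) is exactly the ``standard argument'' the paper invokes. However, the crux step as you wrote it fails. You claim that the SMOT duality of \cite{TT13} shows $\Lc(\P):=\E_{\P}\int_0^1 F(t,X_t,b^\P_t,a^\P_t)\,dt$ \emph{equals} $\sup_\phi\{\E_{\P_1}[\phi(1,\cdot)]-\E_{\P_0}[\phi(0,\cdot)]\}$ over HJB subsolutions. This cannot be true: the right-hand side depends on $\P$ only through its two endpoint marginals, while $\Lc$ does not (two laws with the same marginals but different running costs would give a contradiction). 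What the duality actually identifies this supremum with is $\inf\{\Lc(\Q):\Q_0=\P_0,\ \Q_1=\P_1\}$, i.e.\ a \emph{lower bound} for $\Lc(\P)$, with equality only when $\P$ is optimal for its own marginals. With only the inequality, your chain of estimates yields $\liminf_n\Lc(\P^n)\ \ge$ (dual value at the limiting marginals), which does not dominate $\Lc(\P^\star)$ unless $\P^\star$ is already known to be optimal given its marginals --- essentially what is to be proved. So the weak lower semicontinuity of $\Lc$, the heart of the proof, is not established by your argument.

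The standard repair --- and the one the paper's own machinery points to (the functional $\J(\rho,m,n)$ of Section \ref{mainproof} and Proposition \ref{pE}, i.e.\ Lemmas 9.4.3--9.4.4 of \cite{AGS}) --- is to \emph{linearize}: associate with each $\P^n$ the space--time measures $\mu^n=\P^n_t\,dt$, $m^n=b^{\P^n}\,\P^n_t\,dt$, $\theta^n=a^{\P^n}\,\P^n_t\,dt$, so that $\Lc(\P^n)=\J(\mu^n,m^n,\theta^n)$ with a jointly convex, weakly lower semicontinuous perspective integrand; the uniform bounds $|b^{\P^n}|\le B$, $a^{\P^n}\le\Lambda\, Id$ give weak compactness of $(m^n,\theta^n)$, and the martingale-problem stability you already invoke (plus a conditioning/Jensen step to identify the limit densities with versions of the characteristics of $\P^\star$, where convexity of $G$ only helps) yields $\Lc(\P^\star)\le\liminf_n\Lc(\P^n)$; this is the route of \cite{TT13} and \cite{GLpathdep}. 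A secondary point you flagged yourself: item (2) of (\ref{H-1}) gives finiteness of fourth moments measure by measure, not a bound uniform along the minimizing sequence, so tightness of $(\P^n_0)$ is not literally a consequence of (\ref{H-1}); it is immediate for the Example's $\Dist_M$, but in general it needs either an additional coercivity assumption on $\Dist$ or a separate argument.
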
 
This result follows from standard arguments on the compactness of any minimizing sequence, and the lower semi-continuity of the energy functional.

\subsection{Time discretisation and specific relative entropy} 
\label{dsre} 

The concept of  specific relative entropy   between diffusion processes seems to go back to \cite{Gantert91}, see also \cite{follmer22} and \cite{backhoff23}.  

Let us consider   $\P \in \Pc^1$ and the reference measure $\bP$ as section \ref{s11}. We denote by $\P^h =(e_{t_1,t_2,...})_\# \P$ 
and $\bP^h =(e_{t_1,t_2,...})_\# \bP$ their time discretisation with 
time mesh $h$ (see the notation section). Then under regularity conditions on the drift and diffusion coefficients specified in 
Proposition \ref{p1} below,  the specific relative entropy between  $\P$ and $\bP$  is  defined as the limit: 
\[
  \S^{\Ec}(\P | \bP)  := \lim_{h\searrow 0 } h\, \Ec(\P^h | \bP^h)  \,.
  \]
Under additional smoothness and boundedness  assumptions on the characteristics 
$\V^\P, \D^\P$, the specific entropy can be characterized  (see proposition \ref{p1}) explicitly: 
  \begin{equation} \label{SRE}
  \S^{\Ec}(\P | \bP) = \lim_{h\searrow 0 } h\, \Ec(\P^h | \bP^h) =
    \tfrac{1}{2}  \E_{\P} \left( \int_0^1  \S^{\Ic}(\D_t(X_t) | \bD))   \, dt\right) .
  \end{equation}
 The integrand  $\S^{\Ic}$, is given (in dimension 1) by 
  \begin{equation} \label{SI}
    \S^{\Ic}( a | \bD)   \coloneqq \frac{a}{\bD} - 1-   \log{\frac{a}{\bD}}.
  \end{equation} 
  In the case of a positive definite symmetric matrix in dimension $d$, the definition becomes
  $$ \S^{\Ic}( a | \bD)   \coloneqq \mathrm{Tr}(\bar a^{-1}(a-\bar a))-   \log{\frac{\mathrm{det} (a)}{\mathrm{det} \bar a}}.$$
 This function is strictly convex with minimum at $\bD$,  a barrier for vanishing $a$ and strictly increasing but just sub-linearly as $ a \rightarrow +\infty$. 
 
Thanks to the discretisation and renormalisation by $h$, we will see that the limit \eqref{SRE} is well defined even when the diffusions matrices $a$ and $\bar a$ differ, in which case the probability measures $\P$ and $\bP$ are singular as probability measures on the space of continuous functions, which entails that the 
relative entropy blows up.  This is easily understood in  the following 
simplified setting: instead of discretizing a time continuous process, directly assume that  $\P^h$ and $\bP^h$ are 
a Markov chains  with  transisitions  following normal distribution with space dependent   
$(\mu_i^h, v_i^h)$ coefficients such that: 
\[
\P^h_\tri  = \Nc( x_i +  h\, \mu_i^h(x_i),    h\, v^h_i(x_i)) \]
and likewise  $\bP^h_\tri  = \Nc( x_i,  h\,  \bD)$. A direct computation 
expresses  the discrete relative entropy as 
 \begin{equation} \label{RE1}
h\,     \Ec(\P^h | \bP^h) =   h\, \Ec(\P^h_0 | \rho_0 ) + \dfrac{h}{2}  \sum_{i=0}^{N-1} \E_{\P^h_i}( 
    \S^{\Ic} (v_i^h(X_i^h) | \bD)  +  \dfrac{h}{ \bD} \,
    \|\mu_i^h(X_i^h) \|^2 )\,,
  \end{equation}
  where $(X_i^h)_{i=0..N}$ is the canonical discrete time process associated to  $\P^h$. 
Assuming convergence as $h\searrow 0 $ of the piecewise constant in time interpolation of the coefficients and 
{\em without the $h$ scaling}  in (\ref{RE1})  the zero-order term in (\ref{RE1})  blows up consistently with the definition of relative entropy between singular continuous-time diffusion processes. 
{\em With the scaling}, we recover Definition \ref{SRE}.   

Considering now  an arbitrary Markov chain  $\P^h$ we define  
  \begin{equation} \label{SMOM}
 \left\{  \begin{array}{l} 
  \V^h_i(x_i)  \cqq \dfrac{1}{h} \mathbb E_{\P^h_\tri} \left[ X^h_{i+1}-x_i \right]  ,  \\[10pt]
 \D^h_i(x_i)    \cqq \dfrac{1}{h} \mathbb E_{\P^h_\tri}   \left[ (X^h_{i+1}-x_i) (X^h_{i+1}-x_i)^\star   \right] . 
 \end{array} 
 \right.
\end{equation}    
{Remark that compared to the previous Gaussian example $b=\mu$ but $a =  v + h b^2$. This choice of characteristics is only equivalent when $h\to 0$, but has the fundamental property that $\P_i^ha_i^h, \P_i^h b_i^h$ are linear quantities with respect to $\P^h$.}
Using this definition we can 
show in particular that the discrete relative entropy controls the discrete 
version of the specific relative entropy, we show (Proposition \ref{p1} ii)) that 
\begin{equation} \label{dualSpec0}
 h\,  \Ec(\P^h | \bP^h) \ge   h\, \Ec(\P^h_0 | \rho_0 ) + \frac{h}{2}  \sum_{i=0}^{N-1} \E_{\P^h_i}( 
    \S^{\Ic}(\D_i^h(X_i^h) | \bD)  \, .
\end{equation}
We gather these results in: 
 \begin{proposition}[Specific Relative Entropy] 
  \label{p1}
  \begin{itemize}
      \item[i)] Let $\P \in \Pc_1$, see (\ref{SDK}), 
      assume that both functions $(t,x) \mapsto \V_t(x)$ and $(t,x) \mapsto \D_t(x)$ are twice continuously differentiable, and that the matrix $\D_t(x)$ is invertible for every $(t,x) \in [0,1] \times \mathbb R^d$. Assume further that $(\V,\D)$ are bounded above and $\D$  below by a positive constant (in the sense of strong ellipticity for $a$). 
      Then   (\ref{SRE}) holds.  
      \item[ii)]  Let $\P^h$ be the law of a discrete-time Markov chain and $(\V_i^h,\D_i^h)$ given by (\ref{SMOM}). 
      We assume further that $ d\P^h = \rho. d\bP^h$ 
      with a continous density $\rho$, then (\ref{dualSpec0}) holds. 
      \end{itemize}
\end{proposition}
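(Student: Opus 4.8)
\emph{Proof idea.} The plan is to reduce both items to a one--step estimate on the transition kernels, through the additivity of relative entropy along the Markov disintegration, and then to argue differently in the two cases. In both, one starts from the chain rule
\[ \Ec(\P^h\,|\,\bP^h)=\Ec(\P^h_0\,|\,\rho_0)+\sum_{i=0}^{N-1}\E_{\P^h_i}\big[\Ec\big(\P^h_\tri(X^h_i,\cdot)\,\big|\,\Nc(X^h_i,h\bD)\big)\big], \]
which is valid because $\bP^h$ is the time discretisation of the reference of Section~\ref{s11}, so $\bP^h_0=\rho_0$ and $\bP^h_\tri(x_i,\cdot)=\Nc(x_i,h\bD)$; in ii) the assumption $d\P^h=\rho\, d\bP^h$ with $\rho$ continuous makes every conditional law $\P^h_\tri(x_i,\cdot)$ absolutely continuous with a density depending nicely on $x_i$, so the decomposition and the differential--entropy manipulations below are legitimate. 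After multiplying by $h$, the zero--order term is the $h\,\Ec(\P^h_0\,|\,\rho_0)$ appearing in \eqref{SRE}--\eqref{dualSpec0} (it vanishes in i), where $\P_0=\rho_0$, and in any case tends to $0$), so the whole matter is the sum of the one--step terms.

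\emph{Part i).} Fix $i$ and $x$. Since $(t,x)\mapsto(\V_t(x),\D_t(x))$ is twice continuously differentiable, bounded, with $\D$ uniformly elliptic, a short--time parametrix expansion of the transition density of the diffusion on $[t_i,t_{i+1}]$ (equivalently, an It\^o--Taylor expansion of $X_{t_{i+1}}-x$ given $X_{t_i}=x$ together with Aronson-type Gaussian bounds) shows that $\P^h_\tri(x,\cdot)$ agrees with $\Nc(x+h\,\V_{t_i}(x),\,h\,\D_{t_i}(x))$ up to a relative error $O(\sqrt h)$ uniform in $x$. By the closed form of the relative entropy between two Gaussians and the definition \eqref{SI} of $\S^{\Ic}$, the one--step term is thus $\tfrac12\,\S^{\Ic}(\D_{t_i}(x)\,|\,\bD)+\tfrac h2\,\V_{t_i}(x)^\top\bD^{-1}\V_{t_i}(x)+O(\sqrt h)$, uniformly in $x$; inserting this in the chain rule and scaling by $h$,
\[ h\,\Ec(\P^h\,|\,\bP^h)=h\,\Ec(\P^h_0\,|\,\rho_0)+\tfrac h2\sum_{i=0}^{N-1}\E_{\P_{t_i}}\big[\S^{\Ic}(\D_{t_i}(X_{t_i})\,|\,\bD)\big]+O(\sqrt h), \]
the remainder absorbing both the cumulative Gaussian-approximation error and the drift term $\tfrac{h^2}{2}\sum_i\E_{\P_{t_i}}[\V_{t_i}(X_{t_i})^\top\bD^{-1}\V_{t_i}(X_{t_i})]=O(h)$. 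Since $\D$ is continuous with $\lambda\,\mathrm{Id}\le\D\le\Lambda\,\mathrm{Id}$, along each path $t\mapsto\S^{\Ic}(\D_t(X_t)\,|\,\bD)$ is continuous and uniformly bounded, so the sum is a Riemann sum converging pathwise to $\tfrac12\int_0^1\S^{\Ic}(\D_t(X_t)\,|\,\bD)\,dt$; dominated convergence then gives \eqref{SRE}.

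\emph{Part ii).} Fix $i$ and $x_i$, set $\mu:=\P^h_\tri(x_i,\cdot)$, $\nu:=\Nc(x_i,h\bD)$, and let $\Sigma:=\E_\mu[(Y-x_i)(Y-x_i)^\top]=h\,\D^h_i(x_i)$ be the second moment of $\mu$ about $x_i$ (see \eqref{SMOM}). Write $\Ec(\mu\,|\,\nu)$ as minus the differential entropy of $\mu$ plus $\E_\mu$ of the negative log--density of $\nu$: the second piece is an affine function of $\Sigma$ alone, and the first is bounded above by the differential entropy of the Gaussian with mean $m_\mu:=\E_\mu[Y]$ and covariance $\Sigma-(m_\mu-x_i)(m_\mu-x_i)^\top$ (maximum--entropy principle), hence --- this covariance being $\preceq\Sigma$, so of determinant $\le\det\Sigma$ --- by that of the centred Gaussian with covariance $\Sigma$. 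A direct computation then produces the \emph{exact} one--step inequality
\[ \Ec(\P^h_\tri(x_i,\cdot)\,|\,\Nc(x_i,h\bD))\ \ge\ \tfrac12\,\S^{\Ic}(\D^h_i(x_i)\,|\,\bD), \]
which remains true if $\Sigma=+\infty$, both sides then being $+\infty$ (the left one by the Donsker--Varadhan inequality applied to the Gaussian $\nu$). Integrating this inequality against $\P^h_i$, summing over $i$, and combining with the chain rule, we obtain $\Ec(\P^h\,|\,\bP^h)\ge\Ec(\P^h_0\,|\,\rho_0)+\tfrac12\sum_{i=0}^{N-1}\E_{\P^h_i}[\S^{\Ic}(\D^h_i(X^h_i)\,|\,\bD)]$; multiplying by $h$ gives \eqref{dualSpec0}.

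\emph{Main difficulty.} The genuinely delicate step is the uniform short--time expansion in i): establishing $\Ec(\P^h_\tri(x,\cdot)\,|\,\Nc(x,h\bD))=\tfrac12\S^{\Ic}(\D_{t_i}(x)\,|\,\bD)+\tfrac h2\V_{t_i}(x)^\top\bD^{-1}\V_{t_i}(x)+O(\sqrt h)$ \emph{uniformly} in $x$ requires controlling both the conditional second moment and the differential entropy of the transition kernel via Gaussian (Aronson) bounds and first--order parametrix corrections, and it is precisely here that the $C^2$, boundedness, and uniform ellipticity hypotheses on $(\V,\D)$ are used. Part ii) is comparatively soft: once the disintegration is justified, the only thing to verify is the finiteness of the differential entropy, which follows from $\Ec(\P^h\,|\,\bP^h)<\infty$ together with the continuity of $\rho$.
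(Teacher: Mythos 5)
For part i) your overall plan coincides with the paper's (frozen-coefficient Gaussian proxy for each transition, explicit entropy computation per step, Riemann-sum passage to \eqref{SRE}), but the pivotal claim you rely on is overstated and, as written, false: the transition kernel $\P^h_\tri(x,\cdot)$ does \emph{not} agree with $\Nc(x+h\V_{t_i}(x),\,h\D_{t_i}(x))$ up to a relative error $O(\sqrt h)$ uniformly. The parametrix (Azencott-type) expansion gives a multiplicative error $1+O(h)$ only near the diagonal, for $\|y-x\|\le\tau$ and on compacts, while in the tails the ratio of the true density to the frozen-coefficient Gaussian can be exponentially large or small in $\|y-x\|^2/h$ when $a$ varies in space, since the true off-diagonal decay is governed by the Riemannian distance associated with $a^{-1}$ rather than by the frozen quadratic form. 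Hence one cannot pass from ``densities close in relative error'' to the one-step entropy expansion directly. This is precisely why the paper's proof splits the log-likelihood ratio into a near-diagonal part (where the parametrix applies) and a tail part, controls the tail contribution to the expected log-ratio by Aronson's two-sided Gaussian bounds combined with moment estimates (yielding the $O(h^{1/4})$ per-step error), and then identifies the quadratic form with $\tfrac12\mathrm{Tr}\big(\bar a^{-1}(a-\bar a)\big)$ through an It\^o expansion of the increment, controlling the drift and $\sigma$-increment remainders in $L^1$, instead of invoking a Gaussian--Gaussian entropy formula for the true kernel. Your ``main difficulty'' paragraph names the right tools, but the sketch rests on the uniform relative-error statement; to repair it you must replace that statement by the near/far splitting (or an equivalent estimate of the contribution of $\{\|y-x\|\ge\eta\}$ to the Kullback--Leibler integrand) and justify the conditional mean/covariance expansions separately, which is exactly what Steps 2--3 of the paper do. The remaining ingredients (chain rule along the Markov disintegration, vanishing of the scaled initial term, Riemann approximation) are fine and match the paper.

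Part ii) is correct, and your route is the ``Gaussian maximizes entropy under a second-moment constraint'' argument that the paper mentions but does not follow: the paper proves the same one-step inequality via the dual (Donsker--Varadhan type) formulation of the entropy, restricting to truncated quadratic test functions $C+A(y-x)^2$, justifying the truncation by monotone convergence, and optimizing over $(C,A)$. Your decomposition into the cross-entropy term, which is affine in the non-centred second moment $h\,\D^h_i(x_i)$ of \eqref{SMOM}, plus the differential entropy bounded by that of the Gaussian with the same covariance, yields exactly the bound $\tfrac12\,\S^{\Ic}(\D^h_i(x_i)\,|\,\bD)$ needed for \eqref{dualSpec0}. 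The two proofs are interchangeable: yours is more elementary, while the paper's dual argument sidesteps any discussion of whether the differential entropy of the conditional law is well defined, which in your version must be checked (as you note) from the finiteness of the relative entropy and the continuity of the density.
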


\begin{proof} 
See  appendix  \ref{A1}.
\end{proof}

\section{Entropic Semi-Martingale Optimal Transport }
\label{esmot}

\subsection{ESMOT primal formulation and discretization}
\label{smotprimal} 

We introduce the $\S^\Ic$-regularized  time continuous functional
 \begin{equation} \label{defI0}
  {\cal I}^0 (\P) \coloneqq\E_{\P} \left( \int_0^1    F(\V^\P_t(X_t),\D^\P_t(X_t))    + \S^{\Ic}(\D^\P_t(X_t) | \bD)) \,  dt \right)  + \Dist(\P_0,\rho_0) + \Dist(\P_1,\rho_1) 
\end{equation}
where $\sqrt{\bD}$ is a given reference volatility target,  $\V^\P,\D^\P$ are the characteristic coefficients associated to $\P \in \Pc^1$ (see section \ref{smot}). Note that there are, formally at least, no obstacles to considering a local in time and space  coefficient $\bD$.  The fidelity terms $\Dist(\P_t,\rho)$ properties are summarized in 
 \eqref{H-1}. 

The  time continuous SRE  regularisation of (\ref{SDK}) is:
 \begin{equation} \label{SDKe}   
 \tag{$\Vc^0$}
  \inf_{ \displaystyle  \P \in \Pc^1 } 
  \I^0  (\P) \,.
\end{equation}

Penalization/regularisation of the volatility is often necessary to deal with underdetermined problems like volatility calibration of pricing models in finance (see \cite{GLpathdep, HL19,GLS,BCL,Avellaneda}).   
   Here,  the choice of the SRE regularisation is  also  driven by its discrete-time   
formulation, {which will allow to rely on a Sinkhorn-like algorithm for numerical resolution}. Indeed, based on (\ref{SRE}) and Theorem \ref{ThStroockVaradhanExtendedInTime} we will use the  natural time discretisation of  (\ref{defI0}): 
 
\begin{equation}
  \label{cost2}
  \I^h(\P^h)   \coloneqq
   h\, \sum_{i=0}^{N-1}   \E_{\P^h_i}  \left( F( \V_i^h(X_i^h), \D_i^h(X_i^h) )  \right) +  h \, \Ec(\P^h | \bP^h)  + \Dist(\P_0^h,\rho_0) + \Dist(\P_1^h,\rho_1),
\end{equation}
with $(\V_i^h, \D_i^h)$, the discrete drift and quadratic variation increments defined in (\ref{SMOM}).
We thus consider
  \begin{equation} \label{SDKh}\tag{$\Vc^h$}
  \inf_{    \left\{ \begin{array}{l} 
  \P^h \in \Pc( \bigotimes_{i=0}^{N} \R_{t_i})\,   s.t.  \\[10pt] 
    h\, \sum_{i=0}^{N-1} \E_{\P^h_i} \left(   \K^h_i(X_i^h) \right)    \le   M 
    \end{array}  \right.} 
     \I^h (\P^h)\,.
\end{equation}
In this formulation, we have introduced an additional  constraint on a  moment of order larger than $4$ ($\alpha >0$) referred to  abusively  as \emph{kurtosis} (kurtosis is $\alpha = 0$,  see also  (\ref{KURT}): 
 \[  \begin{array}{l}
  \K^h_i(x_i)  \coloneqq   h^{-(2+\alpha)}\mathbb E_{\P^h_\tri} ( \| X_{i+1}-x_i\|^{ 4+ 2\,\alpha} )\,. 
  \end{array}  
\]
This additional constraint seems necessary to apply Theorem \ref{ThStroockVaradhanExtendedInTime} and 
ensures that the sequence  $(\P^h)_h$ converges in $\Pc^1$. Controlling the specific entropy through 
(\ref{dualSpec0}) is not sufficient  to guarantee (as in   \cite{SVbook} Chap.11 or \cite{kushner})  
 that the limit of $(\P^h)_h$   is a jump free   diffusion process. 
 We start with an existence result for the  discretised problem:
 \begin{theorem}[] 
\label{Dex}
Under the hypotheses (\ref{H-1}-\ref{H1}-\ref{H2}) problem  
\eqref{SDKh} has a unique Markovian solution $\P^h$.
\end{theorem}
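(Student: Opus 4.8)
The plan is to solve \eqref{SDKh} by the direct method of the calculus of variations, to read off uniqueness from the strict convexity of the relative entropy, and to obtain the Markov property from the chain rule for the entropy relative to the Markov reference $\bP^h$. The first observation is that, apart from the entropy, every term of $\I^h$ and the kurtosis functional $h\sum_{i}\E_{\P^h_i}(\K^h_i(X^h_i))$ depend on $\P^h$ only through the one- and two-time marginals $(\P^h_i,\P^h_\m2i)$, and that the maps $\P^h\mapsto\P^h_i$, $\P^h\mapsto\P^h_i\V^h_i$ and $\P^h\mapsto\P^h_i\D^h_i$ are linear, for instance $(\P^h_i\V^h_i)(dx_i)=\tfrac1h\int_{y}(y-x_i)\,\P^h_\m2i(dx_i,dy)$ (see the comment after \eqref{SMOM}). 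From this I read off: the feasible set of \eqref{SDKh} is convex; the $F$-term equals $\sum_i\int\widehat F\big(\P^h_i,\P^h_i\V^h_i,\P^h_i\D^h_i\big)$, where $\widehat F(p,q_b,q_a)\cqq p\,F(q_b/p,q_a/p)$ is the convex lower semicontinuous perspective of $F$, hence is convex in $\P^h$; and $\P^h\mapsto h\,\Ec(\P^h|\bP^h)$ is strictly convex on $\{\P^h\ll\bP^h\}$ since $t\mapsto t\log t$ is. Thus $\I^h$ is strictly convex on the convex feasible set.

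\textbf{Existence and uniqueness.} Assuming the feasible set contains a point of finite cost — which holds e.g. for $M$ large, where $\bP^h$ itself is feasible — I would take a minimizing sequence $(\P^{h,n})_n$. Boundedness of $h\,\Ec(\P^{h,n}|\bP^h)$ makes $(\P^{h,n})_n$ tight, because sublevel sets of the relative entropy against the fixed probability $\bP^h$ are tight, so a subsequence converges narrowly, $\P^{h,n}\rightharpoonup\P^h$. Finiteness of the $F$-term forces $\lambda\,Id\le\D^h_i\le\Lambda\,Id$ and $|\V^h_i|\le B$ along the sequence, hence a uniform bound on the second moments of the increments $X_{i+1}-X_i$, and the kurtosis constraint provides the additional integrability $\sup_n\E[\,|X_{i+1}-X_i|^{4+2\alpha}\,]<\infty$; together these upgrade the narrow convergence to $(\P^{h,n}_i,\P^{h,n}_i\V^{h,n}_i,\P^{h,n}_i\D^{h,n}_i)\rightharpoonup(\P^h_i,\P^h_i\V^h_i,\P^h_i\D^h_i)$. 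Then the entropy is narrowly lsc, the $F$-term is narrowly lsc as an integral of a convex lsc integrand along narrowly convergent vector measures, the terms $\Dist(\P^{h,n}_j,\rho_j)$ are narrowly lsc by Hypothesis~\ref{H-1} (the moment bound upgrading narrow convergence of $\P^{h,n}_0,\P^{h,n}_1$ to $\Wc_2$-convergence), and the kurtosis functional — an integral of a non-negative continuous function — is narrowly lsc, so the constraint $\le M$ passes to the limit. Hence $\P^h$ is feasible and minimizes $\I^h$, and the strict convexity above makes the minimizer unique.

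\textbf{Markov property.} For any feasible $\Q^h$ with $\I^h(\Q^h)<\infty$, let $\tQ^h$ be the Markov chain with initial law $\Q^h_0$ and transitions $\Q^h_\tri$. By induction on $i$, $\tQ^h$ has the same one- and two-time marginals as $\Q^h$, so it is again feasible and $\I^h(\tQ^h)$ differs from $\I^h(\Q^h)$ only in the entropy term; moreover, since $\bP^h$ is Markov, $\Q^h\ll\tQ^h\ll\bP^h$ and the chain rule gives
\[
\Ec(\Q^h|\bP^h)=\Ec(\tQ^h|\bP^h)+\Ec(\Q^h|\tQ^h)\ge\Ec(\tQ^h|\bP^h),
\]
with equality exactly when $\Q^h=\tQ^h$, i.e. when $\Q^h$ is Markov. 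Hence $\I^h(\tQ^h)\le\I^h(\Q^h)$, and applying this with $\Q^h=\P^h$ and invoking uniqueness forces $\P^h$ to coincide with its own Markovianization, so the minimizer $\P^h$ is Markov.

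\textbf{Main difficulty.} The step I expect to be the most delicate is the narrow lower semicontinuity of the $F$-term: because $F$ carries the barrier/indicator structure of \eqref{H1} while the conditional characteristics $\V^h_i,\D^h_i$ are nonlinear functionals of $\P^h$, one must pass to the linear moment measures $(\P^h_i,\P^h_i\V^h_i,\P^h_i\D^h_i)$ and invoke a Reshetnyak-type semicontinuity theorem for integrals of convex lsc integrands. This is precisely where the extra kurtosis constraint in \eqref{SDKh} enters: it secures uniform integrability of the increments $X_{i+1}-X_i$ at an order strictly above two, hence genuine convergence of $\P^{h,n}_i\D^{h,n}_i$ to $\P^h_i\D^h_i$ rather than merely one-sided control, and similarly for the drift terms.
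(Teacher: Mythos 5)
Your proposal is correct and follows essentially the same route as the paper: convexity of the $F$-term and kurtosis constraint via the linear change of variables to the moment measures and the perspective function, uniqueness from the strict convexity of the relative entropy, and Markovianity from the additivity/chain-rule property of the entropy relative to the Markov reference $\bP^h$ (equality in the Markovianization inequality iff the measure is Markov). You additionally spell out the direct-method details (tightness of entropy sublevel sets, narrow lower semicontinuity of each term) that the paper leaves as standard, which is a welcome elaboration rather than a departure.
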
 
\begin{proof}
 Appendix \ref{B3}
\end{proof}

Before studying the convergence of the discretized problem to the continuous one, let us first comment on the difference between the two formulations: there is no explicit bound on the kurtosis of the continuous process. This bound comes from the boundedness of the drift and diffusion coefficients as an immediate consequence of Lemma \ref{lemmaMH},  section \ref{A5}.
Therefore, choosing the constant $M$  large enough in the discretised problem will be sufficient to ensure that the continuous limit of of this constaint is not saturated. 

Our  main result is the convergence of the values of the discretized problems to the continuous one:
\begin{theorem}[]
\label{maintheo} 
Let $\rho_0, \rho_1 \in \Pc_2(\R^d)$ be given.
Under the hypotheses (\ref{H-1}-\ref{H1}-\ref{H2}), let 
$\P^0$ be a minimizer of \eqref{SDKe} and $(\P^h)_h$  a sequence 
of minimizers of \eqref{SDKh}.
Then, 
\begin{equation}
\nonumber
\lim_{h\searrow 0}\I^h(\P^h) = \I^0(\P^0)\,.
\end{equation}

\end{theorem}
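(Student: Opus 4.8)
The statement is a $\Gamma$-convergence-type equality of values, so the plan is to establish separately a $\limsup$ inequality through an explicit recovery sequence and a $\liminf$ inequality through compactness and lower semicontinuity, and then to combine the two.

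\textbf{Upper bound.} Let $\P^0$ minimize \eqref{SDKe}. Since $\P^0$ need not have smooth characteristics, I would first regularize it into a measure $\P^{0,\delta}\in\Pc^1$ obtained from a joint time--space mollification of the characteristics of $\P^0$: convolution preserves the convex constraints $\D^{\P^{0,\delta}}\in[\lambda\,\mathrm{Id},\Lambda\,\mathrm{Id}]$ and $|\V^{\P^{0,\delta}}|\le B$, so $(\V^{\P^{0,\delta}},\D^{\P^{0,\delta}})$ are $C^2$, bounded and uniformly elliptic, and standard stability for such SDEs together with dominated convergence on the compact box $[0,B]\times[\lambda\,\mathrm{Id},\Lambda\,\mathrm{Id}]$ (where $F$ and $\S^{\Ic}$ are continuous) and the $\Wc_2$-Lipschitz property of $\Dist$ in Hypothesis~\ref{H-1} (for the fidelity terms) give $\I^0(\P^{0,\delta})\to\I^0(\P^0)$ as $\delta\to0$. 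For fixed $\delta$, set $\P^{h,\delta}:=(e_{t_0,\dots,t_N})_\#\,\P^{0,\delta}$; its marginals at $t_0=0$ and $t_N=1$ coincide with those of $\P^{0,\delta}$, so its fidelity terms are unchanged, Proposition~\ref{p1}~i) gives that $h\,\Ec(\P^{h,\delta}\,|\,\bP^h)$ converges to the $\S^{\Ic}$-regularization term of $\I^0$ evaluated at $\P^{0,\delta}$, and a Riemann-sum argument based on the consistency of the discrete characteristics \eqref{SMOM} with the smooth $(\V,\D)$ (Theorem~\ref{ThStroockVaradhanExtendedInTime}) and continuity of $F$ gives $h\sum_i\E_{\P^{h,\delta}_i}\!\big(F(\V^h_i,\D^h_i)\big)\to\E_{\P^{0,\delta}}\!\big(\int_0^1 F(\V_t,\D_t)\,dt\big)$; hence $\I^h(\P^{h,\delta})\to\I^0(\P^{0,\delta})$. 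Since $\P^{0,\delta}$ has bounded characteristics, Lemma~\ref{lemmaMH} bounds $h\sum_i\E_{\P^{h,\delta}_i}(\K^h_i)$ uniformly for small $h$ by a constant depending only on those bounds, so for $M$ fixed large enough $\P^{h,\delta}$ is admissible in \eqref{SDKh} for all small $h$. A diagonal extraction in $(h,\delta)$ then produces an admissible sequence $\widetilde\P^h$ with $\limsup_h\I^h(\widetilde\P^h)\le\I^0(\P^0)$, and by minimality of $\P^h$, $\limsup_h\I^h(\P^h)\le\I^0(\P^0)$.

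\textbf{Compactness and lower bound.} The upper bound yields a uniform energy bound $\I^h(\P^h)\le C$, hence uniform control of $h\sum_i\E_{\P^h_i}(F(\V^h_i,\D^h_i))$, of $h\,\Ec(\P^h|\bP^h)$ and, through \eqref{dualSpec0}, of $\tfrac{h}{2}\sum_i\E_{\P^h_i}(\S^{\Ic}(\D^h_i|\bD))$, and of $\Dist(\P^h_j,\rho_j)$ for $j=0,1$; together with the kurtosis constraint $h\sum_i\E_{\P^h_i}(\K^h_i)\le M$ these are exactly the estimates required by Theorem~\ref{ThStroockVaradhanExtendedInTime} (in the spirit of \cite{SVbook}, Ch.~11, and \cite{kushner}) to conclude that the time interpolations of $\P^h$ are tight and that any weak limit point $\P^\star$ belongs to $\Pc^1$ --- in particular it is a jump-free semimartingale whose characteristics $(\V^{\P^\star},\D^{\P^\star})$ arise as limits of $(\V^h,\D^h)$ in the sense of the associated occupation measures on $[0,1]\times\R^d\times\S^d_+\times\R^d$. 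Along such a subsequence I would then establish $\liminf_h\I^h(\P^h)\ge\I^0(\P^\star)$ term by term: convexity and lower semicontinuity of $F$ (Hypotheses~\ref{H1} and~\ref{H2}), together with the linearity of $\P^h_i\,\V^h_i$ and $\P^h_i\,\D^h_i$ in $\P^h$ noted after \eqref{SMOM}, give by a Jensen/relaxation argument on the occupation measures that $\liminf_h h\sum_i\E_{\P^h_i}(F(\V^h_i,\D^h_i))\ge\E_{\P^\star}(\int_0^1 F(\V_t,\D_t)\,dt)$; Proposition~\ref{p1}~ii) bounds the entropic term below by $h\,\Ec(\P^h_0|\rho_0)+\tfrac{h}{2}\sum_i\E_{\P^h_i}(\S^{\Ic}(\D^h_i|\bD))$, and since $h\,\Ec(\P^h_0|\rho_0)\ge0$ and $\S^{\Ic}$ is convex, the $\liminf$ of this quantity dominates the $\S^{\Ic}$-regularization term of $\I^0$ evaluated at $\P^\star$; finally weak continuity of $e_0,e_1$ and lower semicontinuity of $\Dist(\cdot,\rho_j)$ (a Wasserstein distance plus a convex moment indicator under Hypothesis~\ref{H-1}) handle the fidelity terms. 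Summing, $\liminf_h\I^h(\P^h)\ge\I^0(\P^\star)\ge\I^0(\P^0)$, the last inequality because $\P^0$ minimizes \eqref{SDKe} and $\P^\star\in\Pc^1$.

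\textbf{Conclusion and main difficulty.} Combining the two bounds gives $\limsup_h\I^h(\P^h)\le\I^0(\P^0)\le\liminf_h\I^h(\P^h)$, i.e. $\lim_h\I^h(\P^h)=\I^0(\P^0)$; in passing, the limit point $\P^\star$ is itself a minimizer of \eqref{SDKe}. The main obstacle is the compactness step together with the entropic part of the lower bound: one must simultaneously rule out that quadratic variation concentrates on jumps in the limit (so that $\P^\star\in\Pc^1$ and $\langle X\rangle$ remains absolutely continuous) --- which is precisely why the extra moment $\K^h_i$ with exponent $4+2\alpha$ is imposed, since controlling only the discrete $\S^{\Ic}$ through \eqref{dualSpec0} is not enough --- and identify the limit of the discrete characteristics \eqref{SMOM} with the continuous characteristics of $\P^\star$ in spite of the $O(h)$ discrepancy $\D^h_i\approx v^h_i+h\,|\V^h_i|^2$ noted after \eqref{SMOM}. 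Reconciling these two facts is exactly the role of Theorem~\ref{ThStroockVaradhanExtendedInTime}, so the delicate part of the proof is checking its hypotheses from the energy and moment bounds and propagating the resulting convergences through all three terms of $\I^h$.
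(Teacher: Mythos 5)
Your overall architecture (recovery sequence for the $\limsup$, compactness plus lower semicontinuity for the $\liminf$) is the same as the paper's, but the $\liminf$ step contains a genuine gap. You claim that the energy bound, the bound from \eqref{dualSpec0} and the kurtosis constraint ``are exactly the estimates required by Theorem \ref{ThStroockVaradhanExtendedInTime}''. They are not: conditions $i)$ and $ii)$ of \eqref{DDD11} demand that the piecewise-constant discrete coefficients $\tV^h_t(x),\tD^h_t(x)$ of \eqref{cSMOM} converge \emph{uniformly on compacts} to limit functions $\tV^0,\tD^0$. For the raw minimizers $\P^h$ of \eqref{SDKh} these coefficients are merely bounded measurable functions; no equicontinuity or pointwise convergence follows from the bounds you list, so the theorem cannot be invoked, and with it collapses your identification of the characteristics of the limit $\P^\star$ and the claim $\P^\star\in\Pc^1$. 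The paper circumvents this precisely by \emph{not} passing to the limit on the raw minimizers: it first rescales (Lemma \ref{Glemma}) and space--time mollifies the discrete plans $\Pi^h$ (Lemma \ref{FXlemma}, including a time extrapolation outside $[0,1]$ and a gluing step), obtaining chains $\P^h_\de$ whose coefficients $m^h_\de/\rho^h_\de$, $n^h_\de/\rho^h_\de$ are smooth and, after extraction, converge uniformly on compacts; only then is Theorem \ref{ThStroockVaradhanExtendedInTime} applicable, the lower semicontinuity of the perspective functional $\J$ is used on the regularized moment variables $(\tro^h_\de,\tm^h_\de,\tn^h_\de)$, and the errors $O(\ee)+O(\delta)$ are removed at the very end. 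Your ``occupation measure / Jensen'' sentence hides exactly this missing construction; note also that the paper (Remark \ref{RR}) deliberately refrains from asserting that the limit of the raw minimizers is a minimizer of \eqref{SDKe}, whereas you claim this ``in passing''.

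A secondary, smaller gap sits in your recovery sequence: mollifying the characteristics of $\P^0$ preserves $\D\in[\lambda\,\mathrm{Id},\Lambda\,\mathrm{Id}]$ and $|\V|\le B$ for the \emph{continuous} process, but the \emph{discrete} characteristics \eqref{SMOM} of its time discretization differ by $O(h)$ corrections (e.g.\ $\D^h_i \approx v + h\,\V^2$ plus averaging errors, cf.\ \eqref{driftdisc}--\eqref{diffdisc}), so if $\D^0$ saturates the bound the discretized candidate can have $F=+\infty$ and the comparison $\I^h(\P^{h,\delta})\to\I^0(\P^{0,\delta})$ fails. This is why the paper first applies the volatility rescaling of Lemma \ref{Glemma} to create a margin $\delta$ in the hard constraints before discretizing (Lemma \ref{LF3}), and why the time mollification near $t=0,1$ requires the extrapolation of Lemma \ref{ThRegInContinuousTime} rather than a plain convolution. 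Incorporating these two regularization layers would bring your argument in line with the paper's proof.
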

\begin{proof}
    See  Section \ref{mainproof}.
\end{proof}

\begin{remark}
\label{RR}
Theorem \ref{maintheo} does not elaborate on the convergence of the minimizing sequences $(\V^h,\D^h,\P^h)$, but in the course of the proof  we construct
from $(\P^h)_h$ a family of smooth measures on Markov chains $(\P^h_\ee)_{h,\ee}$ , depending on a small regularisation 
parameter $\ee$,  such that 
\begin{itemize}
    \item[i)]$\I^h(\P^h_\ee)$ is close to $\I^h(\P^h)$ depending on $\ee$,
    \item[ii)] for all $\ee>0$,  $\P^h_\ee$ converges as $h\to 0$ (in a sense to be defined) to $\P_\ee$, a smooth measure on $\Omega$,
    \item[iii)] $\I^0(\P_\ee)$ is close to $\I^0(\P^0)$ depending on $\ee$.    
\end{itemize}
\end{remark}

\begin{remark} 
We point out that this result is not  a classic $\Gamma$-convergence result ($\I^h \rightarrow \I^0$) as we deal 
with discrete time Markov chains on the $I^h$ side and continuous diffusion processes as arguments of  $\I^0$.  
Our  interest is  in the behavior 
of  numerical solutions to (\ref{SDKh}).
\end{remark}

\begin{remark}
The discrete approximation of (\ref{SDKe}) corresponds to a {\em relative entropic regularised} problem 
and will be solved in Sections \ref{duality} and \ref{numerics} using a Sinkhorn algorithm. 
Note that under stronger convexity assumptions on $F$, we can apply this strategy  
to any stochastic control problem of the type (\ref{SDK}) simply by writing $ F = F -\S^\Ic  +\S^\Ic$ 
applying (\ref{SRE}) only to the last term. 
\end{remark}

\subsection{Convergence of the time discretisation}
\label{mainproof} 
The proof shares similarities with  $\Gamma-$convergence and  relies on well-chosen regularization of the minimizers. Since it  is long, 
 we highlight 
the main steps and associated intermediate lemmas.
Details  are deferred to appendix \ref{annex}.  

We start with three technical lemmas. 
The first lemma concerns the scaling of the drift and diffusion coefficients, which is needed to satisfy the hard constraints in (\ref{H1}) when using regularisations of the minimizers.
\begin{lemma}[Diffusion coefficients rescaling]
\label{Glemma} 
Let $X_t$ have law $\P \in \Pc^1$ with characteristics $\V, \D $ satisfying $\lambda Id < \D < \Lambda Id $ and 
$| \V| < B$.  For all $1 \gg \delta>0$, let
\[
\Xd=\sqrt{\alpha}X +\sqrt{\ee} B(t),
\]
$
\ee = \delta \, \frac{\lambda + \Lambda }{\Lambda - \lambda} $ and $ \alpha = 1 - \frac{2 \,\ee }{ \lambda + \Lambda }$ depending  on $\delta$, and  $B(t)$ an independent standard Brownian motion.
Then, $\Xd$ has law $\Pd \in \Pc^1$ with characteristics $\Vd, \Dd$, and we have that:

\begin{itemize}
\item[i)] Letting  
$T_{\sqrt\alpha}(x) = \sqrt\alpha x$, and $\P_{t,\delta} := (e_{t})_\# \Pd$, 
\[ \P_{t,\delta} :=  {\cal N}(0,\ee t) \star [T_{\sqrt\alpha}]_\sharp \, \P_t, \] 
\item[ii)] Scaling of the coefficients:
$
(\lambda +\delta) \, Id < \Dd <  (\Lambda - \delta)\, Id, \mbox{   and   }  |\Vd | \le (1-\delta)B \,. 
$ 
\item[iii)] $\Pd$ narrowly converges to $\P$ as $\delta  \searrow 0$ and 
$\I^0(\Pd)  \le \I^0(\P) + O(\delta)\,,$
where the notation $O(\delta)$ hides a constant that depends on the Lipschitz constant of $G$ and $\lambda, \Lambda, B$.
\end{itemize}    

Moreover, the same holds for a discrete process $X^h$ with law $\P^h$, considering $\P^h_\delta$ the probability of   $X^h_\delta(t) = \sqrt{\alpha} X^h(t) +  \sqrt{\varepsilon} B(t)$ as above.
We have, for $\alpha$ sufficiently close to $1$,
\begin{itemize}
\item[iv)] Scaling of the coefficients:
$
(\lambda +\delta) \, Id < \Dd^h <  (\Lambda - \delta)\, Id, \mbox{   and   }  |\Vd^h | \le (1-\delta)B \,.
$
\item[v)] $\Pd^h$ narrowly converges to $\P^h$ as $\delta  \searrow 0$ and 
\begin{equation}\label{ThDiscreteRescaling}
\I^h( \P^h_\delta)
\leq \I^h( \P^h)  + O(\delta)\,.
\end{equation}

\end{itemize}    
\end{lemma}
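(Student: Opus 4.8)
The plan is to prove the five claims by treating the continuous and discrete cases in parallel, since the discrete process $X^h_\delta(t) = \sqrt\alpha X^h(t) + \sqrt\ee B(t)$ is built by exactly the same affine transformation. First I would record how the characteristics transform: if $X$ has characteristics $(\V,\D)$, then scaling by $\sqrt\alpha$ multiplies the drift by $\sqrt\alpha$ and the quadratic variation by $\alpha$, while adding the independent Brownian motion $\sqrt\ee B(t)$ adds $\ee\, Id$ to the diffusion and nothing to the drift. Hence $\Vd = \sqrt\alpha\,\V$ and $\Dd = \alpha\,\D + \ee\, Id$, and likewise $\Pd \in \Pc^1$ because boundedness and integrability of the characteristics are preserved. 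Item i) is then just the statement that pushing forward by $T_{\sqrt\alpha}$ and convolving with $\Nc(0,\ee t)$ — the law of $\sqrt\ee B(t)$ — commutes with taking the time-$t$ marginal, which is immediate from independence.

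For item ii), I would plug the chosen $\ee = \delta\frac{\lambda+\Lambda}{\Lambda-\lambda}$ and $\alpha = 1 - \frac{2\ee}{\lambda+\Lambda}$ into $\Dd = \alpha\D + \ee Id$ and verify the two-sided bound: from $\lambda Id < \D < \Lambda Id$ we get $\alpha\lambda + \ee < \Dd < \alpha\Lambda + \ee$, and the arithmetic is rigged so that $\alpha\lambda + \ee = \lambda + \delta$ and $\alpha\Lambda + \ee = \Lambda - \delta$ (a one-line check: $\alpha\lambda + \ee = \lambda - \ee\frac{2\lambda}{\lambda+\Lambda} + \ee = \lambda + \ee\frac{\Lambda - \lambda}{\lambda+\Lambda} = \lambda + \delta$, and symmetrically for the upper bound). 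For the drift, $|\Vd| = \sqrt\alpha\,|\V| < \sqrt\alpha\, B \le (1-\delta)B$ once $\delta$ is small enough that $\sqrt\alpha \le 1-\delta$, which holds since $\alpha = 1 - O(\delta)$ so $\sqrt\alpha = 1 - O(\delta)$; this also gives item iv) for the discrete case, noting that the discrete characteristics $(\V^h,\D^h)$ of (\ref{SMOM}) are affine in increments of $X^h$ and transform the same way up to the $O(h)$ corrections built into the definition of $\D^h$, so one needs $\alpha$ close enough to $1$, as stated.

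For items iii) and v), narrow convergence $\Pd \to \P$ (resp. $\Pd^h \to \P^h$) as $\delta \searrow 0$ follows because $\alpha \to 1$ and $\ee \to 0$, so $X_\delta \to X$ in probability uniformly on $[0,1]$ (for the continuous case, $\sup_t|X_\delta(t) - X(t)| \le (1-\sqrt\alpha)\sup_t|X(t)| + \sqrt\ee\sup_t|B(t)| \to 0$ in probability), whence convergence of laws on $\Omega$. The energy estimate is where the real content lies: I would split $\I^0$ into the running cost $\E_\P\int_0^1 F(\Vd,\Dd) + \S^\Ic(\Dd|\bD)\,dt$ and the fidelity terms $\Dist(\P_{0,\delta},\rho_0) + \Dist(\P_{1,\delta},\rho_1)$. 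By (\ref{H2}) $G$ is Lipschitz on $[\lambda,\Lambda]\times[0,B]$, and $(\Vd,\Dd)$ differs from $(\V,\D)$ by $O(\delta)$ uniformly (using the bounds on $\V,\D$), so $F(\Vd,\Dd) - F(\V,\D) = G(\Vd,\Dd) - G(\V,\D) = O(\delta)$ pointwise — the indicator parts are finite by item ii) — and by item ii) $\Dd$ stays in $[\lambda,\Lambda]$ so $\S^\Ic(\Dd|\bD)$ is evaluated away from its singularity and, being locally Lipschitz there, also changes by $O(\delta)$. Taking expectations gives $O(\delta)$ for the running part. For the fidelity, item i) says $\P_{t,\delta} = \Nc(0,\ee t)\star[T_{\sqrt\alpha}]_\sharp\P_t$, so $\Wc_2(\P_{t,\delta},\P_t) \le \Wc_2(\Nc(0,\ee t)\star[T_{\sqrt\alpha}]_\sharp\P_t, [T_{\sqrt\alpha}]_\sharp\P_t) + \Wc_2([T_{\sqrt\alpha}]_\sharp\P_t,\P_t) \le \sqrt{\ee t\, d} + (1-\sqrt\alpha)(\int|x|^2 d\P_t)^{1/2} = O(\delta)\cdot O(1)$, and then hypothesis (\ref{H-1})(3), the $\Wc_2$-Lipschitz property of $\Dist$ (together with the convolution remark following it to absorb the smoothing), gives that each fidelity term changes by $O(\delta)$. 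Summing yields $\I^0(\Pd) \le \I^0(\P) + O(\delta)$. The discrete estimate (\ref{ThDiscreteRescaling}) is proved the same way for the $F$-part and the fidelity part; the one genuinely new term is the entropy $h\,\Ec(\Pd^h|\bP^h)$, for which one uses that convolving with a Gaussian and scaling can only be controlled via (\ref{RE1})-type computations — this is the main obstacle. I expect one must show $h\,\Ec(\Pd^h|\bP^h) \le h\,\Ec(\P^h|\bP^h) + O(\delta)$ by comparing the conditional Gaussianized transitions: the added independent noise $\sqrt\ee\, dB$ raises the effective diffusion by $\ee$, which by the monotonicity/convexity of $\S^\Ic$ near $\bD$ (already used in (\ref{dualSpec0})) perturbs the specific-entropy integrand by $O(\ee) = O(\delta)$, while the drift term $\frac{h}{\bD}|\mu^h|^2$ is only decreased by the factor $\alpha<1$; the $O(h)$ discrepancy between $(a^h, b^h)$ and the naive $(v^h,\mu^h)$ must be tracked but contributes a lower-order term. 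Care is needed that the constant in $O(\delta)$ is uniform in $h$, which is why the hypothesis that $\rho$ in Proposition \ref{p1} ii) is continuous and the coefficients are bounded is invoked.
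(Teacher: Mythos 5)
Your handling of items i)--iv), of the $F$-part (Lipschitz bound on $G$ on its domain, with the continuous-time specific-entropy integrand absorbed as a Lipschitz function of $\D$ away from its singularity), and of the marginal fidelity terms via Hypothesis (\ref{H-1}) follows essentially the same route as the paper. The genuine gap is exactly at the point you flag as ``the main obstacle'': the discrete entropy term $h\,\Ec(\Pd^h\,|\,\bP^h)$ in item v). Your proposed mechanism --- compare ``Gaussianized'' transitions through an identity of the type \eqref{RE1} and use monotonicity/convexity of $\S^{\Ic}$ to absorb the extra variance $\ee$ --- does not apply here: \eqref{RE1} is an exact computation only when the transition kernels of $\P^h$ are Gaussian, whereas in Lemma \ref{Glemma} $\P^h$ is an arbitrary Markov chain with finite $\I^h$, and its relative entropy with respect to $\bP^h$ is \emph{not} a function of the conditional moments $(\V^h_i,\D^h_i)$ of \eqref{SMOM}. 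The only moment-based comparison available, \eqref{dualSpec0}, is a lower bound on the entropy, so it cannot yield the upper bound $h\,\Ec(\Pd^h|\bP^h)\le h\,\Ec(\P^h|\bP^h)+O(\delta)$ that \eqref{ThDiscreteRescaling} requires; as written, this step would fail for non-Gaussian transitions.

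The paper closes this step by working directly with the full relative entropy on $\R^{dN}$ rather than with conditional moments. After the change of variables to increments $Z=(x_1,x_2-x_1,\dots,x_N-x_{N-1})$, the reference $\bP^h$ becomes an explicit Gaussian, and the perturbation splits into two elementary operations: (a) the convolution with the Gaussian kernel induced by $\sqrt{\ee}\,B$ is handled by convexity of $\P\mapsto\Ec(\P|\bP^h)$, writing $\Ec(\P^h_\ee|\bP^h)\le\int\Ec(\P^h|[T_X]_\sharp\bP^h)\,G(X)\,dX$ and using that $\log$ of the translated Gaussian reference differs by a linear-plus-quadratic term whose average under the centered kernel is exactly the variance, giving $h\,\Ec(\P^h_\ee|\bP^h)\le h\,\Ec(\P^h|\bP^h)+\ee$; (b) the dilation $S_{\sqrt{\alpha}}$ is computed explicitly, producing $-\tfrac{d}{2}\log{\alpha}$ plus a term proportional to $(\alpha-1)$ times the second moment/quadratic variation of $\P^h$, which is bounded uniformly in $h$ (this is where the uniform moment control, not $\S^{\Ic}$, enters). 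You would need to supply an argument of this kind (or regularize the reference measure as well, as in the paper's concluding remark) to make item v) rigorous; a secondary quibble is that your $\Wc_2$ estimate for the fidelity terms gives $\sqrt{\ee t d}=O(\sqrt{\delta})$ rather than $O(\delta)$, so you must invoke the convolution-variance consequence of (\ref{H-1}) rather than the raw $\Wc_2$-Lipschitz bound to get the stated rate.
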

\begin{proof}
 Appendix \ref{B1}.
\end{proof}

The next lemma concerns the regularization technique for the time-continuous formulation.

\begin{lemma}[Time (and space) regularization - the continuous case]
\label{ThRegInContinuousTime}
    Let $(\P^0,\V^0,\D^0)$ representing a Markov process $X(t)$ solving, for a Brownian motion $B(t)$,
    \begin{equation}
        dX(t) = \V^0(t,X)dt + \D^0(t,X) dB(t)
    \end{equation} such that $\mathcal{I}(\P^0) < +\infty$. Fix $\sigma > 0$ and let $\varepsilon> 0$ such that $2 \varepsilon \Lambda < \sigma$.
We further assume that $ | b^0|\leq M /(1 + 2\varepsilon)$ and $  a^0 \leq \frac{\Lambda}{(1 + 2\varepsilon)^2}\operatorname{Id}$.
    Then, there exists another Markov process $X_{\varepsilon,\sigma}(t)$, solving the SDE as above  for the quantities $\V_{\varepsilon,\sigma}(t,x),\D_{\varepsilon,\sigma}(t,x)$ which are smooth in $t,x$ such that 
    $
        \mathcal{I}(\P_{\varepsilon,\sigma}) \leq \mathcal{I}(\P^0) + O(\varepsilon) + O(\sigma)\,.
    $
\end{lemma}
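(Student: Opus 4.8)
The plan is to regularize the coefficients of $X(t)$ in two successive steps — first a space mollification, then a time mollification — while controlling the increase of each term of $\mathcal{I}^0$. Concretely, write $\mathcal{I}^0(\P) = \E_\P\!\left(\int_0^1 F(\V_t,\D_t) + \S^\Ic(\D_t\mid\bar a)\,dt\right) + \Dist(\P_0,\rho_0) + \Dist(\P_1,\rho_1)$, and observe that the potential sources of blow-up are: (a) the hard constraints $\iota_{[\lambda,\Lambda]}(a)$ and $\iota_{[0,B]}(|b|)$ hidden in $F$ via \eqref{H1}, which a mollified coefficient could violate because mollification does not preserve pointwise bounds unless there is a margin, and (b) the fidelity terms $\Dist(\P_t,\rho)$, which see the law of the mollified marginals. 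The hypothesis $|b^0|\le M/(1+2\varepsilon)$ and $a^0 \le \Lambda/(1+2\varepsilon)^2\,\mathrm{Id}$ is exactly the margin that makes the rescaling compatible with the bounds after perturbation; this is why it is assumed here rather than derived.

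First I would perform the \textbf{spatial regularization}. Given the SDE $dX = \V^0(t,X)\,dt + \sqrt{\D^0(t,X)}\,dB$, define mollified coefficients $\V^0_\sigma := \V^0 \star_x \eta_\sigma$, $\D^0_\sigma := \D^0 \star_x \eta_\sigma$ where $\eta_\sigma$ is a smooth convolution kernel of spatial variance $\sim\sigma$ (or, more carefully, mollify the law of the process by convolving with a small Gaussian in space and reading off the new characteristics, in the spirit of Lemma \ref{Glemma}i)). Smoothness in $x$ is immediate. For the cost terms: $F$ and $\S^\Ic$ are convex in $(a,b)$, so by Jensen the spatial average cannot increase the integral of $F + \S^\Ic$ beyond a term controlled by the Lipschitz constant of $G$ times $\sigma$ — here the assumed margin keeps $\D^0_\sigma$ inside $[\lambda,\Lambda]$ and $|\V^0_\sigma|$ inside $[0,B]$, so the indicator parts stay at $0$. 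For the fidelity terms, the convolved marginal $\eta_\sigma \star \P_t$ satisfies $|\Dist(\eta_\sigma\star\P_0,\rho_0) - \Dist(\P_0,\rho_0)| \le C v_{\eta_\sigma} = O(\sigma)$ by the consequence of Hypothesis \eqref{H-1} stated just after it (and likewise at $t=1$). This yields an intermediate process with $x$-smooth coefficients and $\mathcal{I}^0 \le \mathcal{I}^0(\P^0) + O(\sigma)$.

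Next I would perform the \textbf{time regularization}. Mollify in $t$ by $\V_{\se}(t,x) := \int \V^0_\sigma(s,x)\,\kappa_\varepsilon(t-s)\,ds$ and similarly for $\D$, using a kernel of width $\varepsilon$ (with the convention, near the endpoints $t=0,1$, of extending the coefficients by their boundary values or reflecting, so the full $[0,1]$ interval is covered); this gives coefficients jointly smooth in $(t,x)$. Again convexity of $F+\S^\Ic$ in $(a,b)$ and Jensen in the time variable bound the increase of the integral term by $O(\varepsilon)$ times the Lipschitz constant of $G$; the margin assumptions on $b^0,a^0$ guarantee $\D_{\se}\le \Lambda\,\mathrm{Id}$, $\D_{\se}\ge\lambda\,\mathrm{Id}$ and $|\V_{\se}|\le B$ survive the averaging together with the earlier spatial step, so the indicators remain inert. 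The only subtle point is the effect of time mollification on the \emph{endpoint marginals}: since near $t=0$ and $t=1$ the time average now mixes in coefficient values from a window of size $\varepsilon$, the law of $X_{\se}(0)$ and $X_{\se}(1)$ differs from $\P_0,\P_1$; one controls $\Wc_2(\P_{\se,0},\P_0) = O(\varepsilon)$ (the drift/diffusion differ only on an $O(\varepsilon)$ time window with bounded coefficients, giving an $O(\varepsilon)$ displacement in $L^2$), and then Hypothesis \eqref{H-1}(3) converts this into $|\Dist(\P_{\se,0},\rho_0) - \Dist(\P^0_0,\rho_0)| \le C\,\Wc_2 = O(\varepsilon)$. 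Adding the two steps gives $\mathcal{I}(\P_{\varepsilon,\sigma}) \le \mathcal{I}(\P^0) + O(\sigma) + O(\varepsilon)$, which is the claim (renaming the smoothed process $X_{\varepsilon,\sigma}$ and its characteristics $\V_{\varepsilon,\sigma},\D_{\varepsilon,\sigma}$).

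The main obstacle I expect is \textbf{verifying that the mollified characteristics are genuinely the characteristics of an honest semimartingale in $\Pc^1$ and that no cross-term is lost in the convexity estimate}. Mollifying $\V^0$ and $\D^0$ as functions and then declaring the SDE $dX_{\se} = \V_{\se}(t,X_{\se})\,dt + \sqrt{\D_{\se}(t,X_{\se})}\,dB$ produces a new process whose \emph{law} has those coefficients, but the Jensen bound on $\E_{\P^0}\!\int F(\V^0,\D^0)$ compares against $\E_{\P^0}\!\int F(\V_{\se},\D_{\se})$ — under the \emph{old} law — whereas $\mathcal{I}^0(\P_{\se})$ integrates against the \emph{new} law $\P_{\se}$. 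Reconciling these requires either (i) a stability/continuity estimate showing $\P_{\se} \to \P^0$ narrowly with the bounded coefficients forcing convergence of $\E_{\P_{\se}}\!\int(F+\S^\Ic)$ to $\E_{\P^0}\!\int(F+\S^\Ic)$ up to $O(\varepsilon)+O(\sigma)$ (using Lipschitzness of $G$ on the compact $[0,B]\times[\lambda,\Lambda]$ and the sublinear growth and convexity of $\S^\Ic$), or (ii) defining the mollification directly at the level of measures (mollifying $\P^0$ itself, as in Lemma \ref{Glemma}) so that the characteristics of the mollified measure are exactly the conditional averages and Jensen applies under a single, consistent law. I would follow route (ii) to keep the bookkeeping clean, at the price of a somewhat more delicate computation of the characteristics of $\eta_\sigma\star\P^0$ and its time-regularized analogue.
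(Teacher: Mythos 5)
Your final choice of route (ii) — mollifying at the level of the measures, i.e.\ regularizing $(\P_t,\,\V_t\,d\P_t,\,\D_t\,d\P_t)$ and reading off the new characteristics as ratios — is exactly the paper's strategy (it uses $\P^0_{t,\ee}=\P^0_t\ast\etae$, $\V_{t,\ee}=((\V_t\,d\P_t)\ast\etae)/d\P_{t,\ee}$, $\D_{t,\ee}=((\D_t\,d\P_t)\ast\etae)/d\P_{t,\ee}$, combined with a prior Gaussian smoothing of the law in space), and you correctly identified why route (i) (mollifying the coefficient functions and re-solving the SDE) creates a law mismatch in the Jensen step. The convexity argument, the preservation of the hard constraints, and the use of Hypothesis \eqref{H-1} to control the change in the fidelity terms are all in line with the paper.

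The genuine gap is your treatment of the time endpoints, and with it the role of the margin hypotheses. Under route (ii) the time convolution near $t=0,1$ requires the curve $(\P_t,\V_t\,d\P_t,\D_t\,d\P_t)$ to be defined, and to remain an admissible Fokker--Planck evolution, on a neighbourhood of $[0,1]$: ``extending the coefficients by their boundary values or reflecting'' does not deliver this. Freezing the marginal outside $[0,1]$ corresponds to $\D\equiv 0$, which is killed by $\iota_{[\lambda,\Lambda]}$ (and by the barrier in $\S^{\Ic}$), while extending the coefficient \emph{functions} and re-solving reintroduces precisely the route-(i) mismatch you rejected. The paper instead extends the \emph{process} to $[-2\varepsilon,1+2\varepsilon]$ by a heat flow with constant diffusion $\sigma_0\in[\lambda,\Lambda]$ and zero drift, at cost $O(\varepsilon)$; the backward piece is started at time $-2\varepsilon$ from $g_{\sigma-2\varepsilon\sigma_0}\star\rho(0)$ so that it lands exactly on $g_\sigma\star\rho(0)$ at $t=0$ — this is why the spatial step must be a Gaussian smoothing of the law (not an arbitrary kernel $\eta_\sigma$ applied to the coefficients) and why the hypothesis $2\varepsilon\Lambda<\sigma$ appears. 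After mollifying on the extended interval one must rescale time from $[-\varepsilon,1+\varepsilon]$ back to $[0,1]$, and it is this dilation — not the mollification, which is a convex average and preserves $[\lambda,\Lambda]$ and $[0,B]$ for free — that the margin assumptions $|\V^0|\leq M/(1+2\varepsilon)$ and $\D^0\leq \Lambda/(1+2\varepsilon)^2\,\mathrm{Id}$ are designed to absorb; your reading of them as protection against bound-violation under mollification is off target. Your $\Wc_2$-plus-\eqref{H-1}(3) control of the endpoint fidelity terms is the right spirit and matches how the paper concludes ($O(\varepsilon)+O(\sigma)$ via the Gaussian-mixture structure of the regularized marginals at $t=0,1$), but it only becomes meaningful once the extension construction actually defines $\P_{\se}$ near the endpoints.
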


\begin{proof}
    Appendix \ref{B21}.
\end{proof}

\noindent
\textbf{Main steps of the proof: }
Up to step 4, we prove that the value of the continuous problem is lower bounded by the $\limsup$ of the values of the discrete problems. For that, we consider a probability with finite value for the continuous problem, rescale and regularize it, and then discretize it to obtain a candidate for the discrete problem.

\begin{enumerate}

\item[{\bf{ * Step 0: }}] Recall that (\ref{SDKh}) is well posed and $\P^h$ is markovian (see Theorem \ref{Dex}). We also have well-posedness  and existence of a solution $\P^0$ of the time continuous problem (\ref{SDKe})  is known (see Theorem 
\ref{th1}).

\item[{\bf * Step 1: }]  Given $\P^0 \in \Pc^1$ a minimizer of (\ref{SDKe}) 
we construct a regularised version $\P^0_{\se}$ by  applying  lemma 
 \ref{ThRegInContinuousTime} ($\sigma$ depends on $\delta$) . We get 
$
\I^0(\P_\se)  \le \I^0(\P) + O(\sigma) + O(\ee).
$


\item[{\bf * Step 2: }]  Set $\P^h_{\se} = (e_{t_0,t_1,...,t_N})_\# \P_{\se}$ 
the time discretisation of $\P_{\se}$, the output of {\bf step 1}. We want to compare $\I^h(\P^h_{\se})$ and 
$\I^0(\P_{\se})$. 

\begin{lemma}[]
\label{LF3}
For all  $h$ sufficiently small w.r.t. $\de$, we have
 \begin{itemize}  
 \item[i)] The family $(\P^h_\de)_h$ satisfies the hypothesis of proposition \ref{p1} $i)$ and in particular (see  (\ref{MarcA})) we have: 
 \[ 
 h\, \Ec(\P^h_\se | \bP^h) =  
 \frac{1}{2}\sum_{i = 1}^N h \, \mathbb E_{\mathbb P_\de}  \big[\mathcal \S^{\Ic}(\D^{\P_\se}(ih,X_{ih})|\bD \big]+O(h^{1/4}) \,.
 \] 
  \item[ii)] It holds $
 \I^h(\P^h_\se)  \le   \I^0(\P_\se) + O(h^\frac{1}{4}) \,.$
 \end{itemize}
\end{lemma}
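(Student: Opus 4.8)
First I would note that the continuous law $\P_\se$, of which $\P^h_\se$ is the time discretization, is — after the rescaling of Lemma \ref{Glemma} (which pushes the coefficients strictly inside the admissible region so that Lemma \ref{ThRegInContinuousTime} applies), followed by the time (and space) regularization of Lemma \ref{ThRegInContinuousTime} — the law of a Markov diffusion whose coefficients $\V^{\P_\se},\D^{\P_\se}$ are $C^\infty$ in $(t,x)$ with bounded space derivatives, and which by part iv) of Lemma \ref{Glemma} satisfy $(\lambda+\delta)Id<\D^{\P_\se}<(\Lambda-\delta)Id$ and $|\V^{\P_\se}|\le(1-\delta)B$; in particular $\D^{\P_\se}$ is invertible and $(\V^{\P_\se},\D^{\P_\se})$ are bounded with $\D^{\P_\se}$ uniformly elliptic, the constants depending on $\delta$ but not on $h$. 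This is exactly the setting of Proposition \ref{p1} i), and the displayed identity is its quantitative refinement, i.e.\ Theorem \ref{ThStroockVaradhanExtendedInTime} (equation (\ref{MarcA})). I would only sketch the mechanism: the chain rule for relative entropy along the Markov chain gives
\[
h\,\Ec(\P^h_\se|\bP^h)=h\,\Ec((\P_\se)_0\,|\,\rho_0)+h\sum_{i=0}^{N-1}\E_{\P_\se}\!\Big[\Ec\big(\P^h_{\se,\tri}(X_{t_i},\cdot)\,\big|\,\bP^h_\tri(X_{t_i},\cdot)\big)\Big],
\]
the first term being $O(h)$; for each of the $N=h^{-1}$ conditional terms, a local expansion of the one-step transition kernel around its Gaussian reference $\Nc(x_i,h\bD)$ — legitimate by smoothness and ellipticity — produces $\tfrac h2\,\S^\Ic(\D^{\P_\se}(t_i,x_i)|\bD)+\tfrac{h^2}{2\bD}|\V^{\P_\se}(t_i,x_i)|^2$ plus a remainder; summing over $i$, the drift contributions amount to $O(h)$ and the remainders to $O(h^{1/4})$, the fractional rate being the outcome of the truncation estimates (a bulk region where the Gaussian/Edgeworth expansion is accurate versus a tail region controlled by moment bounds) carried out in Appendix \ref{A1}.

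\textbf{Part ii).} Here the plan is to split $\I^h(\P^h_\se)$ into its three components and bound each by the corresponding component of $\I^0(\P_\se)$. Since time discretization leaves the endpoint marginals unchanged, $(\P^h_\se)_0=(\P_\se)_0$ and $(\P^h_\se)_1=(\P_\se)_1$, so the two $\Dist$ fidelity terms coincide exactly. The entropy term is controlled by i) together with a Riemann-sum estimate: $t\mapsto\E_{\P_\se}[\S^\Ic(\D^{\P_\se}_t(X_t)|\bD)]$ is Lipschitz (its time derivative, the generator of $\P_\se$ applied to the bounded $C^2$ integrand, is bounded because all the relevant quantities are smooth and bounded), so that $h\sum_i\E_{\P_\se}[\S^\Ic(\D^{\P_\se}(t_i,X_{t_i})|\bD)]=\E_{\P_\se}\big(\int_0^1\S^\Ic(\D^{\P_\se}_t(X_t)|\bD)\,dt\big)+O(h)$; combined with i) this gives $h\,\Ec(\P^h_\se|\bP^h)\le\tfrac12\E_{\P_\se}\big(\int_0^1\S^\Ic(\D^{\P_\se}_t(X_t)|\bD)\,dt\big)+O(h^{1/4})$. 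For the running cost, an Itô--Taylor expansion of the conditional moments (\ref{SMOM}) of the smooth diffusion gives $|\V^h_i(x)-\V^{\P_\se}(t_i,x)|+|\D^h_i(x)-\D^{\P_\se}(t_i,x)|=O(h)$ uniformly in $x$ — since $\D^h_i$ is built from the raw second moment rather than the variance it carries an extra $h\,\V\V^\star$-type term, still $O(h)$, as noted after (\ref{SMOM}). By part iv) of Lemma \ref{Glemma}, for $h$ small enough $(\V^h_i,\D^h_i)$ takes values in $\{|b|\le B\}\times[\lambda,\Lambda]$, where $F=G$ is finite and Lipschitz by (\ref{H1}-\ref{H2}), so $F(\V^h_i(x),\D^h_i(x))\le G(\V^{\P_\se}(t_i,x),\D^{\P_\se}(t_i,x))+O(h)$; one more Riemann sum ($O(h)$ error, same argument) then gives $h\sum_i\E_{\P_\se}[F(\V^h_i(X_{t_i}),\D^h_i(X_{t_i}))]\le\E_{\P_\se}\big(\int_0^1 F(\V^{\P_\se}_t(X_t),\D^{\P_\se}_t(X_t))\,dt\big)+O(h)$. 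Adding the three bounds yields ii).

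\textbf{Admissibility and main obstacle.} For completeness I would also verify that $\P^h_\se$ is admissible for (\ref{SDKh}), which is what makes the lemma usable downstream: boundedness of the coefficients of $\P_\se$ and the moment estimates of Lemma \ref{lemmaMH} give $\E_{\P^h_{\se,\tri}}[\|X_{i+1}-x_i\|^{4+2\alpha}]\le C\,h^{2+\alpha}$, so $\K^h_i(x_i)\le C$ uniformly in $x_i$ and $h\sum_{i=0}^{N-1}\E_{\P_\se}[\K^h_i(X_{t_i})]\le C\le M$ once $M$ is fixed large enough, independently of $h$ and $\delta$, as anticipated after Theorem \ref{Dex}. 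The main obstacle is the $O(h^{1/4})$ control in i): everything else reduces to a chain of elementary Riemann-sum and Lipschitz estimates, but quantifying the gap between the discrete relative entropy and its specific-entropy limit requires uniform-in-$i$ Gaussian-type bounds on the transition densities of the regularized diffusion together with a careful treatment of the non-Gaussian (Edgeworth) corrections and of the large-increment tails across all $N=h^{-1}$ steps — exactly the content of Theorem \ref{ThStroockVaradhanExtendedInTime} and Appendix \ref{A1}.
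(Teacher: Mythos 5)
Your proposal is correct and follows essentially the same route as the paper: part i) by checking that the rescaled and regularized law $\P_\se$ has smooth, bounded, uniformly elliptic coefficients so that Proposition \ref{p1} i) and its quantitative form (\ref{MarcA}) apply, and part ii) by splitting $\I^h$ into fidelity, entropy and running-cost terms, using the It\^o computation of the conditional moments (\ref{SMOM}), the margin from Lemma \ref{Glemma} to keep $(\V^h_i,\D^h_i)$ in the region where $F=G$, the Lipschitz property of $G$, and Riemann-sum estimates, with the kurtosis admissibility handled via Lemma \ref{lemmaMH}. The only (harmless) deviation is in the running cost: the paper compares $G$ evaluated at the time-averaged coefficients with the time average of $G$ via Jensen's inequality, whereas you expand the conditional moments to get uniform $O(h)$ closeness to $\V^{\P_\se},\D^{\P_\se}$ and then invoke Lipschitzness — both arguments are valid given the smoothness secured by the regularization step.
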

\begin{proof}
 Appendix \ref{B4}
\end{proof}

\item[{\bf * Step 3: }] For any  minimizer of 
 \eqref{SDKh}, denoted $\P^h$,  
and gathering the results in steps 1 and 2 we get 
\[
\I^h(\P^h) \le \I^h(\P^h_\se) \le \I^0(\P^0) + O(\sigma) + O(\ee) +  O(h^\frac{1}{4})
\,.\]
We pass to the limit first in $h$ and then in $\se$ to obtain 
$  \limsup_{h \searrow 0}   \I^h(\P^h)  \le  \I^0(\P^0) \,.$
The next steps show that the $\liminf$ of the values of the discrete problems is lower bounded by the value of the continuous problem.

\vspace{0.2cm}

\item[{\bf * Step 4: }]   We need the following 
definitions:

For $\P^h \in \Pc( \bigotimes_{i=0}^{N} \R_{t_i}) $
Markovian, we define piecewise constant in time interpolants 
\begin{equation}
\label{cv1}
\tro^h_t = \sum_{i=0}^N   \mathbf 1_{[t_i,t_{i+1})} \P_{i}^h, \quad 
\tV^h_t = \sum_{i=0}^N   \mathbf 1_{[t_i,t_{i+1})} \V_{i}^h, \quad 
\tD^h_t = \sum_{i=0}^N   \mathbf 1_{[t_i,t_{i+1})} \D_{i}^h 
\end{equation}
$( \V_{i}^h, \D_{i}^h)$ defined in (\ref{cSMOM}). We use the moment 
notations 
\begin{equation}
\label{cv2}
\tm^h_t = \tro^h_t\, \tD^h_t,  \quad \tn^h_t = \tro^h_t\, \tV^h_t.
\end{equation} 
We define the proxy functional for $\I^0$
\begin{equation}
    \J(\rho,m,n) = \int_{t,x} F(m/\rho,n/\rho) + \S^{\Ic}(n/\rho)   \, d\rho_t(x) dt\,.
\end{equation} 
 In particular we have for  $\P \in \Pc^1$ of finite  $\I^0$ energy: 
$
\I^0(\P) = \J( \P_t,  \P_t \, \V^\P, \P_t \, \D^\P)\, , 
$ 
and using  Proposition \ref{p1} ii) : 
$
  \J(\tro^h,\tm^h,\tn^h) \le   \I^{h}(\P^h) 
$
We need the following lemma:
\begin{lemma}[Time (and space) regularization - the discrete case]
\label{FXlemma}
    
Let  $(\P^h)_h $ be  sequence of Markov Chains such that $ \sup_{h} \I^h(\P^h) < + \infty$ and such that the fourth moment of $\P^h_0$ is uniformly bounded in $h$. Let $\de >0$. Then, there  exist $\P^h_{\de} \in \mathcal{P}(\R^{dN})$  and $\P_{\de} \in \Pc^1$ such that, up to a subsequence in $h$, 
\begin{itemize}
\item[i)] 
we have
\begin{equation}
\I^h( \P^h_{\de})
\leq \I^h( \P^h)  + O(\varepsilon) + O(h) + O(\delta)\,
\end{equation}
and 
\begin{equation}\label{IneqRiemannSum}
  \I^{h}(\P_\de^h) \geq  \mathcal{J}(\tro_\de^h,\tm_\de^h,\tn_\de^h) \,.
\end{equation}
where $(\tro_\de^h,\tm_\de^h,\tn_\de^h)$ is the change of variable (\ref{cv1}-\ref{cv2}) associated to $\P_\de^h$
\item[ii)] 
$\tP^h_{\de}$ narrowly converges to $\P_{\de} \in \Pc^1$ with characteristics 
$\D_t^{\P_\de} , \V_t^{\P_\de}$.

\item[iii)] 
In addition, we have
\[
\I^0( \P_\de) =   \J(\P_{\de,t},  \tP_{\de,t} \V_t^{\tP_\de} , \tP_{\de,t} \D_t^{\tP_\de}    )   \le \liminf_{h\searrow 0}   \J(\tro^h_\de,\tm^h_\de,\tn^h_\de)\,  .
\]
 \end{itemize}
\end{lemma}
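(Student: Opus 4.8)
The plan is to construct the regularized measures $\P^h_\de$ by the same "Brownian rescaling then time--space mollification" strategy used in Lemmas \ref{Glemma} and \ref{ThRegInContinuousTime}, but carried out at the discrete level, and then to extract a narrow limit using the coercivity hypotheses \eqref{H1}. First I would apply Lemma \ref{Glemma} iv)--v) to each $\P^h$ to obtain $\sqrt\alpha X^h + \sqrt\ee B(t)$ with characteristics strictly inside the admissible box $(\lambda+\delta)\,\mathrm{Id}<\D^h<(\Lambda-\delta)\,\mathrm{Id}$, $|\V^h|\le(1-\delta)B$, paying only an $O(\delta)$ cost in $\I^h$; this buffer is what later lets the mollified coefficients stay in the hard-constraint set. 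Then I would convolve the interpolated laws in space with a kernel of variance $O(\ee)$ and smooth in time, exactly as in Lemma \ref{ThRegInContinuousTime}; Proposition \ref{p1} ii) (the Jensen-type bound \eqref{dualSpec0}) guarantees that the change of variables $(\tro^h_\de,\tm^h_\de,\tn^h_\de)$ satisfies $\I^h(\P^h_\de)\ge \J(\tro^h_\de,\tm^h_\de,\tn^h_\de)$, which is \eqref{IneqRiemannSum}, while Hypothesis \eqref{H-1} (3) controls the perturbation of the fidelity terms $\Dist$ by $O(v_\eta)=O(\ee)$. Collecting the errors gives part i).

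For part ii), I would show that the bound $\sup_h\I^h(\P^h_\de)<\infty$, together with the uniform fourth-moment bound on $\P^h_0$ and the kurtosis constraint $h\sum_i\E_{\P^h_i}(\K^h_i)\le M$, yields tightness of the interpolated path measures $\tP^h_\de$. The drift and diffusion bounds from the rescaling give the Kolmogorov-type moment estimates needed for tightness on $C^0([0,1],\R^d)$; the kurtosis constraint controls the fourth moments of the increments so that, in the spirit of Stroock--Varadhan (as referenced via \cite{SVbook,kushner}), the limit has no jumps and is a genuine Itô semimartingale, i.e. $\P_\de\in\Pc^1$ with well-defined characteristics $\D^{\P_\de}_t,\V^{\P_\de}_t$. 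Passing to a subsequence, $\tP^h_\de\rightharpoonup\P_\de$ narrowly, and one identifies $\D^{\P_\de},\V^{\P_\de}$ as the weak limits of $\tD^h_\de,\tV^h_\de$ against the converging marginals, so $\tm^h_\de\rightharpoonup\P_{\de,t}\D^{\P_\de}_t$ and $\tn^h_\de\rightharpoonup\P_{\de,t}\V^{\P_\de}_t$ in the sense of measures on $[0,1]\times\R^d$.

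For part iii), the identity $\I^0(\P_\de)=\J(\P_{\de,t},\P_{\de,t}\V^{\P_\de},\P_{\de,t}\D^{\P_\de})$ is just the definition of $\I^0$ together with the formula $\I^0(\P)=\J(\P_t,\P_t\V^\P,\P_t\D^\P)$ valid on $\Pc^1$. The inequality $\I^0(\P_\de)\le\liminf_h\J(\tro^h_\de,\tm^h_\de,\tn^h_\de)$ is then lower semicontinuity of $\J$ under the narrow/weak-$*$ convergence established in ii): writing $\J(\rho,m,n)=\int F(m/\rho,n/\rho)+\S^{\Ic}(n/\rho)\,d\rho\,dt$ as an integral of the jointly convex, lower semicontinuous perspective-type integrand in $(\rho,m,n)$ (this is precisely the convexity/perspective structure emphasized around \eqref{convex}), one applies the standard lsc theorem for integral functionals of measures (Ioffe / Buttazzo). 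The coercivity of $F$ and the barrier behavior of $\S^{\Ic}$ near $a=0$ from \eqref{SI} ensure the integrand is bounded below and the functional is well-behaved under the limit.

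The main obstacle will be part ii): obtaining enough compactness to pass from discrete Markov chains to a genuine element of $\Pc^1$ — specifically, ruling out that mass escapes into "jump-like" behavior of the interpolants and verifying that the limiting characteristics $(\D^{\P_\de},\V^{\P_\de})$ really are the pointwise derivatives of the quadratic variation and drift of $\P_\de$, rather than merely weak limits of the discrete proxies \eqref{SMOM}. This is exactly where the extra kurtosis constraint in \eqref{SDKh} is used, and matching the discrete second-moment increments $\D^h_i = v^h_i + h (b^h_i)^2$ to the continuous characteristic $a$ in the limit (the $O(h)$ discrepancy noted after \eqref{SMOM}) requires care; the remaining estimates in i) and iii) are, by comparison, routine adaptations of Lemmas \ref{Glemma}--\ref{ThRegInContinuousTime} and classical lower-semicontinuity arguments.
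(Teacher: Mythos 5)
Your proposal follows the paper's general skeleton (rescale via Lemma \ref{Glemma}, mollify, use Proposition \ref{p1} ii) for \eqref{IneqRiemannSum}, conclude by lower semicontinuity of $\J$), but it leaves out the two load-bearing mechanisms of the actual proof. The first gap is the construction of $\P^h_\de$ itself. You propose to ``convolve the interpolated laws in space \ldots and smooth in time, exactly as in Lemma \ref{ThRegInContinuousTime}'', but that lemma works for a continuous diffusion: one mollifies the linear quantities $(\rho, \rho\V, \rho\D)$ and then re-solves an SDE with the smoothed coefficients. At the discrete level, mollifying the marginals or the moments $(\tro^h,\tm^h,\tn^h)$ in time does not by itself produce an element of $\Pc(\R^{dN})$ whose conditional increments realize those mollified moments -- and such an object is exactly what is needed both for the convexity estimate in i) and for \eqref{IneqRiemannSum}. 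The paper's device is to regularize the piecewise-constant curve of two-time plans $\Pi^h(t,\cdot,\cdot)=\sum_i \mathbf 1_{[t_i,t_{i+1})}\P^h_{i,i+1}$, observe that by linearity the compatibility $[p_2]_\sharp\Pi_\ee(ih)=[p_1]_\sharp\Pi_\ee((i+1)h)$ is preserved, and then glue the plans $\Pi_\ee(ih)$ into a genuine Markov chain $\P^h_\de$; this is preceded by a time extrapolation of the chain beyond $[0,1]$ (via the first steps of Lemma \ref{ThRegInContinuousTime}) so that the time convolution is defined near the endpoints and the $\Dist$ terms are only perturbed by a controlled amount, followed by a time rescaling. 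None of this appears in your plan, and without it the $O(\ee)+O(h)+O(\de)$ estimate in i) has no object to apply to.

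The second gap is in ii)--iii), and you yourself flag it as the ``main obstacle'' without resolving it. Identifying the characteristics of the limit merely as weak limits of $\tV^h_\de,\tD^h_\de$ against converging marginals is not enough: narrow convergence of the moment measures does not guarantee that the limit is a jump-free element of $\Pc^1$ whose drift and quadratic-variation densities coincide with those limits. The whole purpose of the mollification is to upgrade tightness to locally uniform convergence: $\rho^h_\de, m^h_\de, n^h_\de$ are smooth, bounded, and converge uniformly on compacts, and $\rho_\de$ is locally bounded below, so $b_h=m^h_\de/\rho^h_\de$ and $a_h=n^h_\de/\rho^h_\de$ converge locally uniformly; together with the kurtosis bound this is precisely hypothesis \eqref{DDD11} of the inhomogeneous Stroock--Varadhan Theorem \ref{ThStroockVaradhanExtendedInTime}, which then identifies $\P_\de$ as a diffusion with exactly these characteristics. (Also, tightness of $m^h,n^h$ as measures is not automatic from $\sup_h\I^h<\infty$; the paper proves it by a de la Vall\'ee Poussin argument combining the kurtosis bound with the uniform fourth moment of $\P^h_0$ via a telescoping sum.) Your part iii) is then fine as stated -- it is the definition of $\I^0$ plus lower semicontinuity of the perspective functional -- but it only becomes available once the identification step above is actually carried out.
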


\begin{proof}
 Appendix \ref{B22}.  Note that it is possible to apply this lemma due to Hypothesis \eqref{H-1} which ensures that the initial density's fourth moment is finite.
\end{proof}


\item[{\bf * Step 5: }] Lemma \ref{FXlemma} $i)$ and $ii)$  gives
$
\J(\tro^h_\de,\tm^h_\de,\tn^h_\de)   \le   \I^h( \P^h)    + O(\varepsilon) + O(\delta)  + O(h) 
$
and passing to the limit in $h$  using 
 Lemma \ref{FXlemma} $iii)$ and for a minimizer $\P^0$ of (\ref{SDKe}) we have 
\begin{equation}
\label{F11} 
    \I^0(\P^0) \le   \I^0( \P_\de)  \le   \liminf_{h\searrow 0}  \I^h( \P^h)  + O(\ee) + O(\delta) \,.
\end{equation}

\item[{\bf * Last Step:  }] Applying {\bf Step 5}  to a sequence of minimizers $(\P^h)_h$ of (\ref{SDKh}) 
and using \textbf{Step 3}, we get 
$
\I^0(\P^0) \le \lim_{\de \searrow 0}  \I^0(\P_\de) \le  \liminf_{h\searrow 0}  \I^h( \P^h) \le \limsup _{h\searrow 0}  \I^h( \P^h) \le \I^0(\P^0) 
$
which concludes the proof of Theorem \ref{maintheo} $i)$.
\end{enumerate}

\section{Dual formulation and Sinkhorn algorithm }
\label{duality}

In this section the support of $\P^h$ is  a finite product of compact subsets of $\R^d$:  $\Omega^h \cqq \bigotimes_{i=0}^{N} \Xc_i $. This restriction is consistent with the space discretisation and truncation of our implementation  (Section \ref{numerics}). 
In order to simplify the presentation and the notations we restrict to dimension $d=1$ and consider  a dependence of the payoff $F$ just on the drift $\V$,  generalisations to dependence in  $\D$  follow the same lines.  
We also drop all additional constraints on the coefficients and the 
Kurtosis constraints since it does not seems necessary to obtain 
convergence of the Sinkhorn algorithm in the next section and 
also to let $h$ go to 0 together with the space discretisation step. 
We present the duality without rigorous proofs that are left for further studies. 
Let us just mentioned that Multi-Marginal OT on a product of compact space 
is usually easier to deal with as in \cite{DimarinoG} or \cite{CarlierMM} for instance.  

 The simplified  discrete primal problem becomes: 
 \begin{equation} \label{primal}
  \inf_{  \begin{array}{l}   
   \P^h \in \Pc( \Omega^h)\,   
    \end{array}    }
     \E_{\P^h}  \left(    h\, \sum_{i=0}^{N-1} F( \V_i^h(X_i^h) ) \right)  
     + \Dist(\P^h_0,{\rho_0})   + \Dist({\P^h_1,\rho_1}) +  
     h \, \Ec(\P^h | \bP^h)  
\end{equation}
where  $\V^h_i(x_i)  \cqq \dfrac{1}{h} E_{\P^h_\tri} \left( X^h_{i+1}-x_i \right) $ 
is given as a function of $\P^h$ as in (\ref{SMOM}). 
 We also recall that the reference measure is defined by 
$\bP^h_\tri = \Nc(x_i+ h \V_i^h(x_i) , h \bD ) $ and $ \bP^h_0 = \rho_0$.

\subsection{Fenchel-Rockafellar duality} 
\label{frd} 

We remark that the drifts $\V_i^h$  can be written using 
local linear functions of $\P^h$ (abusing again notations and using $\P^h$
for the distribution and their densities):
\begin{equation} \nonumber
      \V_i^h(x_i) =  \dfrac{1}{h} \, \dfrac{ \E_{\P^h_{\m2i} (x_i,.)} (X^h_{i+1} - x_i) }{ \P_i^h(x_i)} 
\end{equation}
where  $\P^h_{\m2i} (x_i,.)  := \P^h_i \, \P^h_\tri$ is to be understood as the  measure on $\R_{t_{i+1}}$ obtained by freezing the first variable $x_i$ in the joint $\R_{t_i} \times \R_{t_{i+1}}$ probability  $\P^h_{\m2i} $.

The primal problem (\ref{primal}) can be rewritten 
\begin{equation} \nonumber
\inf_{   \displaystyle \left\{   
   \P^h \in \Pc( \Omega^h) \right\}  }
     \Fc(\Delta^\dag \P^h )  + \, h \, \Ec(\P^h | \bP^h)
\end{equation}
using the linear change of variable: 
\begin{equation} 
 \nonumber 
 \begin{array}{rl}
  \Delta^\dag \P^h   \cqq  & \{ ( \P_i^h)_{i=0}^{N} ,
 ( x_i \rightarrow  \dfrac{1}{h} \, \E_{\P^h_{\m2i} (x_i,.)} (X^h_{i+1} - x_i)    )_{i=0}^{N-1}  \} ,  
 \end{array}
 \end{equation} 
and modified functional: 
\begin{equation} 
 \nonumber 
 \begin{array}{rl}
 \Fc( (m_{0,i})_{i=0}^{N} ,
 (m_{1,i})_{i=0}^{N-1} ) \cqq   & \Dist({m_{0,0}},{\rho_0})   + \Dist({m_{0,N}},{\rho_1})   +  h\, \sum_{i=0}^{N-1}  \E_{m_{0,i}}(F( \dfrac{m_{1,i}}{ m_{0,i}} )) \,.
 \end{array}
 \end{equation}

 In this section (again for simplicity) we use hard constraints for the loss function on  initial/final marginals. It corresponds to 
 the characteristic function $\Dist(\mu,\rho) := 0$ if $ \mu = \rho$ and $+\infty$ otherwise.   The (jointly convex)  perspective functions $ (m_0,m_1) \rightarrow  h \, \E_{m_{0}}(F( \pfrac{m_{1}}{ m_{0}} )) $ are naturally  extended to $+\infty$ if $m_0 \le 0 $ and $0$ if $ m_1= m_0 = 0$.  The $ .^\dag$ notation denotes that  $ \Delta^\dag$ is the adjoint of 
the linear operator  (\ref{DELTA00}).  

 Fenchel Rockafellar duality yields the  equivalent dual problem:  
\begin{equation} \label{DKMMOT} 
\sup_{ \displaystyle \Phi^h := (\phi^h_{0,i})_{i=0}^{N} ,
 (\phi^h_{1,i})_{i=0}^{N-1} }
- \Fc^*( - \Phi) - h \, \E_{\bP^h}( \exp{\dfrac{\Delta \,   \Phi^h }{h} }  - 1 )\, . 
\end{equation}
The dual variable is a 
vector of continuous functions on the spaces $(\Xc_i)_{i = 0}^N$ $\Phi^h = (\phi^h_{0,i})_{i=0}^{N} ,
 (\phi^h_{1,i})_{i=0}^{N-1}$. 
The second term in (\ref{DKMMOT}) is the Legendre Fenchel transform  of the relative entropy and 
  \begin{equation} 
 \label{DELTA00} 
   \Delta \, \Phi    =    \bigoplus_{i=0}^{N-1}  {   \phi}_{0,i}  + \dfrac{1}{h} ( \zi ) \,  { \phi}_{1,i}  +   \phi_{0,N}\,.
\end{equation}
The Legendre Fenchel transform 
$\Fc^*$  of $\Fc$ is  explicitly given, using the separability in space and in ``times'' $i$ by: 
  \begin{equation} 
 \nonumber
 \Fc^*( \Phi) = \E_{\rho_0}(   \phi_{0,0}  + h\, F^*( \dfrac{\phi_{1,0}}{h}  )) +  \sum_{i=1}^{N-1} \chi_0(   \phi_{0,i}  + h\, F^*( \dfrac{\phi_{1,i} }{h} )) + \E_{\rho_1}( \phi_{0,N})
 \end{equation}
 where $\chi_0(f):=0$ if $ f$ is the null function in $C(\Xc) $ and $+\infty$ else.
It is convenient to eliminate the $(\phi_{0,i})_{i=1}^{N-1}$  variables from $\Phi$  (we keep the same notations) and simplify  problem  (\ref{DKMMOT}) using 
 \begin{equation} 
 \label{DELTA1} 
   \Delta \, \Phi    =     \phi_{0,0} +  \dfrac{1}{h}   (x_1-x_0)    \,  { \phi}_{1,0}    + \bigoplus_{i=1}^{N-1}  { h\,  F^*( - \dfrac{1}{h} \phi_{1,i} )  +  \dfrac{1}{h}   ( \zi)    \,  { \phi}_{1,i}  }   + \phi_{0,N}
\end{equation}
and 
  \begin{equation} 
 \label{FD1} 
 \Fc^*( \Phi) = \E_{\rho_0}(   \phi_{0,0}  + h\, F^*( \dfrac{\phi_{1,0}}{h}  )) + \E_{\rho_1}(  \phi_{0,N})\,.
\end{equation}
Applying classical Fenchel-Rockafellar duality (\cite{CarlierB} theorem 6.3)
\begin{proposition}[Solutions of \eqref{DKMMOT}]
  \label{pro21}
  \begin{itemize}

    \item[i)] Problem \eqref{DKMMOT} has a unique {\em Markovian} solution which can
      be expressed, in terms of   $\Phi^h = (\phi^h_{0,i=0} ,\phi^h_{0,i=N}), (\phi^h_{1,i})_{i=0}^{N-1}$ a 
      vector of continuous functions on the spaces $(\R_{t_i})$, the solution of the dual problem  \eqref{DKMMOT}-\eqref{DELTA1}-\eqref{FD1}:     
      \[
      \P^h  = \operatorname{exp}{ \dfrac{1}{h} \Delta\, \Phi^h } \, \bP  \,, 
      \]
\[    \Delta \, \Phi    =     \phi_{0,0} +  \dfrac{1}{h}   (x_1-x_0)    \,  { \phi}_{1,0}    + \bigoplus_{i=1}^{N-1}  { h\,  F^*( - \dfrac{1}{h} \phi_{1,i} )  +  \dfrac{1}{h}   ( \zi)    \,  { \phi}_{1,i}  }   + \phi_{0,N} \,.
      \]
    \item[ii)] $\P^h$   satisfies, for all $i$,  the factorisation
$\P^h$ can be tensorized with densities: 
\[
 \P^h  = \rho_0 \,  \Pi_{i=0}^{N-1} \P_{\tri}^h  \quad \quad  \P_{\tri}^h  = \dfrac{\P^h_{\m2i}}{\P^h_i} 
\]
and 
$\forall  \, i$
\begin{equation} 
\label{Kern}
\P^h_{\m2i}    =  \u_{i} \, \exp{ \dfrac{1}{h^2}   ( \zi)    \,  { \phi}_{1,i} }   \,  \bP^h_{\m2i}  \,  \v_{i+1} \,.
\end{equation} 
Each $(\u_i)$ and $(\v_i)$ are  functions defined recursively  $\u pward$ and $\v ownward$ using $ \Phi^h$ and $\bP^h$ by
\begin{equation}
\label{SNOTu}
    \left\{
 \begin{array}{l}
   \u_0 =  \exp{\dfrac{1}{h} \,\phi_{0,0}}  , \\[8pt]
   \u_{i+1} =  \exp{  F^*( - \dfrac{1}{h} \phi_{1,i+1} )}  \, \dint \u_i  \,  \exp{   \dfrac{1}{h^2} ( \zi)    \,  { \phi}_{1,i} }  \bP^h_{\m2i} \, d{x_i} \,  , \\[8pt]
  \mbox{ for  $i =0,N-2$}   .
   \end{array}
    \right.
\end{equation}
\begin{equation}
\label{SNOTv}
    \left\{
 \begin{array}{l}
   \v_N= \exp{\dfrac{1}{h} \, \phi_{0,N} } , \\[8pt] 
   \v_{i} =  \exp{ F^*( - \dfrac{1}{h} \phi_{1,i} ) } \, \dint \bP^h_{\m2i} \, \exp{ \dfrac{1}{h^2}   ( \zi)    \,  { \phi}_{1,i}} \,    \v_{i+1} \, dx_{i+1}  , \\[8pt] \mbox{for  $i =N-1,1$}  .
   \end{array}
    \right.
\end{equation}
{\em Please recall    that all variable subscripted by $i$ are to be understood as function of $x_i$}. 
\end{itemize} 
\end{proposition}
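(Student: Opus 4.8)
The plan is to derive the dual problem \eqref{DKMMOT}--\eqref{DELTA1}--\eqref{FD1} together with the representation formulas (i)--(ii) by applying Fenchel--Rockafellar duality (\cite{CarlierB}, Theorem~6.3) to the primal \eqref{primal} written compactly as $\inf_{\P^h\in\Pc(\Omega^h)}\Fc(\Delta^\dag\P^h)+h\,\Ec(\P^h\mid\bP^h)$, then to identify the two Legendre transforms, read off the primal--dual optimality relation, and finally unfold the additive structure of $\Delta\Phi^h$ into the tensorised recursion \eqref{Kern}--\eqref{SNOTv}.

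First I would fix the functional frame: on the compact product $\Omega^h=\bigotimes_{i=0}^{N}\Xc_i$ one pairs measures with $C(\Omega^h)$ and, on the ``moment side'', uses $\prod_i C(\Xc_i)$ to carry the linear variables $(m_{0,i},m_{1,i})$, so that $\Delta^\dag$ is linear and bounded with adjoint $\Delta$ given by \eqref{DELTA00}. The functional $\Fc$ is convex, proper and lsc --- a sum of the hard marginal indicators $\Dist(\cdot,\rho_0),\Dist(\cdot,\rho_1)$ and of the jointly convex lsc perspective functions $(m_{0,i},m_{1,i})\mapsto h\,\E_{m_{0,i}}(F(m_{1,i}/m_{0,i}))$, extended as in Section~\ref{frd} --- while $\P\mapsto h\,\Ec(\P\mid\bP^h)$ is convex, lsc and \emph{strictly} convex on its domain. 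I would then verify the qualification hypothesis of Theorem~6.3 by exhibiting an admissible $\P^h$ (a reweighting of $\bP^h$ whose time-$0$ and time-$N$ marginals are $\rho_0,\rho_1$) at which $\Fc\circ\Delta^\dag$ is finite and $h\,\Ec(\cdot\mid\bP^h)$ continuous for the relevant topology; this yields strong duality with no gap and existence of a dual maximiser $\Phi^h$, attainable among continuous potentials because the $\Xc_i$ are compact and one can fix the additive-constant gauge of $\Phi^h$.

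Next I would compute the conjugates. $\Fc^*$ separates over space and over the time index $i$: the hard constraints contribute the linear pairings $\E_{\rho_0}(\phi_{0,0})$ and $\E_{\rho_1}(\phi_{0,N})$; the intermediate marginals carry no data, which forces $\phi_{0,i}=h\,F^*(-\phi_{1,i}/h)$ for $1\le i\le N-1$, exactly the elimination producing \eqref{FD1} and the $F^*$-terms in \eqref{DELTA1}; and the perspective term at $i=0$ has conjugate $h\,F^*(\cdot/h)$ paired against $\rho_0$. The conjugate of the scaled relative entropy is the classical exponential (log-partition) functional $\psi\mapsto h\,\E_{\bP^h}(\exp{\psi/h}-1)$ evaluated at $\psi=\Delta\Phi^h$, which reproduces \eqref{DKMMOT} with $\Delta\Phi^h$ as in \eqref{DELTA1}. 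The first-order optimality condition for the entropic term recovers the primal optimum from the gradient of its conjugate, $d\P^h/d\bP^h=\exp{\tfrac1h\Delta\Phi^h}$, which is part~(i), and strict convexity of $\Ec(\cdot\mid\bP^h)$ gives uniqueness of this $\P^h$.

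For part~(ii), I would use that $\bP^h$ is itself a Markov chain, $\bP^h=\rho_0\prod_{i=0}^{N-1}\bP^h_{\tri}$, and that by \eqref{DELTA1} the exponent $\tfrac1h\Delta\Phi^h$ is a finite sum of terms each depending only on a consecutive pair $(x_i,x_{i+1})$ --- the couplings $\tfrac1{h^2}(x_{i+1}-x_i)\,\phi_{1,i}(x_i)$ together with the $x_i$-only pieces $F^*(-\tfrac1h\phi_{1,i})$, $\phi_{0,0}$ and $\phi_{0,N}$. Exponentiating turns $d\P^h/d\bP^h$ into a product of pairwise factors, which preserves the Markov property, whence $\P^h=\rho_0\prod_{i=0}^{N-1}\P^h_{\tri}$ with $\P^h_{\tri}=\P^h_{\m2i}/\P^h_i$; then \eqref{Kern} follows by collecting into $\u_i$ and $\v_{i+1}$ all pairwise factors lying to the left, resp. to the right, of the bond $(i,i+1)$, and the recursions \eqref{SNOTu}--\eqref{SNOTv} are the upward (marginalising out $x_i$) and downward (marginalising out $x_{i+1}$) partial integrations of these accumulated products against $\bP^h$. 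I expect the genuine obstacle to be the first step: checking the constraint qualification of \cite{CarlierB} Theorem~6.3 and the attainment of the dual supremum on the infinite-dimensional space $\prod_i C(\Xc_i)$ in the presence of hard marginal constraints (where the potentials are determined only up to additive constants that must be gauged); the conjugate computations, the exponential primal--dual formula and the unfolding into \eqref{Kern}--\eqref{SNOTv} are by contrast essentially bookkeeping once the abstract duality is in place, which is consistent with the paper deferring the fully rigorous treatment.
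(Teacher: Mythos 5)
Your proposal takes essentially the same route as the paper: Proposition \ref{pro21} is stated there as a direct application of Fenchel--Rockafellar duality (\cite{CarlierB}, Theorem 6.3) built on the change of variables $\Delta^\dag \P^h$ and the conjugate computations of Section \ref{frd}, followed by the exponential primal--dual relation and the Markov factorisation into the recursions \eqref{SNOTu}--\eqref{SNOTv}, with the rigorous treatment (constraint qualification, dual attainment, gauge of the potentials) explicitly left for further studies. Your sketch matches this plan step by step and correctly flags that deferred qualification/attainment issue as the only genuine obstacle.
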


\begin{remark}

The modifications to (\ref{DELTA1}) and (\ref{FD1}) in the 
more general case of time and diffusion dependent functions 
$(t_i,\V_i,\D_i) \rightarrow F_i(\V_i,\D_i)$  are simply 
$\Phi^h = (\phi^h_{0,i=0} ,\phi^h_{0,i=N}), (\phi^h_{1,i})_{i=0}^{N-1} , (\phi^h_{2,i})_{i=0}^{N-1}$ the additional potentials $\phi^h_{2,i}$ are dual to 
$\P^h_i\,\D^h_i$  and 

\begin{align} 
 \nonumber
   \Delta \, \Phi   =  &       \phi_{0,0} +  \dfrac{1}{h}   (x_1-x_0)    \,  { \phi}_{1,0}  
    +  \dfrac{1}{h}   (x_1-x_0)^2    \,  { \phi}_{2,0}   +  \\ \nonumber
    &  \bigoplus_{i=1}^{N-1}  { h\,  F_i^*( - \dfrac{1}{h} \phi_{1,i} , - \dfrac{1}{h} \phi_{2,i} )  +  \dfrac{1}{h}   ( \zi)    \,  { \phi}_{1,i}  }   + \dfrac{1}{h}   ( \zi)^2    \,  { \phi}_{2,i}    \\ \nonumber  &  + \phi_{0,N}
\end{align}
and \begin{equation} \nonumber
 \Fc^*( \Phi) = \Dist^*_{\rho_0}(   \phi_{0,0}  + h\, F_0^*( \dfrac{\phi_{1,0}}{h} , \dfrac{\phi_{2,0}}{h} )) + \Dist^*_{\rho_1}(  \phi_{0,N})\,
\end{equation}
where
$\Dist^*_{\rho_i}$ is the Legendre Fenchel Transform of the (convex)  marginal 
fidelity term $\rho \rightarrow \Dist(\rho,\rho_i)$.

\end{remark}

\subsection{Multi-Marginal Sinkhorn Algorithm}
\label{secsink} 
We are now working with a fixed $h$ and drop the dependence in the notations for clarity. 

The simplest interpretation of Sinkhorn Algorithm  (see \cite{PeyreB}) is to perform an iterative 
coordinate-wise (in the components of  $\Phi$) ascent to maximize 
 the concave 
dual problem (\ref{DKMMOT}-\ref{DELTA1}-\ref{FD1}): 
\[
\sup_{ \displaystyle \Phi := (\phi_{0,0},
 (\phi_{1,i})_{i=0}^{N-1}, \phi_{0,N} )  }
- \Fc^*( - \Phi) - h \, \E_{\bP}( \exp{\dfrac{1}{h} \Delta \,   \Phi }  - 1 ) . 
\]
 One cycle of dual variable optimisation will be indexed by $k$ and the potentials 
 updated ``\`a la Gauss Seidel'' in the inner loop 
over the $N+2$ functions (Kantorovich potentials) in $\Phi$ (the order is not  important).
Notations are a little  bit more involved than in  the classical two marginals problem, to ease the presentation 
we set: 
\begin{equation}
\label{UP}
\left\{
\begin{array}{l}
\Phi^{k,0} = (\phi_{0,0},
 (\phi^k_{1,i})_{i=0}^{N-1}, \phi^k_{0,N} )  , \\[10pt]
\Phi^{k,i+1} = (\phi^{k+1}_{0,0},
 (\phi^{k+1}_{1,0} \ldots \phi^{k+1}_{1,i-1}, \phi_{1,i}, \phi^k_{1,i+1} \ldots \phi^k_{1,N}) , \phi^k_{0,N} )  , \quad  i= 0\ldots N-1 ,\\[10pt]
 \Phi^{k,N+2} = (\phi^{k+1}_{0,0},
 (\phi^{k+1}_{1,i})_{i=0}^{N-1}, \phi_{0,N} ) .
 \end{array} 
 \right.
\end{equation}
One Sinkhorn cycle $k \rightarrow k+1$, the updates from $\Phi^{k,0}$ to $
\Phi^{k+1,0}$  can be written in compact form as a loop on its components:  
\begin{equation} 
\label{SIA}
\left\{
\begin{array}{l}
\phi_{0,0}^{k+1}  = \arg\sup_{\phi_{0,0}} 
- \Fc^*( - \Phi^{k,0}) - h \, \E_{\bP}( \exp{\dfrac{1}{h} \Delta \,   \Phi^{k,0} }    - 1 ) , \\[10pt]
  \phi_{1,j+1}^{k+1}  = \arg\sup_{\phi_{1,j}}  
- \Fc^*( - \Phi^{k,i+1}) - h \, \E_{\bP}( \exp{\dfrac{1}{h} \Delta \,   \Phi^{k,i+1}  }    - 1 ) , \quad  i= 0\ldots N-1 ,\\[10pt]
\phi_{0,N}^{k+1}  = \arg\sup_{\phi_{0,N}} 
- \Fc^*( - \Phi^{k,N+1}) - h \, \E_{\bP}( \exp{\dfrac{1}{h} \Delta \,   \Phi^{k,N+2}  }    - 1 ).
 \end{array} 
 \right.
\end{equation} 
Each of these maximization problems is stricly concave and 
sufficiently smooth (depending on $F^\ast$). They are also separable 
in space and (\ref{SIA}) amounts to solve in sequence 
the following set of equations ($\partial F^*$ is the Frechet derivative, a gradient in finite dimention):
\begin{equation}
  \nonumber
      \left\{
         \begin{array}{l}
 \rho_0 =  \exp{\dfrac{1}{h }\phi_{0,0}^{k+1} (x_0) } \dint \exp{ \dfrac{1}{h^2}   ( x_1-x_0)    \,  { \phi}^k_{1,0} }   \,  \bP^h_{0,1}  \,  \v_{1}^k  \, dx_1    \\[10pt]
 \rho_0 \, \partial F^*( -\dfrac{1}{h }\phi^{k+1}_{1,0}) =  \exp{\dfrac{1}{h }\phi_{0,0}^{k+1} } \dint \dfrac{1}{h }(x_1- x_0 )  \,\exp{ \dfrac{1}{h^2}   ( x_1 - x_0 )    \,  { \phi}^{k+1}_{1,0} }   \,  \bP^h_{0,1}  \,  \v_{1}^k  \, dx_1    \\[10pt]
0 = \u_i^{k+1} \, \dint  \left( \partial F^*( -\dfrac{1}{h }\phi^{k+1}_{1,i}) + \dfrac{1}{h }(\zi)  \right) \,\exp{ F^*( -\dfrac{1}{h }\phi^{k+1}_{1,i}) + \dfrac{1}{h^2}   ( \zi)    \,  { \phi}^{k+1}_{1,0} }   \\[10pt]
 \quad \quad \quad \quad \quad \quad    \quad \quad \quad  \, \ldots  \bP^h_{\m2i}  \,  \v_{i+1}^k  \, dx_{i+1}   \quad \quad \quad \mbox{  for all $i=1,N-1$} \\[10pt]
    \rho_1  =   \u_{N-1}^{k+1} \,  \exp{ \dfrac{1}{h } \phi_{0,N}^{k+1}}    
      \end{array}
      \right. 
  \end{equation}
{Please recall  that all variable subscripted by $i$ are to be understood as function of $x_i$. 
 Each line is to be understood as point-wise in space.} 
The functions $(\u_i^k,\v_i^k)$  are defined by the recursions (\ref{SNOTu}-\ref{SNOTv}) applied  to the iterative Gauss-Seidel update of the potentials: 
(\ref{UP})
\begin{equation}
\label{SNOTuk}
    \left\{
 \begin{array}{l}
   \u^k_0 =  \exp{\dfrac{1}{h} \,\phi^k_{0,0}}   \\[8pt]
   \u^k_{i+1} =  \exp{  F^*( - \dfrac{1}{h} \phi^k_{1,i+1} )} \, \dint \u^k_i   \exp{   \dfrac{1}{h^2} ( \zi)    \,  { \phi}^k_{1,i} }  \bP^h_{\m2i}  \, dx_{i} \,    \\[8pt]
  \mbox{ for  $i =0,N-2$}   
   \end{array}
    \right.
\end{equation}
\begin{equation}
\label{SNOTvk}
    \left\{
 \begin{array}{l}
   \v_N^k= \exp{\dfrac{1}{h} \, \phi^k_{0,N} }  \\[8pt] 
   \v^k_{i} =  \exp{ F^*( - \dfrac{1}{h} \phi^k_{1,i} ) } \, \dint \bP^h_{\m2i} \, \exp{ \dfrac{1}{h^2}   ( \zi)    \,  { \phi}^k_{1,i}}     \v^k_{i+1} \, dx_{i+1}   \\[8pt] \mbox{for  $i =N-1,1$} 
   \end{array}
    \right.
\end{equation}
 The numerical study in Section \ref{numerics} gives experimental convergence curves. In the case of finite dimension in space, i.e,
$\P^h$ as support on a truncated grid in space. The analysis of 
 the convergence (in $k$) of this algorithm  
should rely on a multi-marginal extension of \cite{CominettiA}, see also \cite{CarlierMM} or \cite{DimarinoG} and the references therein.  Note however that they apply to  ``standard'' multi-marginal constraints while we use  a convex payoff of the moments of $\P^h$.
This problem is closer in spirit to the ``weak'' OT \cite{weak} setting or rather the  entropic regularization of 
a weak OT problem.  

\subsection{Numerical experiments} 
\label{numerics}


In all experiments, we use the function $F = \gamma \|\cdot\|^2$ with a large parameter $\gamma$ (typically $1.E06$), this is to enforce a soft martingale constraint (no drift).
The space discretisation is chosen as 
$ dx = (10\, \bs)/K \, \sqrt{h} $ the parameter $K$ corresponds to the number of 
``active'' points in the reference measure Gaussian kernel (typically $K=64$). 
Note that,  we extend the test cases to reference measures $\bP^h$ with the variable in time  $\bs_i = \sqrt{\bD_i}$. The finest computed 
discretisation in time corresponds to $1/h = N_t = 513$ but we will also provide 
convergence curves in $N_t$. 

The first test case, see figure  \ref{expe2},  captures a diffusion between 
Gaussian  source and target measures  in convex order.  The  means are  $\mu = 0.5$ and respective standard deviation $\sigma_0 = 0.05$ and $\sigma_1 = 0.1$ 
 (Figure  \ref{expe2} (a)). The  reference measure volatility  (b)  is constant and satisfies $0.009 =\bs = \sqrt{\bD} > \sqrt{\sigma_1^2 - \sigma_0^2} =0.0075$. 
 It is over-diffusive with respect to the target measure and  
not an admissible solution for the problem.  
Figure  \ref{expe2} (e) shows the method recovers the constant $\sigma = 0.0075$ solution.
The volatility drops at the boundary because we have truncated the domain and strongly penalized the drift  (f), the density is close to 0 there, see (c) for the curve in time of marginal densities. 
 Figure  \ref{expe2} (h) shows that the scaled relative entropy and discrete specific entropy get closer as $N_T$ the number of timesteps increase. 
 Figure  \ref{expe2} (g)  confirms that under our choice of truncation and space discretisation the cost of the algorithm is of order $O(N_T^{3/2})$. 
  
 The second  test case, see figure  \ref{expe4} is similar to  figure  \ref{expe2} but the target measure is a mixture of Gaussians (a).  
 $    \rho_1(x) = p \,\mathcal{N}(x; 0.5, 0.1) + \frac{1-p}{2} \, \mathcal{N}(x; 0.5 - d_1, 0.05) + \frac{1-p}{2} \mathcal{N}(x; 0.5 + d_1, 0.05) $
 ($p=0.6$).  The optimal volatility surface 
 The results are consistent with those of the first test case, we have a nice convergence in 
$N_T$ for the entropies (h).  The volatility surface (e) is not  simple. 

In the third test case, figure \ref{expe5}, the 
setting is similar to figure \ref{expe4} except for the reference volatility which is discontinuous in time  with a drop around time $0.5$:  $    \bs =         0.03$  if $ |t - 0.5| \le 0.2 $ and 
 $      0.01 $ else.  Again results are consistent with the previous test cases.  We can also observe the regularizing effect of the specific entropy on (e) (which is asymptotically close to a 
 weighted $\Lc^2$ norm). 

For the final test figure \ref{expe8}, the solution cannot be a martingale, the source and targets (a) are not in convex order.
$  \rho_0(x) = \mathcal{N}(x; 0.5, 0.1)  \mbox{ if } x \leq 0.5$ and $  \mathcal{N}(x; 0.5, 0.3) \mbox{ if } x > 0.5 $, 
$  \rho_1(x) = \mathcal{N}(x; 0.5, 0.5)  \mbox{ if } x \leq 0.5$ and $  \mathcal{N}(x; 0.5, 0.2) \mbox{ if } x > 0.5 $. 
The optimal solutions (c) (e) (f) generate diffusion where possible ($x< 0.5$) and use the drift in the non-convex order side ($x >0.5$).

\begin{figure}[ht]
    \centering
    \begin{subfigure}[t]{0.45\textwidth}
        \centering
        \includegraphics[width=\textwidth]{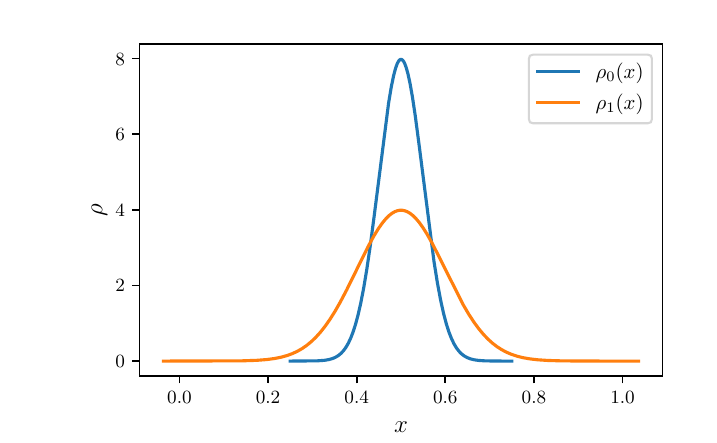}
        \caption{Source and target distributions}
        \label{fig:gauss_to_gauss_higher_source_target_distributions}
    \end{subfigure}
    \begin{subfigure}[t]{0.45\textwidth}
        \centering
        \includegraphics[width=\textwidth]{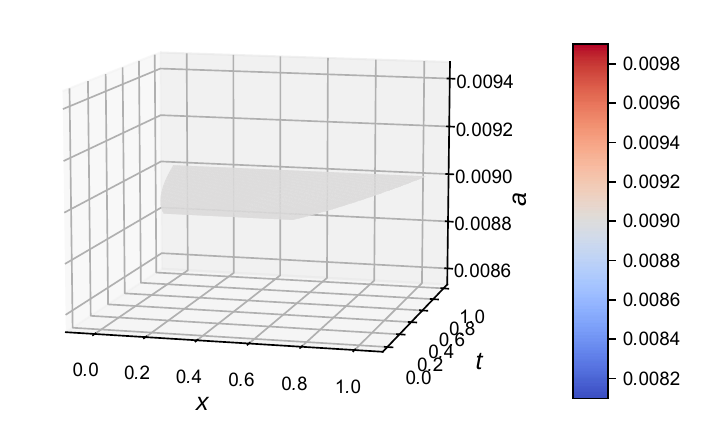}
        \caption{Reference measure volatility surface}
        \label{fig:gauss_to_gauss_higher_ref_mes_vol_surf}
    \end{subfigure}

    \begin{subfigure}[t]{0.45\textwidth}
        \centering
        \includegraphics[width=\textwidth]{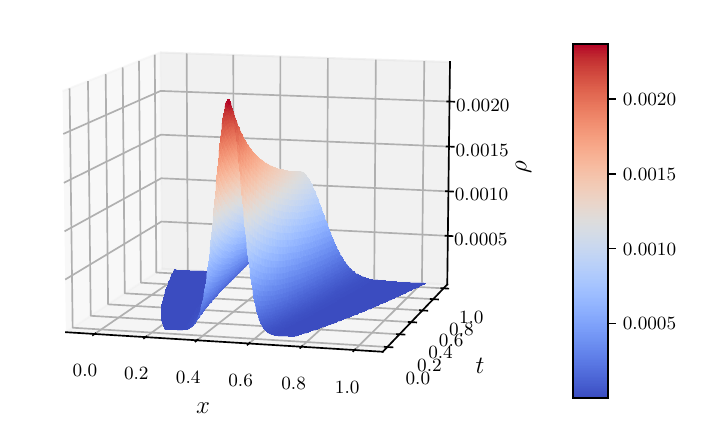}
        \caption{Marginal surface}
        \label{fig:gauss_to_gauss_higher_rel_marg_surf}
    \end{subfigure}
    \begin{subfigure}[t]{0.45\textwidth}
        \centering
        \includegraphics[width=\textwidth]{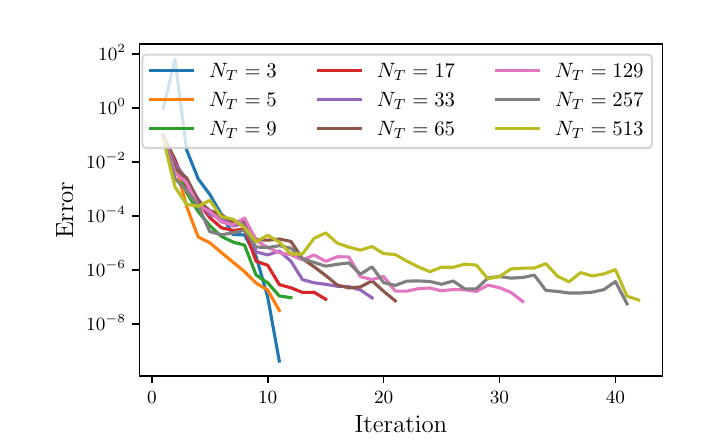}
        \caption{Sinkhorn convergence of the relative $L^\infty$ error}
        \label{fig:gauss_to_gauss_higher_rel_linf_conv}
    \end{subfigure}

    \begin{subfigure}[t]{0.45\textwidth}
        \centering
        \includegraphics[width=\textwidth]{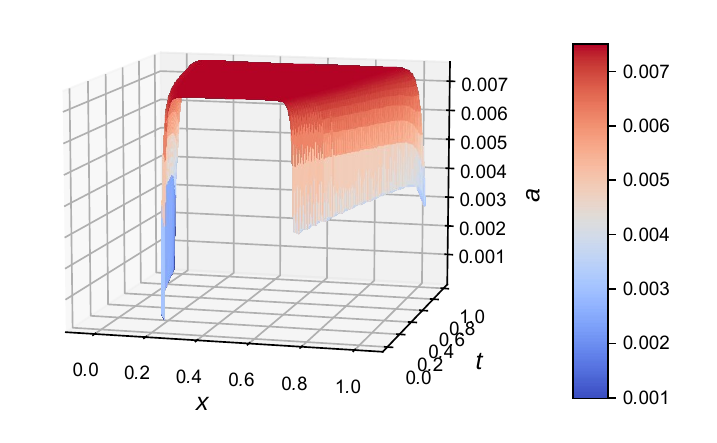}
        \caption{Final volatility surface}
        \label{fig:gauss_to_gauss_higher_vol_surf}
    \end{subfigure}
    \begin{subfigure}[t]{0.45\textwidth}
        \centering
        \includegraphics[width=\textwidth]{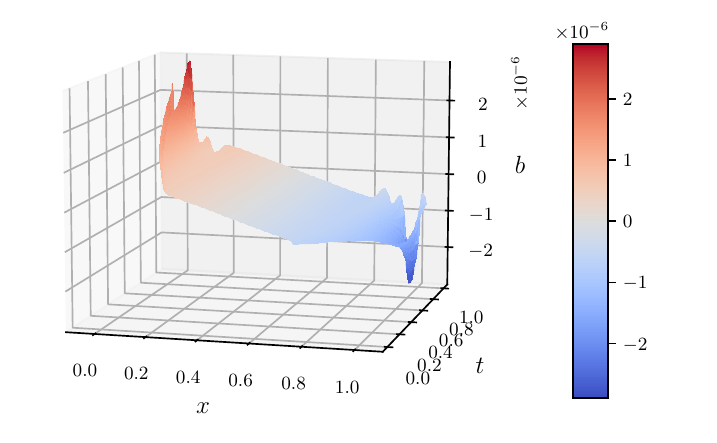}
        \caption{Final drift surface}
        \label{fig:gauss_to_gauss_higher_drift_surf}
    \end{subfigure}

    \begin{subfigure}[t]{0.45\textwidth}
        \centering
        \includegraphics[width=\textwidth]{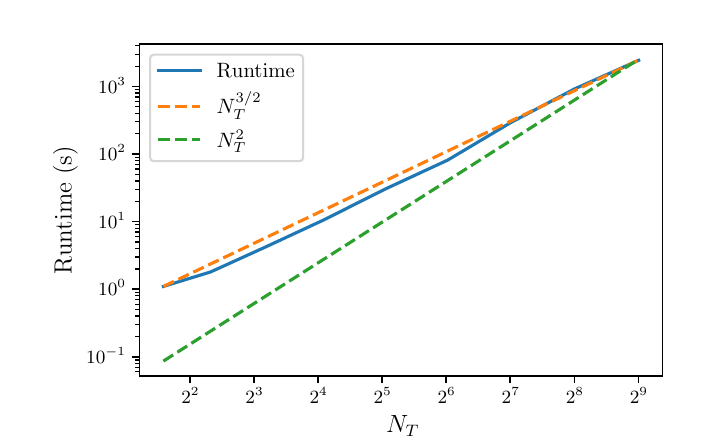}
        \caption{Runtime curve}
        \label{fig:gauss_to_gauss_higher_rt_curves}
    \end{subfigure}
    \begin{subfigure}[t]{0.45\textwidth}
        \centering
        \includegraphics[width=\textwidth]{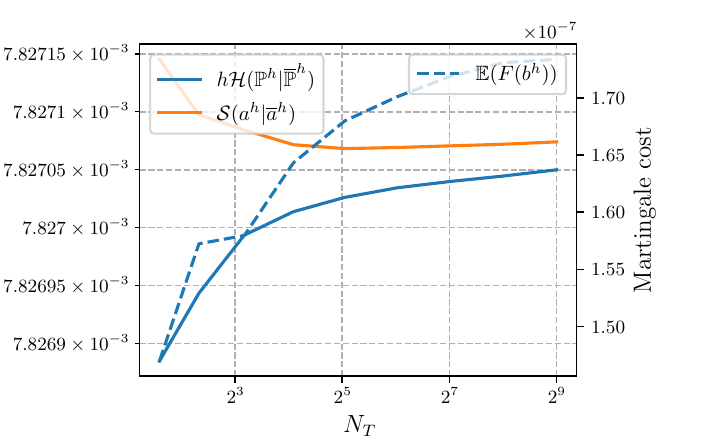}
        \caption{Convergence of the entropy and specific entropy}
        \label{fig:gauss_to_gauss_higher_conv_ent_specent}
    \end{subfigure}
    \caption{Gaussian to gaussian with overdiffusive  reference volatiliy}
    \label{expe2}

\end{figure}
 


\begin{figure}[ht]
    \centering
    \begin{subfigure}[t]{0.45\textwidth}
        \centering
        \includegraphics[width=\textwidth]{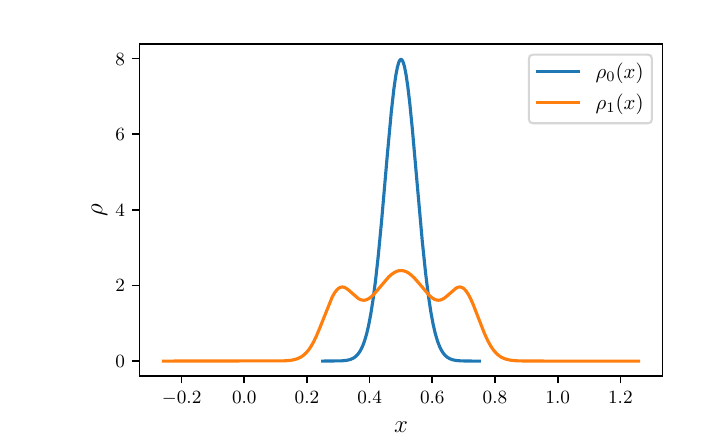}
        \caption{Source and target distributions}
        \label{fig:gauss_to_sumgauss_source_target_distributions}
    \end{subfigure}
    \begin{subfigure}[t]{0.45\textwidth}
        \centering
        \includegraphics[width=\textwidth]{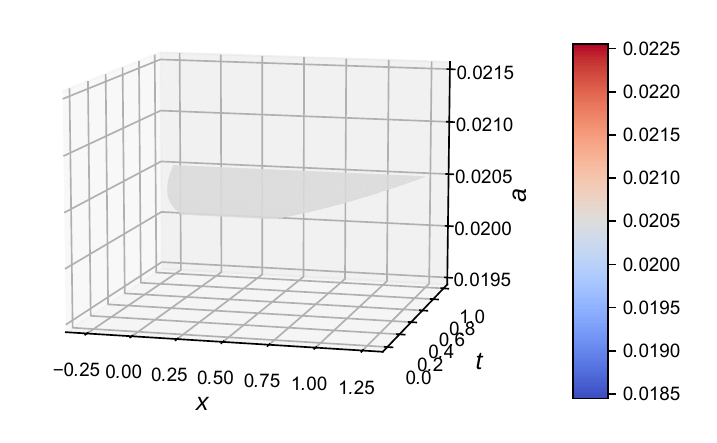}
        \caption{Reference measure volatility surface}
        \label{fig:gauss_to_sumgauss_ref_mes_vol_surf}
    \end{subfigure}

    \begin{subfigure}[t]{0.45\textwidth}
        \centering
        \includegraphics[width=\textwidth]{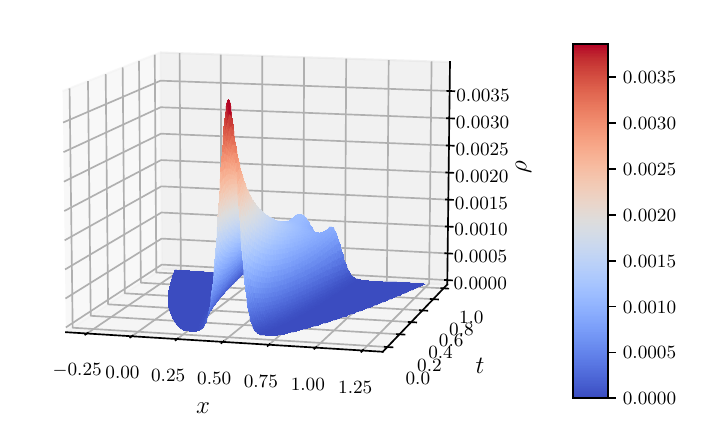}
        \caption{Marginal surface}
        \label{fig:gauss_to_sumgauss_rel_marg_surf}
    \end{subfigure}
    \begin{subfigure}[t]{0.45\textwidth}
        \centering
        \includegraphics[width=\textwidth]{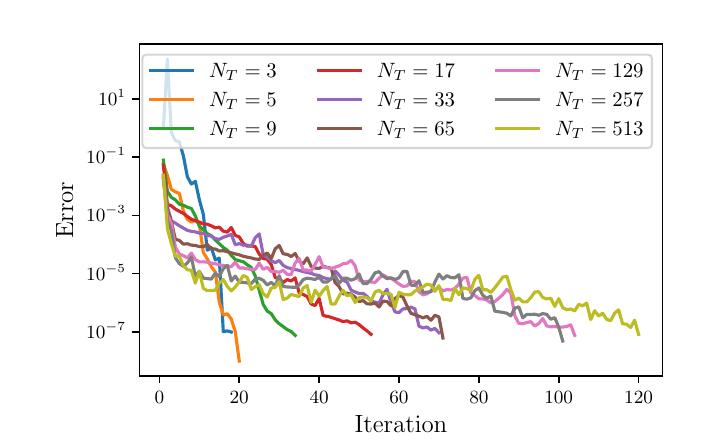}
        \caption{Sinkhorn onvergence of the relative $L^\infty$ error}
        \label{fig:gauss_to_sumgauss_rel_linf_conv}
    \end{subfigure}

    \begin{subfigure}[t]{0.45\textwidth}
        \centering
        \includegraphics[width=\textwidth]{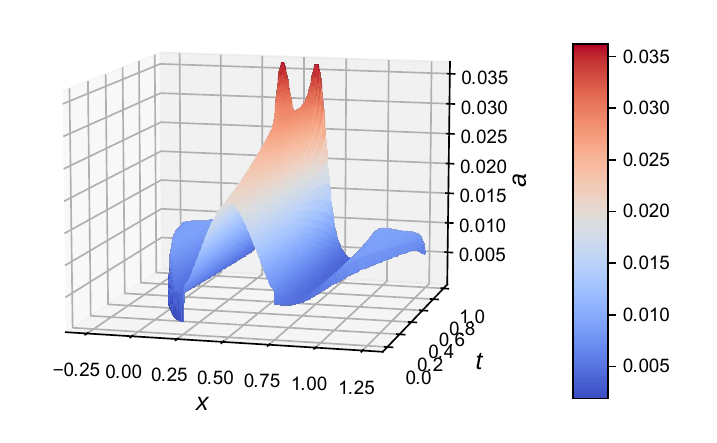}
        \caption{Final volatility surface}
        \label{fig:gauss_to_sumgauss_vol_surf}
    \end{subfigure}
    \begin{subfigure}[t]{0.45\textwidth}
        \centering
        \includegraphics[width=\textwidth]{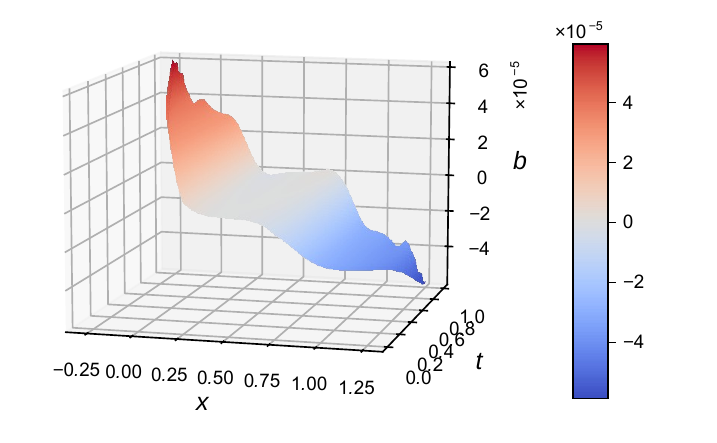}
        \caption{Final drift surface}
        \label{fig:gauss_to_sumgauss_drift_surf}
    \end{subfigure}

    \begin{subfigure}[t]{0.45\textwidth}
        \centering
        \includegraphics[width=\textwidth]{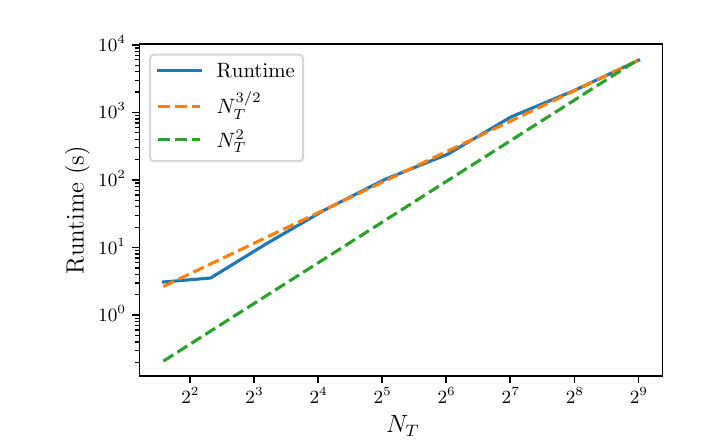}
        \caption{Runtime curve}
        \label{fig:gauss_to_sumgauss_rt_curves}
    \end{subfigure}
    \begin{subfigure}[t]{0.45\textwidth}
        \centering
        \includegraphics[width=\textwidth]{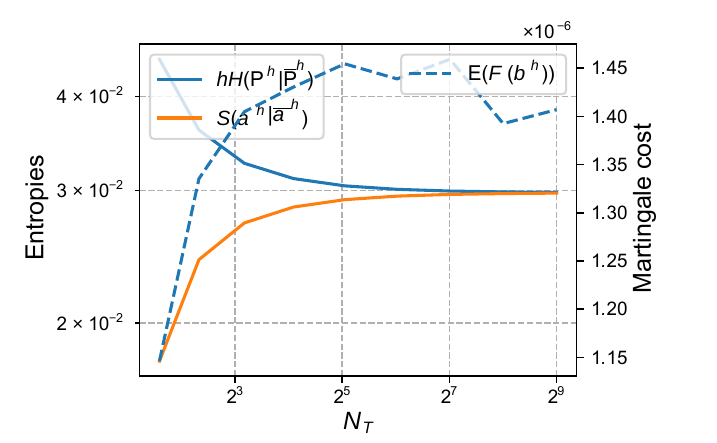}
        \caption{Convergence of the entropy and specific entropy}
        \label{fig:gauss_to_sumgauss_conv_ent_specent}
    \end{subfigure}
    \caption{A Gaussian to a mixture of  gaussian}
    \label{expe4} 
\end{figure} 
\begin{figure}[ht]
    \centering
    \begin{subfigure}[t]{0.45\textwidth}
        \centering
        \includegraphics[width=\textwidth]{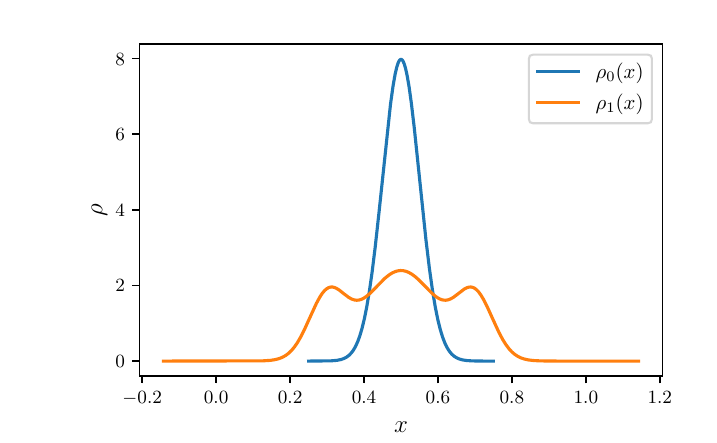}
        \caption{Source and target distributions}
        \label{fig:gauss_to_sumgauss_volup_source_target_distributions}
    \end{subfigure}
    \begin{subfigure}[t]{0.45\textwidth}
        \centering
        \includegraphics[width=\textwidth]{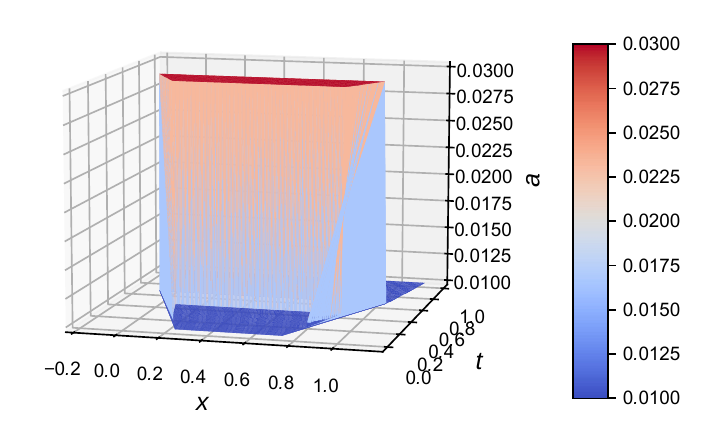}
        \caption{Reference measure volatility surface}
        \label{fig:gauss_to_sumgauss_volup_ref_mes_vol_surf}
    \end{subfigure}

    \begin{subfigure}[t]{0.45\textwidth}
        \centering
        \includegraphics[width=\textwidth]{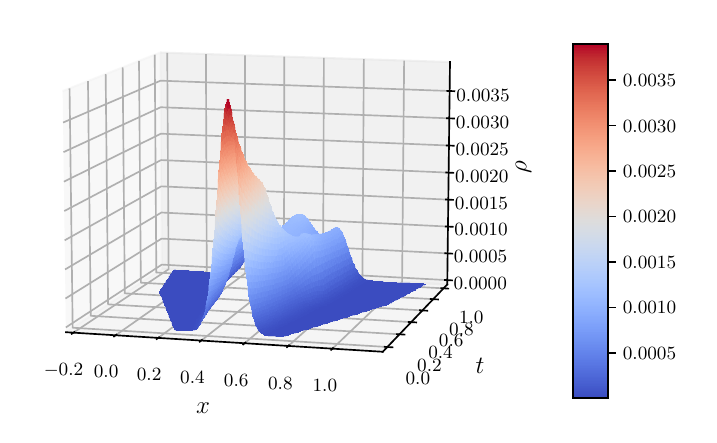}
        \caption{Marginal surface}
        \label{fig:gauss_to_sumgauss_volup_rel_marg_surf}
    \end{subfigure}
    \begin{subfigure}[t]{0.45\textwidth}
        \centering
        \includegraphics[width=\textwidth]{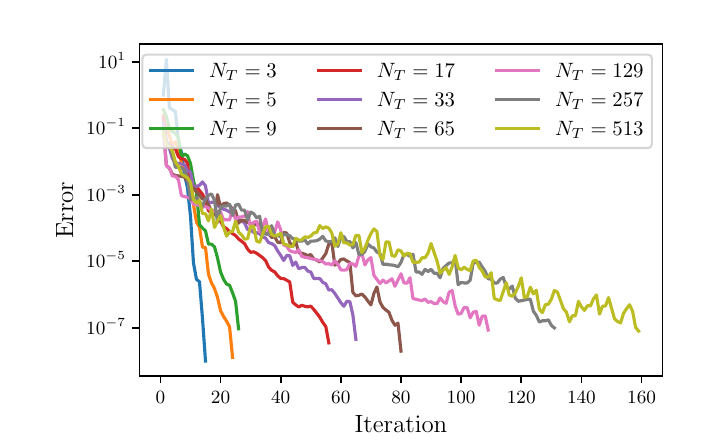}
        \caption{Sinkhorn onvergence of the relative $L^\infty$ error}
        \label{fig:gauss_to_sumgauss_volup_rel_linf_conv}
    \end{subfigure}

    \begin{subfigure}[t]{0.45\textwidth}
        \centering
        \includegraphics[width=\textwidth]{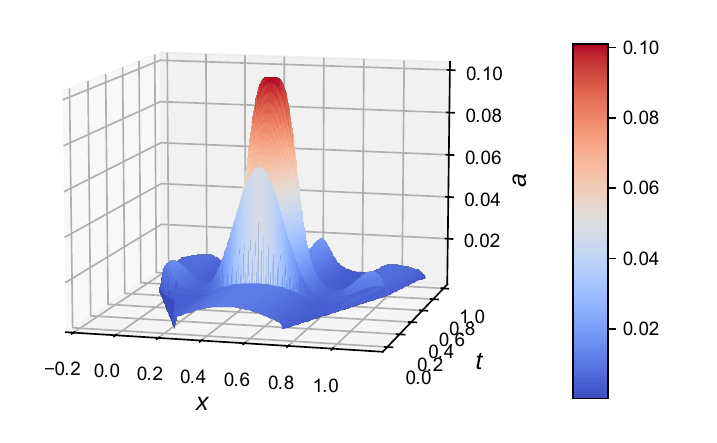}
        \caption{Final volatility surface}
        \label{fig:gauss_to_sumgauss_volup_vol_surf}
    \end{subfigure}
    \begin{subfigure}[t]{0.45\textwidth}
        \centering
        \includegraphics[width=\textwidth]{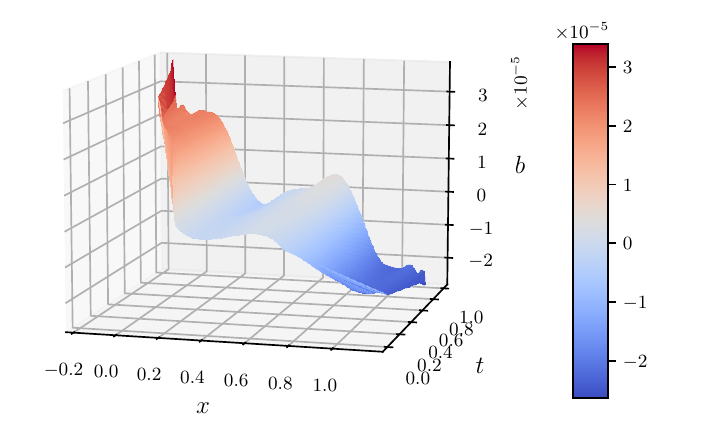}
        \caption{Final drift surface}
        \label{fig:gauss_to_sumgauss_volup_drift_surf}
    \end{subfigure}

    \begin{subfigure}[t]{0.45\textwidth}
        \centering
        \includegraphics[width=\textwidth]{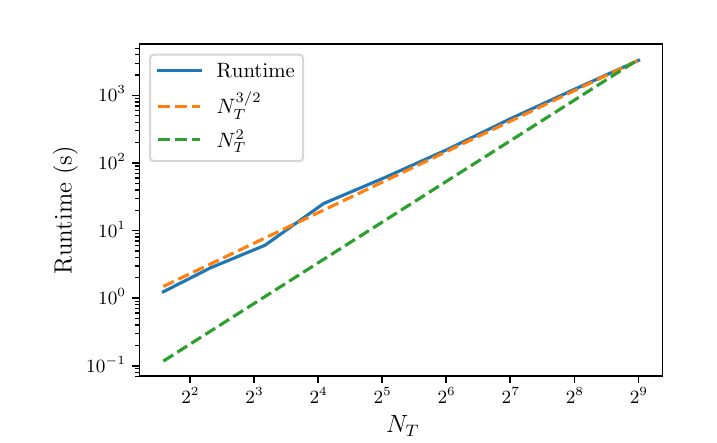}
        \caption{Runtime curve}
        \label{fig:gauss_to_sumgauss_volup_rt_curves}
    \end{subfigure}
    \begin{subfigure}[t]{0.45\textwidth}
        \centering
        \includegraphics[width=\textwidth]{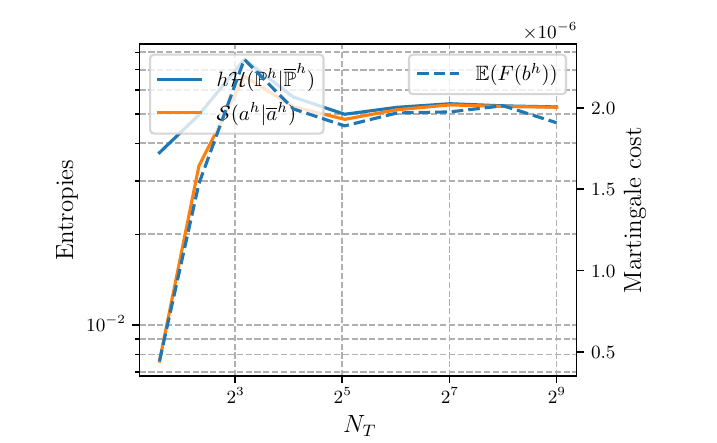}
        \caption{Convergence of the entropy and specific entropy}
        \label{fig:gauss_to_sumgauss_volup_conv_ent_specent}
    \end{subfigure}
    \caption{A Gaussian to sum of gaussians with a discontinuous in time volatility} 
\label{expe5}
\end{figure} 

\begin{figure}[ht]
    \centering
    \begin{subfigure}[t]{0.45\textwidth}
        \centering
        \includegraphics[width=\textwidth]{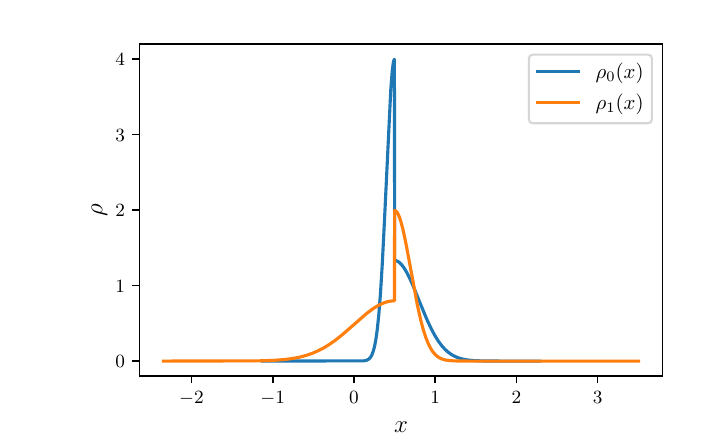}
        \caption{Source and target distributions}
        \label{fig:pw_gauss_1_source_target_distributions}
    \end{subfigure}
    \begin{subfigure}[t]{0.45\textwidth}
        \centering
        \includegraphics[width=\textwidth]{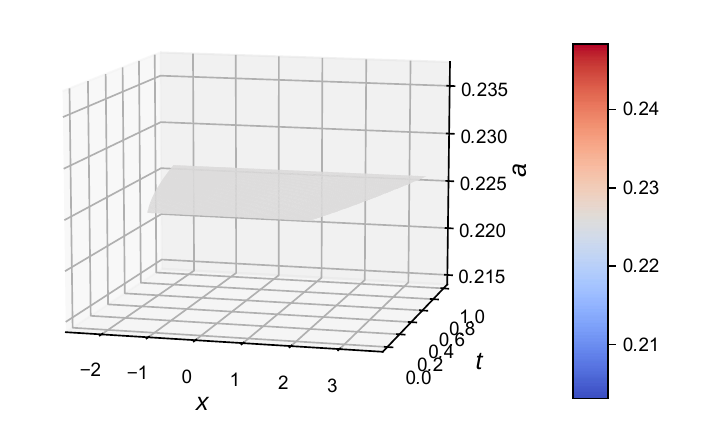}
        \caption{Reference measure volatility surface}
        \label{fig:pw_gauss_1_ref_mes_vol_surf}
    \end{subfigure}

    \begin{subfigure}[t]{0.45\textwidth}
        \centering
        \includegraphics[width=\textwidth]{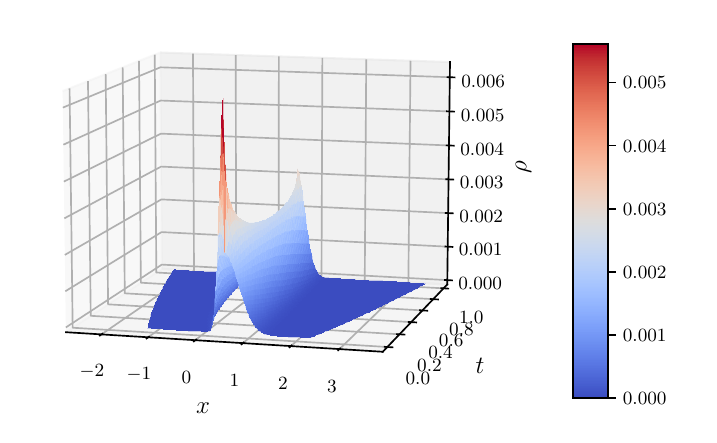}
        \caption{Marginal surface}
        \label{fig:pw_gauss_1_rel_marg_surf}
    \end{subfigure}
    \begin{subfigure}[t]{0.45\textwidth}
        \centering
        \includegraphics[width=\textwidth]{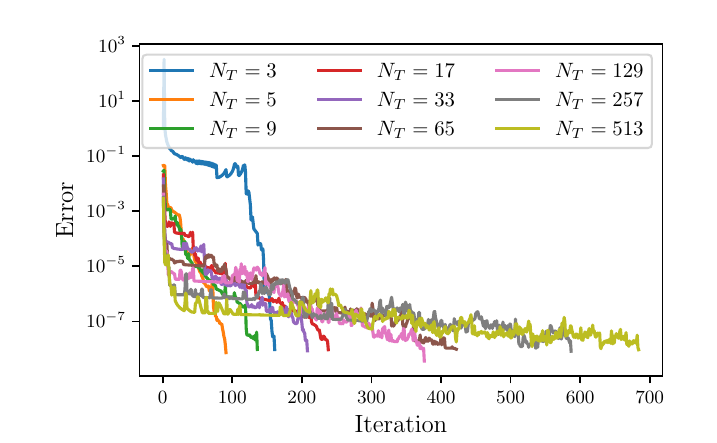}
        \caption{Convergence of the relative $L^\infty$ error}
        \label{fig:pw_gauss_1_rel_linf_conv}
    \end{subfigure}

    \begin{subfigure}[t]{0.45\textwidth}
        \centering
        \includegraphics[width=\textwidth]{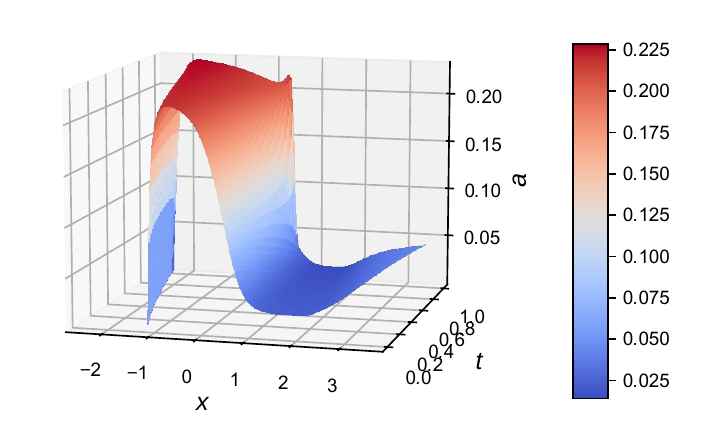}
        \caption{Final volatility surface}
        \label{fig:pw_gauss_1_vol_surf}
    \end{subfigure}
    \begin{subfigure}[t]{0.45\textwidth}
        \centering
        \includegraphics[width=\textwidth]{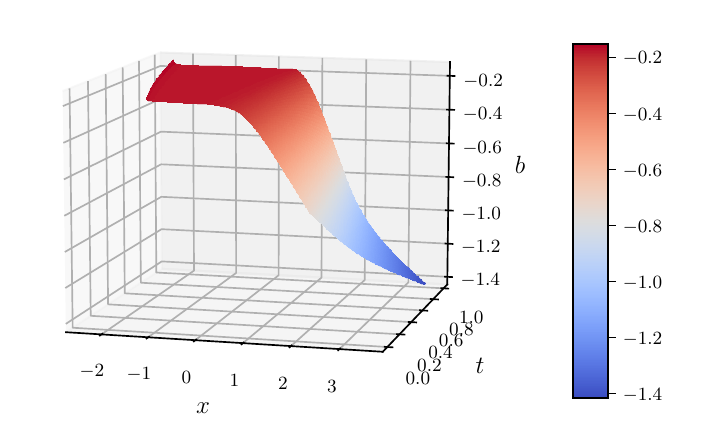}
        \caption{Final drift surface}
        \label{fig:pw_gauss_1_drift_surf}
    \end{subfigure}

    \begin{subfigure}[t]{0.45\textwidth}
        \centering
        \includegraphics[width=\textwidth]{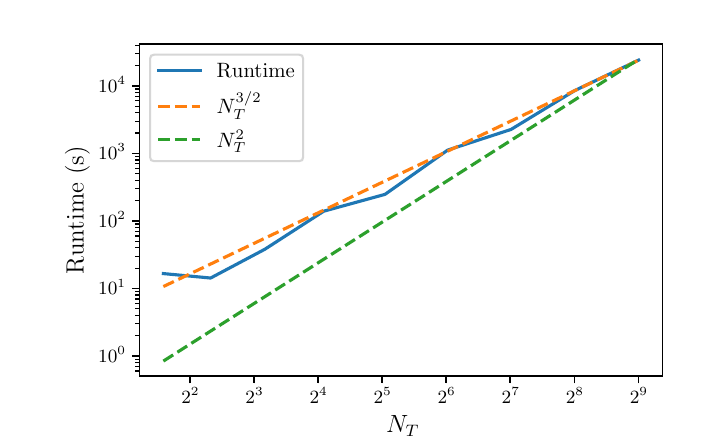}
        \caption{Runtime curve}
        \label{fig:pw_gauss_1_rt_curves}
    \end{subfigure}
    \begin{subfigure}[t]{0.45\textwidth}
        \centering
        \includegraphics[width=\textwidth]{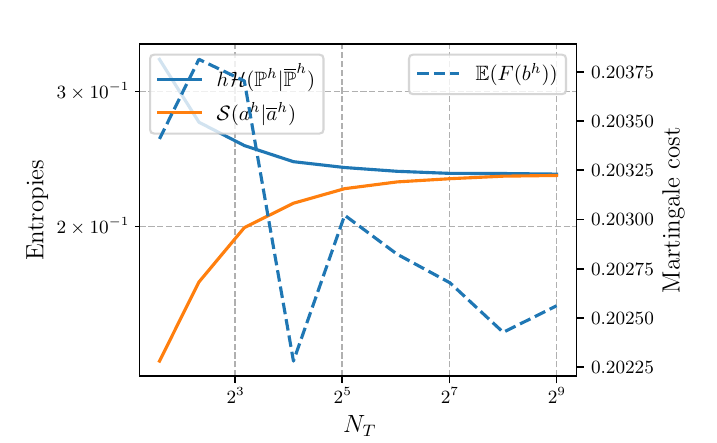}
        \caption{Convergence of the entropy and specific entropy}
        \label{fig:pw_gauss_1_conv_ent_specent}
    \end{subfigure}
    \caption{Marginals not in convex order}
    \label{expe8} 
\end{figure} 
 

\bibliography{bibben,newbib}%

\section{Appendix}
\label{annex} 

\subsection{Proof of proposition \ref{p1} [Specific Relative Entropy] } 
 \label{A1}


{\bf Proof of Prop. \ref{p1}  $i)$ } 
We have $X_{i}^h = X_{ih}$, where $(X_t)_{t \in [0,1]}$ solves 
$dX_t = b_t(X_t)dt+a_t(X_t)^{1/2}dB_t$.

\noindent {\it Step 1: We recall the ingredient we need for small-time asymptotics of the heat kernel}. Let $p_{a,b}^h(t,x,y)$ denote the transition density associated with the diffusion process
$dX_t = b(t,X_t)dt+\sigma(t,x)dB_t$
with values in $\mathbb R^d$ for some $d$-dimensional standard Brownian motion, and $a(t,x) = \sigma(t,x)\sigma(t,x)^\star$. We assume that both functions $(t,x) \mapsto b(t,x)$ and $(t,x) \mapsto a(t,x)$ are twice continuously differentiable, and that the matrix $a(t,x)$ is invertible for every $(t,x) \in [0,1] \times \mathbb R^d$. For $x,y \in \mathbb R^d$ and $0 \leq t \leq t+h \leq 1$, let 
$$p_{a,b}(t,h,x,y)dy= \mathbb P(X_{t+h}\in dy\,|\,X_t = x)$$
denote the transition probability associated with the diffusion process with drift $b$ and diffusion matrix $a$. From \cite{azencott1984densite}, the following asymptotic expansion is known: for every compact $U \subset \mathbb R^d$, there exists $\tau >0$ such that for $\|x-y\| \leq \tau$, we have
\begin{equation} \label{eq: basic transition expansion}
p_{a,b}(t,h,x,y) = \frac{1}{(2\pi h)^{d/2} \mathrm{det}(a(t,x)))^{1/2}}e^{\left(\tfrac{1}{2h}(y-x)^\star a(t,x)^{-1}(y-x)\right)}\big(1+hR(t,h,x,y)\big)
\end{equation}
where
$\sup_{0 \leq t \leq t+h\leq 1, x,y \in U}|R(t,h,x,y)| \leq C$
and $R$ only depends on $\partial_t^i\partial_x^j \sigma$ and $\partial_t^i\partial_x^j b$, for $0 \leq i+j \leq 2$, computed on $x$ and on $f_u$, $0 \leq u \leq 1$, where $f_u$ is the unit length geodesic joining $x$ and $y$ for the Riemannian metric associated with $a(t,x)^{-1}$.
\\

\noindent {\it Step 2: We separate the basic Kullback-Leibler divergence into a local and a global part}. Define
$q_a(t,h,x,y) = \frac{1}{(2\pi h)^{d/2} \mathrm{det}(a(t,x))^{1/2}}\exp{-\frac{1}{2h}(y-x)^\star a(t,x)^{-1}(y-x)}.$
From \eqref{eq: basic transition expansion}, a first-order expansion in $h$ yields
$
\log{ \frac{p_{a,b}(t,h,x,y)}{p_{\bar a,0}(t,h,x,y)} }  = \log{\frac{q_{a}(t,h,x,y)}{q_{\bar a}(t,h,x,y)} } +h {\widetilde R}(t,h,x,y)
$
where $\widetilde R$ has the same properties as $R$ with dependence upon $a$, $b$ and $\bar a$, {\it i.e.} it is valid for $\|x-y\| \leq \tau$. For $\|x-y\| \geq \tau$, we always have the Aaronson's type estimate \cite{aronson1967bounds} that reads
\begin{equation} \label{eq: aaronson}
c_-h^{-d/2}e^{-C_-h^{-1}\|x-y\|^2} \leq p_{a,b}(t,h,x,y) \leq c_+h^{-d/2}e^{-C_+h^{-1}\|x-y\|^2} 
\end{equation}
where $c_{\pm}, C_{\pm}$ depend on $\inf_{t,x} \|a(t,x)\| >0$ and $\sup_{t,x}(\|b(t,x)\|, \|a(t,x)\|)$, thus requiring uniform boundedness. 
Thus
\begin{align*}
\log{\frac{p_{a,b}(t,h,x,y)}{p_{\bar a,0}(t,h,x,y)}}  & =  \left(\log{\frac{q_{a}(t,h,x,y)}{q_{\bar a}(t,h,x,y)}}  +h {\widetilde R}(t,h,x,y)\right){\bf 1}_{\|x-y\| \leq \eta} + V_\eta(t,h,x,y),
\end{align*}
where
$V_\eta(t,h,x,y) = \log{\frac{p_{a,b}(t,h,x,y)}{p_{\bar a,0}(t,h,x,y)}} 
{\bf 1}_{\|x-y\| \geq \eta}.$
The estimate \eqref{eq: aaronson} yields
\begin{align*}
& {\mathbb E}_{\mathbb P}\left[V_\eta(ih,h,X_{ih},X_{(i+1)h})\right] \\
&\leq \frac{c^+_{a,b}}{c^-_{\bar a,0}}\mathbb P(\|X_{(i+1)h}-X_{ih}\| \geq \eta) -(C_{a,b}^+-C^-_{a,0})h^{-1} \mathbb E_{\mathbb P}[\|X_{(i+1)h}-X_{ih}\|^2{\bf 1}_{\|X_{(i+1)h}-X_{ih}\| \geq \eta}] \\
&\leq \big(\frac{c^+_{a,b}}{c^-_{\bar a,0}} -(C_{a,b}^+-C^-_{a,0})h^{-1} \mathbb E_{\mathbb P}[\|X_{(i+1)h}-X_{ih}\|^4]^{1/2}\big)\eta^{-1/2}\mathbb E\big[\|X_{(i+1)h}-X_{ih}\|]^{1/2} =O(h^{1/4}).
\end{align*}
Likewise,
$ \mathbb E_{\mathbb P}\big[h \widetilde R(ih,h,X_{ih},X_{(i+1)h})\big){\bf 1}_{\|X_{(i+1)h}-X_{ih}\| \leq \eta}\big] \leq Ch,
$
so that
\begin{equation} \label{eq: approx entropy}
\mathbb E_{\mathbb P}\big[\log{ \frac{p_{a,b}(ih,h,X_{ih},X_{(i+1)h})}{p_{\bar a,0}(ih,h,X_{ih},X_{(i+1)h})}\big] } ] = \mathbb E_{\mathbb P}\big[\log{  \frac{q_{a,b}(ih,h,X_{ih},X_{(i+1)h})}{q_{\bar a,0}(ih,h,X_{(i+1)h},X_{ih})} } \big] +O(h^{1/4})
\end{equation}
and therefore, according to \eqref{eq: approx entropy},
\begin{align*}
\mathcal H(\mathbb P^h\,|\,\overline{\mathbb P}^h) & =  \mathbb E_{\mathbb P}\Big[\log{ \prod_{i = 1}^{N}\frac{p_{a,b}(ih,h,X_{ih},X_{(i+1)h})}{p_{\bar a,0}(ih,h,X_{ih},X_{(i+1)h})} }\Big] \\
& = \sum_{i =1}^N  \mathbb E_{\mathbb P}\Big[\log { \frac{q_{a}(ih,h,X_{ih},X_{(i+1)h})}{q_{\bar a}(ih,h,X_{ih},X_{(i+1)h})} } \Big] + O(Nh^{1/4})\,.
\end{align*}

\noindent {\it Step 3. Control of the entropy of the proxy transitions.}
We have
\begin{align*}
&\log{ \frac{q_{a}(ih,h,X_{ih},X_{(i+1)h})}{q_{\bar a}(ih,h,X_{ih},X_{(i+1)h})} } 
= -\tfrac{1}{2}\log{  \frac{\mathrm{det}(a(ih,X_{ih}))}{\mathrm{det}(\bar a(ih, X_{ih}))}} \\
&-\tfrac{1}{2}h^{-1}(X_{(i+1)h}-X_{ih})^\star (a(ih,X_{ih})^{-1}-\bar a(ih,X_{ih})^{-1})(X_{(i+1)h}-X_{ih}).
\end{align*}
Next, we have
\begin{align*}
&\tfrac{1}{2}h^{-1}(X_{(i+1)h}-X_{ih})^\star (a(ih,X_{ih})^{-1}-\bar a(ih,X_{ih})^{-1})(X_{(i+1)h}-X_{ih}) \\
& = -\tfrac{1}{2}h^{-1}\big(\int_{ih}^{(i+1)h}\sigma(s,X_s)dB_s\big)^\star (a(ih,X_{ih})^{-1}-\bar a(ih,X_{ih})^{-1})\big(\int_{ih}^{(i+1)h}\sigma(s,X_s)dB_s\big) +\zeta_i^h,
\end{align*}
where
\begin{align*}
\zeta_i^h & = h^{-1}\big(\int_{ih}^{(i+1)h}b(s,X_s)ds\big)^\star (a(ih,X_{ih})^{-1}-\bar a(ih,X_{ih})^{-1})\big(\int_{ih}^{(i+1)h}b(s,X_s)ds\big) \\
+ & h^{-1}\big(\int_{ih}^{(i+1)h}\sigma(s,X_s)dB_s\big)^\star (a(ih,X_{ih})^{-1}-\bar a(ih,X_{ih})^{-1})\big(\int_{ih}^{(i+1)h}b(s,X_s)ds\big) \\
+ & h^{-1}\big(\int_{ih}^{(i+1)h}b(s,X_s)ds\big)^\star (a(ih,X_{ih})^{-1}-\bar a(ih,X_{ih})^{-1})\big(\int_{ih}^{(i+1)h}\sigma(s,X_s)dB_s\big)
\end{align*}
The boundedness of $b$ and $\sigma$ (and its inverse) readily yields 
$\max_{1 \leq i \leq N}\mathbb E_{\mathbb P}[\|\zeta_i^h\|] = O(h^{1/2})$
by applying Cauchy-Schwarz inequality repeatedly. Also, 
writing
$h^{-1/2}\int_{ih}^{(i+1)h}\sigma(s,X_s)dB_s = \sigma(ih,X_{ih})+\xi_i^h,$
where 
$\xi_i^h = h^{-1/2}\int_{ih}^{(i+1)h}(\sigma(s,X_s)-\sigma(ih,X_{ih}))dB_s$
satisfies
$\max_{1 \leq i \leq N}\mathbb E_{\mathbb P}[\|\xi_i^h\|] = O(h^{1/2})$
thanks to the smoothness of $(t,x) \mapsto \sigma(t,x)$ (at least Lipschitz in both variables) and standard moment estimates, we obtain
\begin{align*}
&-\tfrac{1}{2}h^{-1}\big(\int_{ih}^{(i+1)h}\sigma(s,X_s)dB_s\big)^\star (a(ih,X_{ih})^{-1}-\bar a(ih,X_{ih})^{-1})\big(\int_{ih}^{(i+1)h}\sigma(s,X_s)dB_s\big) \\
& = \tfrac{1}{2} \mathrm{Tr}\big(\bar a(ih,X_{ih})^{-1}(a(ih,X_{ih})-\bar a(ih,X_{ih)}))\big) + \widetilde \xi_i^h,
\end{align*}
where now, in the same way as for the remainder term $\xi_i^h$, we readily have
$\max_{1 \leq i \leq N}\mathbb E_{\mathbb P}[|\widetilde \xi_i^h|] = O(h^{1/2}).$
Putting together all our estimates, we obtain
\begin{align*}
&\log{ \frac{q_{a}(ih,h,X_{ih},X_{(i+1)h})}{q_{\bar a}(ih,h,X_{ih},X_{(i+1)h})} } 
= -\tfrac{1}{2}\log{  \frac{\mathrm{det}(a(ih,X_{ih}))}{\mathrm{det}(\bar a(ih, X_{ih}))}} \\
&+\tfrac{1}{2} \mathrm{Tr}\big(\bar a(ih,X_{ih})^{-1}(a(ih,X_{ih})-\bar a(ih,X_{ih)}))\big) +\widetilde \zeta_i^h,
\end{align*}
for some stochastic remainder term $\widetilde \zeta_i^h$ that satisfies the moment estimate
$\max_{1 \leq i \leq N}\mathbb E_{\mathbb P}[|\widetilde \zeta_i^h|] = O(h^{1/2}).$
By definition of the relative entropy $\mathcal S^{\mathcal I}$, we thus obtain
\begin{align*}
\mathcal H(\mathbb P^h\,|\,\overline{\mathbb P}^h) = \frac{1}{2}\sum_{i = 1}^N \mathbb E_{\mathbb P}\big[\mathcal S^{\mathcal I}(a(ih,X_{ih})|\bar a(ih,X_{ih})\big]+O(Nh^{1/4}).
\end{align*}
Finally, using that $h \sim N^{-1}$, we obtain
\begin{equation}
\label{MarcA}
h \, \mathcal H(\mathbb P^h\,|\,\overline{\mathbb P}^h) = \frac{1}{2}\sum_{i = 1}^N h  \mathbb E_{\mathbb P}\big[\mathcal S^{\mathcal I}(a(ih,X_{ih})|\bar a(ih,X_{ih})\big]+O(h^{1/4})
\end{equation}
and 
$\lim_{h \rightarrow 0}h\mathcal H(\mathbb P^h\,|\,\overline{\mathbb P}^h) =\mathbb E_{\mathbb P}\Big(  \int_0^1 \mathcal S^{\mathcal I}(a(s,X_{s})|\bar a(s,X_{s})ds\Big)$
by Riemann approximation, which is the desired result. More specifically, one readily checks that
$$\int_{ih}^{(i+1)h}\mathcal S^{\mathcal I}(a(s,X_{s})|\bar a(s,X_{s})ds = h\mathcal S^{\mathcal I}(a(ih,X_{ih})|\bar a(ih,X_{ih})+\rho_i^h,$$ 
where $\max_{1 \leq i \leq N}\mathbb E[|\rho_i^h|] = O(h^{1/2})$
using that both $\mathcal S^{\mathcal I}$ and $a$, $\bar a$ are smooth functions.\\

\noindent {\bf Proof of Proposition \ref{p1} $ii)$ } 
Since $\P^h$ is Markov we rewrite the relative Entropy using the transition probabilities 
$
  \Ec(\P^h | \bP^h)   =  \Ec(\P_0 | \rho_0) +  \sum_{i=0}^{N-1} \E_{\P^h_i}(  \Ec(\P_\tri^h | \bP_\tri^h) )  \,.
$  
It is therefore sufficient to focus on one transition and show 
$
\Ec(\P_\tri^h | \bP_\tri^h) \ge \dfrac{1}{2} \S^{\Ic}(\D_i^h(X_i^h) | \bD) \,. 
$
Let us simplify the notations and look  at $\Ec(\P_x | \bP_x)$ where $\P_x$ is  a probability measure over a set $Y=\R_{t_{i+1}}$
parameterized by $x \in X=\R_{t_{i}}$ and $\bP_x$ a probability measure over $Y$ with normal  law $\Gamma_{x\rightarrow y} (0,\bD)$.  
This inequality is a consequence of the well-known fact that the probability that minimizes the entropy under constrained second moments is a Gaussian measure. In the following, we prove it using the dual formulation of entropy, recalled hereafter, see \cite[lemma 9.4.4.]{AGS}
\begin{equation} \label{dualEntropy}
\left\{  
\begin{array}{c}
  \Ec(\P_x | \bP_x)  = \sup_{\displaystyle f \in C_b^0(Y)}  \beta(f) \\
  \beta(f) := \dint f(y)\, d\P_x(y)  - \dint \exp{f(y)} -1  \, d\bP_x(y)
  \,.\end{array}
  \right.
\end{equation}
We plan on restricting the test functions $f$ to quadratic functions 
$ y \rightarrow p_{x,C,A} (y) = C+ A\, (y-x)^2
$
which are not bounded functions. To do so, we remark that the truncation 
\begin{equation}
        p_{x,C,A,R} (y) = \begin{cases} p_{x,C,A} (y) &\text{ if } |y-x| \leq R\\
         C+ A\, R^2 &\text{ if } | y- x | \geq R\,,
    \end{cases}
\end{equation}
is a pointwise increasing (in the variable $R$) sequence of functions if $A \geq 0$ and it is also the case for $\exp{p_{x,C,A,R}}$. 
It is a pointwise decreasing sequence of functions if $A \leq 0$ and it is also the case for $\exp{p_{x,C,A,R}}$. As a consequence, in both cases, one can apply the monotone convergence theorem to obtain 
\begin{equation}\label{EqMonotoneTheorem}
\beta(p_{x,C,A}) =  \lim_{R \to \infty} \beta(p_{x,C,A,R}) \leq  \Ec(\P_x | \bP_x) \,.
\end{equation}
We can now directly optimize on the family of quadratic functions.
plugging 
$f:= C+ A\,(y-x)^2 $ in the integrals we get 
\[
\begin{array}{ll}
\beta(C+ A\, \dfrac{(y-x)^2}{2} ) &  = C + h\, \dfrac{D(x)}{2}  \, A  -  \dint \exp{C+A\,\dfrac{(y-x)^2}{2}} -1  \, d\bP_{x}(y)  \\
\end{array}
\]
$(C,A) \rightarrow \beta(C+ A\,(y-x)^2)$ is strictly concave. The optimality in $C$ gives 
\begin{equation}
  1   =  (2 \pi h \bD )^{-\frac{1}{2}} \, \exp{C} \, \dint \exp{(A - \dfrac{1}{h\, \bD}) \,\dfrac{(y-x)^2}{2}  } \, dy \,.\\[8pt]   
  \end{equation}
We identify above  a Gaussian probability measure over $y$ with mean $x$ and standard deviation $\alpha$ defined by 
$
\alpha^2(A)   = (\dfrac{1}{h\, \bD} -A )^{-1}
$
(for small $h$ and $\bD > 0$ this is always well defined). 
The  normalizing constant gives:
$
(2 \pi h \bD )^{-\frac{1}{2}} \, \exp{C}  = (2 \pi \alpha^2(A)   )^{-\frac{1}{2}} 
$.
We can eliminate $C$ and the function to maximize in $A$ now is (the second integral vanishes) 
$
A \rightarrow  \dfrac{1}{2}  \log{1-h\, \bD \, A } + h\, \dfrac{D(x)}{2}  \, A  
$
the optimal $A$ satisfies
$
1-h\, \bD \, A  = \dfrac{\bD}{D} \quad \quad h\, D \, A = \dfrac{D(x)}{\bD} - 1 
\,.$
Therefore we get the result using the inequality \eqref{EqMonotoneTheorem}
\[
\sup_{ \left\{ \begin{array}{l} 
(C,A) \in (\R \times \R) \\
f:= C+ A\,(y-x)^2
\end{array} \right.
}  \beta(f)  =  - \log{ \dfrac{D(x)}{\bD}  } + \dfrac{D(x)}{\bD} - 1 \,.
\]

\subsection{Proof of  theorem \ref{ThStroockVaradhanExtendedInTime} [Convergence of Markov Chain to diffusion] }
 \label{A2}

\subsection{Convergence of Markov chains to diffusions} 


We consider an inhomogeneous (family in $h$) of transition probabilities $\P_{\tri}^h(x,dy)$ on $\R^d$ for $i=0,\ldots, N-1$. We associate to it a discrete time Markov chain $X_0, \ldots, X_{N-1}$ that is turned itself into a continuous time process $x(t)$ with $x(ih)=X_i$ and linear interpolation for $t \in [ih, (i+1)h)$ for $i=0,\ldots, N-1$ with law $ \tP^h \in \Pc(\Omega)$.
 The discrete sampling $x(0), x(h), \ldots, x((N-1)h)$ is a Markov sequence with inhomogeneous transition 
$$\tP^h(x((i+1)h) \in dy\,|\,x(ih)=x) = \P_{\tri}^h(x,dy)\,.$$  

 We define the piecewise constant in time drift and diffusion coefficients~:

\begin{equation} \label{cSMOM}
\left\{
 \begin{array}{l} 
  \tV^h_t(x)  \cqq   \displaystyle \tfrac{1}{h}\mathbb E_{\mathbb P_{\lfloor t\,h^{-1}\rfloor \rightarrow \lfloor t\, h^{-1}\rfloor+1}}^h  \big[(X_{i+1}-x)1_{\|X_{i+1}-x\| \leq 1} \big],  \\[10pt]
 \tD^h_t(x)    \cqq    \displaystyle \tfrac{1}{h}\mathbb E_{\mathbb P_{\lfloor t\,h^{-1}\rfloor \rightarrow \lfloor t\, h^{-1}\rfloor+1}}^h  \big[(X_{i+1}-x)(X_{i+1}-x)^\star 1_{\|X_{i+1}-x\| \leq 1}\big]\,. \\ \\
  \end{array} 
  \right.
 \end{equation}  

We also need a conditional moment of order larger than $4$ ($\alpha >0$) referred to  abusively 
as ``kurtosis'' (kurtosis corresponds to $\alpha =0$) 
 \begin{equation} \label{KURT}
  \begin{array}{l}
  \K^h_i(x_i)  \coloneqq   h^{-(2+\alpha)}\mathbb E_{\P^h_\tri} ( \| X_{i+1}-x_i\|^{ 4+ 2\,\alpha} )\,. 
  \end{array}  
\end{equation}

{Note that when  $X_i^h$ is the discrete sampling of a continuous diffusion with bounded characteristics, $c_i^h(x_i)$ is controlled uniformly by the bound on the characteristics.
(see Lemma \ref{lemmaMH})}.


\begin{theorem}{}\label{ThStroockVaradhanExtendedInTime}
Assuming  $(\tV^h_t, \tD^h_t)$ defined in (\ref{cSMOM}) and $(\K_i^h)$ defined in  (\ref{KURT}) satisfy 
\begin{equation} \label{DDD11} 
\left\{
\begin{array}{l}
i)\quad  \lim_{h  \searrow 0}  \sup_{ |x|\le R, t \in [0,1]} \| \tV^h_t(x) - \tV^0_t(x)   \|  = 0 \,,   \\[10pt]
ii)\quad  \lim_{h  \searrow 0}  \sup_{ |x|\le R, t \in [0,1]} \| \tD^h_t(x) - \tD^0_t(x)   \|  = 0  \,,\\[10pt]
iii)\quad \lim_{h \searrow 0 } \max_{i=1,\ldots, N-1}\sup_{\|x\| \leq R}h\, \K^h_i(x) =  0  \,,\\[10pt]
\end{array}
\right.
\end{equation}
for some pair $\tV^0_t(x), \tD^0_t(x) $ of integrable functions.
Then,   $\tP^h$ narrowly converges to $\tP^0$ a diffusion process in the weak sense 
with drift and diffusion coefficients $(\tV^0_t, \tD^0_t)$ .
\end{theorem}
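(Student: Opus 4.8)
The plan is to follow the classical Stroock--Varadhan programme for convergence of Markov chains to diffusions (\cite{SVbook}, Chapter~11), the only new ingredient being time-inhomogeneity, which we absorb either by working directly with the time-dependent generator $\mathcal{L}_t\phi(x) = \tV^0_t(x)\cdot\nabla\phi(x) + \tfrac12\,\tD^0_t(x):\nabla^2\phi(x)$, or, equivalently, by applying the autonomous theorem to the space--time chain $(ih,X_i)$ on $[0,1]\times\R^d$. There are three steps: tightness of $(\tP^h)_h$, identification of every subsequential limit as a solution of the martingale problem for $(\tV^0,\tD^0)$, and promotion to full narrow convergence through well-posedness.

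\emph{Tightness.} From $ii)$, the truncated quadratic increments satisfy $\mathbb{E}_{\P^h_\tri}[\|X_{i+1}-x\|^2\mathbf 1_{\|X_{i+1}-x\|\le 1}] = h\,|\tD^h_{ih}(x)| = O(h)$ locally uniformly in $x$, and from $iii)$, $\mathbb{E}_{\P^h_\tri}[\|X_{i+1}-x\|^{4+2\alpha}] = h^{2+\alpha}\K^h_i(x) = o(h)$ locally uniformly; hence by Markov's inequality $\P^h_\tri(\|X_{i+1}-x\|>\epsilon)\le \epsilon^{-(4+2\alpha)}h^{2+\alpha}\K^h_i(x)$, so $\sum_{i}\P^h_\tri(\|X_{i+1}-X_i\|>\epsilon)=o(1)$ for every $\epsilon>0$. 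Assuming $(\tP^h_0)_h$ tight, the moment bounds yield a compact-containment estimate, and these increment controls give, via a standard Aldous-type oscillation criterion for the linear interpolation, tightness of $(\tP^h)_h$ in $\Pc(\Omega)$.

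\emph{Martingale problem.} Let $\tP$ be a narrow limit of a subsequence and fix $\phi\in C_c^\infty(\R^d)$. The telescoping identity $\phi(X_n)-\phi(X_0) = \sum_{i=0}^{n-1}\big(\mathbb{E}_{\P^h_\tri}[\phi(X_{i+1})\mid X_i]-\phi(X_i)\big) + M^h_n$ defines a discrete $\tP^h$-martingale $M^h$. Taylor expanding $\phi$ to second order around $X_i$ and splitting the first- and second-order increment moments into their $\{\|\cdot\|\le1\}$ and $\{\|\cdot\|>1\}$ parts, the truncated parts reproduce $h\big(\tV^h_{ih}(X_i)\cdot\nabla\phi(X_i)+\tfrac12\tD^h_{ih}(X_i):\nabla^2\phi(X_i)\big)$, while the untruncated parts and the Taylor remainder are $o(h)$ per step: on $\{\|y-x\|>1\}$ one bounds $\|y-x\|$ and $\|y-x\|^2$ by $\|y-x\|^{4+2\alpha}$, and on $\{\|y-x\|\le1\}$ one uses, for $\delta\in(0,1]$, the elementary inequality $\|y-x\|^3\mathbf 1_{\|y-x\|\le1}\le \delta\|y-x\|^2+\delta^{-(1+2\alpha)}\|y-x\|^{4+2\alpha}$, so that summing over $i$ with $ih<t$ and letting $h\to0$ then $\delta\to0$ kills the remainder. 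Thus $\phi(X_n)-\phi(X_0) - h\sum_{ih<t}\mathcal L^h_{ih}\phi(X_i) = M^h_n+o(1)$ with $\mathcal L^h_t\phi(x) = \tV^h_t(x)\cdot\nabla\phi(x)+\tfrac12\tD^h_t(x):\nabla^2\phi(x)$; by $i)$--$ii)$ the Riemann sum converges to $\int_0^t\mathcal L_s\phi(X_s)\,ds$ (locally uniform convergence of the coefficients together with continuity of $(t,x)\mapsto(\tV^0_t,\tD^0_t)$, or the relevant weak convergence, to reach the time integral), and passing to the limit along the subsequence — legitimate since $\phi,\nabla\phi,\nabla^2\phi$ are bounded and the increment tails are controlled — shows $\phi(X_t)-\phi(X_0)-\int_0^t\mathcal L_s\phi(X_s)\,ds$ is a $\tP$-martingale for all $\phi\in C_c^\infty(\R^d)$, i.e. $\tP$ is a weak diffusion with characteristics $(\tV^0,\tD^0)$.

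\emph{Conclusion and main difficulty.} When the martingale problem for $(\tV^0,\tD^0)$ is well posed — which is the case in the regimes where this theorem is used, the limit coefficients then being regular — uniqueness of the limit point forces the whole family $(\tP^h)_h$ to converge narrowly to $\tP^0$. The main obstacle is the second step: extracting $o(h)$ per time step from the Taylor remainder and the heavy-tail contributions using only a truncated second moment and a $(4+2\alpha)$-moment (the interpolation inequality above is the crux), and then upgrading the locally uniform convergence $i)$--$ii)$ to convergence of the compensator $h\sum_{ih<t}\mathcal L^h_{ih}\phi(X_i)$ to $\int_0^t\mathcal L_s\phi(X_s)\,ds$ along the random paths; the tightness of the first step and the well-posedness input of the third step are comparatively routine given \cite{SVbook,kushner}.
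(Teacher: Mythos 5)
Your proposal is correct in outline, but it takes a genuinely different route from the paper. You rerun the Stroock--Varadhan machinery from scratch for the time-inhomogeneous chain: tightness from the truncated second moments plus the kurtosis bound, identification of every subsequential limit as a solution of the time-dependent martingale problem via the telescoping/Taylor argument (your interpolation inequality for the third-order remainder being the key technical point), and well-posedness of the limit martingale problem to upgrade subsequential to full narrow convergence. The paper never reopens that machinery: it augments the chain with the deterministic time coordinate, $y(t)=(n(t),x(t))$ with $n(t)=t$, which turns the inhomogeneous chain into a homogeneous one on $[0,1]\times\R^d$, and then verifies the three hypotheses (2.4)--(2.6) of \cite[Th.~11.2.3]{SVbook} for the augmented transition kernels; the jump condition (2.6) is precisely where the kurtosis bound enters, through the same Markov-type tail estimate $\P^h_{\tri}(x,\R^d\setminus\mathcal B(x,\ee))\le \ee^{-4-2\alpha}h^{2+\alpha}\K^h_i(x)$ that you use. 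Your route buys a self-contained argument that makes transparent exactly which moments are needed where (only a truncated second moment plus one moment of order $4+2\alpha$ for the remainders), at the cost of redoing tightness, localization to $|x|\le R$, and the convergence of the compensator along random paths; the paper's route buys brevity by delegating all of that to the cited theorem, at the cost of the space--time bookkeeping (the explicit computation of $b_h(u,x)$ and $a_h(u,x)$ for the augmented chain). Note that both arguments need more than the statement literally advertises: \cite[Th.~11.2.3]{SVbook}, like your identification and uniqueness steps, requires continuity of the limit coefficients and well-posedness of the limit martingale problem, hypotheses you flag explicitly and which are satisfied in the paper's only use of the theorem, inside the proof of Lemma \ref{FXlemma}, where the regularization produces smooth, bounded, nondegenerate coefficients; your standing assumption that the initial laws are tight/convergent is likewise implicit there.
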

This is a generalisation of \cite[Th. 11.2.3]{SVbook} to inhomogeneous in time Markov chain. 
We give here some remarks about  space-time transformations in the inhomogeneous case that are not detailed in \cite{ SVbook} .

Suppose we have a inhomogeneous (family in $h$) of transition probabilities $\P_{\tri}^h(x,dy)$ on $\R^d$ for $i=0,\ldots, N-1$. We associate to it a discrete-time Markov chain $X_0, \ldots, X_{N-1}$ that is turned itself into a continuous time process $x(t)$ with $x(ih)=X_i$ and linear interpolation for $t \in [ih, (i+1)h)$ for $i=0,\ldots, N-1$. The discrete sampling $x(0), x(h), \ldots, x((N-1)h))$ is a Markov sequence with inhomogeneous transition 
$$\P(x((i+1)h) \in dy\,|\,x(ih)=x) = \P_{\tri}^h(x,dy).$$
We next consider an equivalent homogeneous Markov process/chain via a simple time/space transformation. 
Define the deterministic process over the integers $N(i) = i$ extended into a continuous time (deterministic) process 
$n(ih)=N(i)h=ih$ over the times of the form $ih$ and with (trivial) linear interpolation so that simply $n(t)=t$. Define the new process
$y(t) = (n(t), x(t)).$
We claim that $y_t$ is a homogeneous Markov sequence on the times of the form $ih$ with state space $[0,1]\times \R^d$ (actually $\{kh,k=0,\ldots, N-1\} \times \R^d$ but we embed everything into $[0,1]\times \R^d$). Indeed
\begin{multline*}
\P(y((i+1)h) \in dv dy\,|\,y(ih) = (u,x) ) \\
 = \P(n((i+1)h)\in dv, x((i+1)h) \in dy\,|\,n(ih) = u, x(ih) = x) )
\\ = \delta_{u+h}(dv)\P_{\lfloor uh^{-1}\rfloor \rightarrow \lfloor uh^{-1}\rfloor+1}^h(x, dy) 
 =: \Pi^h((u,x), dvdy),
\end{multline*}
and the last expression does not depend on $i$ hence the homogeneity.
With the same notation as Stroock-Varadhan, this defines the family of (homogeneous) transitions over which we are going to apply the standard homogeneous Stroock-Varadhan result. It suffices to show that our condition \eqref{DDD11} entails the three classical conditions of Stroock-Varadhan, i.e. conditions (2.4)-(2.5)-(2.6) p. 268.

Following the definition given at the end of p. 267 in Stroock-Varadhan, we have semi-explicit expressions for the drift and diffusion coefficients, (with $x \in \R^d$ that becomes $(u,x) \in [0,1] \times \R^d$) namely
\begin{align*}
b_h((u,x)) & = \tfrac{1}{h} \int_{|(v,y)-(u,x)| \leq 1} ((v,y)-(u,x))\Pi^h((u,x), dvdy) \\
& = \tfrac{1}{h}\left\{
\begin{array}{l}
\displaystyle\int_{|y-x| \leq 1} \int_{[0,1]} (v-u)\delta_{u+h}(dv)\P_{\lfloor uh^{-1}\rfloor \rightarrow \lfloor uh^{-1}\rfloor+1}^h(x, dy)  \\ \\
\displaystyle\int_{|y-x| \leq 1} (y-x) \int_{[0,1]}\delta_{u+h}(dv)\P_{\lfloor uh^{-1}\rfloor \rightarrow \lfloor uh^{-1}\rfloor+1}^h(x, dy)
\end{array}
\right. \\
& = \left\{
\begin{array}{l}
\displaystyle \P_{\lfloor uh^{-1}\rfloor \rightarrow \lfloor uh^{-1}\rfloor+1}^h(x, \mathcal B_{\R^d}(x,1))  \\ \\
\displaystyle \tfrac{1}{h}\int_{|y-x| \leq 1} (y-x)\P_{\lfloor uh^{-1}\rfloor \rightarrow \lfloor uh^{-1}\rfloor+1}^h(x, dy)
\end{array}
\right.
\end{align*}
and conditions $i)$ and $iv)$ of \eqref{DDD11} entail the convergence, for every $R>0$:
$\lim_{h \rightarrow 0}\sup_{u \in [0,1], |x| \leq R}\|b_h(u,x)-(p_u(x),\tilde b^0_u(x))\|=0$
which is (2.4) of Stroock-Varadhan with the state variable $(u,x) \in [0,1 \times \R^d$ instead of $x \in \R^d$.
Likewise, with a slight abuse of notation for the three components of the $2 \times 2$ symmetric matrix that defines the diffusion matrix, we have 
\begin{align*}
a_h((u,x)) 
& = \tfrac{1}{h}\left\{
\begin{array}{l}
\displaystyle\int_{|y-x| \leq 1} \int_{[0,1]} (v-u)^2\delta_{u+h}(dv)\P_{\lfloor uh^{-1}\rfloor \rightarrow \lfloor uh^{-1}\rfloor+1}^h(x, dy)  \\ \\
\displaystyle\int_{|y-x| \leq 1} (y-x)^2 \int_{[0,1]}\delta_{u+h}(dv)\P_{\lfloor uh^{-1}\rfloor \rightarrow \lfloor uh^{-1}\rfloor+1}^h(x, dy) \\ \\
\displaystyle\int_{|y-x| \leq 1} (y-x) \int_{[0,1]}(v-u)\delta_{u+h}(dv)\P_{\lfloor uh^{-1}\rfloor \rightarrow \lfloor uh^{-1}\rfloor+1}^h(x, dy)
\end{array}
\right. \\
& = \left\{
\begin{array}{l}
\displaystyle h\P_{\lfloor uh^{-1}\rfloor \rightarrow \lfloor uh^{-1}\rfloor+1}^h(x, \mathcal B_{\R^d}(x,1))  \\ \\
\displaystyle \tfrac{1}{h}\int_{|y-x| \leq 1} (y-x)^2\P_{\lfloor uh^{-1}\rfloor \rightarrow \lfloor uh^{-1}\rfloor+1}^h(x, dy) 
\\ \\
\displaystyle\int_{|y-x| \leq 1} (y-x)\P_{\lfloor uh^{-1}\rfloor \rightarrow \lfloor uh^{-1}\rfloor+1}^h(x, dy)
\end{array}
\right. \\ \\
& \rightarrow_{h \rightarrow 0} \left\{
\begin{array}{l}
0  \\ \\
\displaystyle \tfrac{1}{h}\int_{|y-x| \leq 1} (y-x)^2\P_{\lfloor uh^{-1}\rfloor \rightarrow \lfloor uh^{-1}\rfloor+1}^h(x, dy) 
\\ \\
0
\end{array}
\right.
\end{align*}
and conditions $ii)$ of \eqref{DDD11} entail the convergence
$\lim_{h \rightarrow 0}\sup_{u \in [0,1], |x| \leq R)}\|a_h(u,x)-
\left(
\begin{array}{ll}
\tilde a^0(x) & 0\\
0 & 0
\end{array}
\right)
\|=0$
which is (2.5) of Stroock-Varadhan. Finally, we check (2.6) of Stroock-Varadhan, namely
for $\ee>0$ given 
\begin{equation}
\label{SV33}
    \lim_{h \rightarrow 0}\sup_{|x| \leq R, u \in [0,1]}\Delta_h^\varepsilon((u,x), [0,1]\times\R^d \setminus \mathcal B_{[0,T]\times \R^d}((u,x), \varepsilon))=0. 
\end{equation}
Setting $\mathcal B^c(u,x,\varepsilon) = [0,1]\times\R^d \setminus \mathcal B_{[0,T]\times \R^d}((u,x), \varepsilon)$, we have
\begin{equation}
 \nonumber
\begin{array}{ll}
 \Delta_h^\varepsilon((u,x),\mathcal B^c(u,x,\varepsilon)) 
& =  \tfrac{1}{h} \Pi^h((u,x), \mathcal B^c(u,x,\varepsilon)) \\[10pt]
& = \tfrac{1}{h} \delta_{u+h}([\varepsilon, 1])\P_{\lfloor uh^{-1}\rfloor \rightarrow \lfloor uh^{-1}\rfloor+1}(x, \R^d \setminus \mathcal B_{\R^d}(x,\varepsilon)) \\[10pt]
& \leq \tfrac{1}{h}\sup_{x \in \R^d}\max_{0 \leq i \leq N-1}\P_{i \rightarrow i+1}(x,  \R^d \setminus \mathcal B_{\R^d}(x,\varepsilon)) \\[10pt]
&\leq \tfrac{1}{h}\sup_{x \in \R^d}\max_{0 \leq i \leq N-1} \E_{\P_{\m2i}} [ | X_{i+1} - X_i|^{4+2\, \alpha}  | X_i = x  ] \\[10pt]
& \leq  \mbox{{$h^\alpha$}} \ee^{-4-2\, \alpha}\sup_{x \in \R^d}\max_{0 \leq i \leq N-1} h \K_i^h(x)\,.
   \end{array} 
\end{equation}
{Giving (\ref{SV33})  using the definition of $ \K^h_i$ (\ref{KURT}) and 
the Kurtosis bound constraint in (\ref{SDKh}) which 
enforces the uniform boundedness of $h \K_i^h(x)$.

\subsection{Proof of  Theorem \ref{Dex} [Well posedness for the discrete problem]} 
 \label{B3} 
We give hereafter the proof of the existence of a solution to (\ref{SDKh}). This solution is unique due to the entropy, in contrast with  the continuous problem.  \\

The cost functional in (\ref{SDKh}) is (\ref{cost2}):
\[
  \I^h(\P^h)   \coloneqq
   h\, \sum_{i=0}^{N-1}   \E_{\P^h_i}  \left( F( \V_i^h(X_i^h), \D_i^h(X_i^h) )  \right) +  h \, \Ec(\P^h | \bP^h)  + \Dist(\P_0^h,\rho_0) + \Dist(\P_1^h,\rho_1)
\]
with $(\V_i^h, \D_i^h)$, the discrete drift and quadratic variation increments defined in (\ref{SMOM}).
The strict convexity  and lower semi-continuity of $\I^h$ are not immediately seen but obtained, as often in optimal transport, using a 
linear change of variable in the conditional moments $(\V_i^h(X_i^h), \D_i^h(X_i^h))$ uncovering the composition of the convex perspective function associated to $F$ and a linear operator.
Using (\ref{SMOM}), simplifying and abusing notations, for    $\beta = 1,2, ...$:
\begin{equation}
  \label{convex}
\begin{array}{ll} 
\P^h   & \rightarrow   \E_{\P^h_i} \left( F( \dfrac{1}{h}  \E_{\P^h_\tri} \left( (X^h_{i+1}-X_i^h)^\beta   \right)  )   \right) 
\mbox{ is decomposed as  }   \\[12pt]  \P^h & \rightarrow   
( \P^h_i(x_i) ,   \dfrac{1}{h} \E_{\P^h_{\m2i} (x_i,.) }   \left( (X^h_{i+1}-x_i)^\beta ) \right)   , \,\, \forall x_i    \\[12pt]
 & \rightarrow  \E_{\P^h_{i}} \left( F( \dfrac{1}{h}   \dfrac{\E_{\P^h_{\m2i}(X_i^h,.) }    \left( (X^h_{i+1}-X_i^h)^\beta  \right)}{  \P^h_{i}(X_i^h)}  )  \right).
\end{array} 
\end{equation}
where  $\P^h_{\m2i} (x_i,.) $ is to be understood as the 
measure on $\R_{t_{i+1}}$ obtained by freezing the first variable $x_i$ in the joint $\R_{t_i} \times \R_{t_{i+1}}$ probability  $\P^h_{\m2i} $. 

The convexity of additional Kurtosis constraint is also a consequence of (\ref{convex}).
The optimization problem (\ref{SDKh}) 
therefore has a unique minimizer (the relative entropy is strictly convex).

The Markovianity of the minimiser 
a consequence of the structure of $\I^h$  and the additivity properties  of the relative entropy giving  
\begin{equation}
  \label{mkine}
  \begin{array}{c} 
  \Ec(\P^h | \bP^h) \ge  \sum_{i=0}^{N-1} \E_{\P^h_i} \left(    \Ec(\P^h_\tri  | \bP^h_\tri )  \right)   \mbox{ with equality if  $\P^h$ is Markov} 
  \end{array}
\end{equation} 
(see \cite{BenamouMFG2}  lemma 3.4). 

\subsection{Proof of  Lemma \ref{Glemma} [Diffusion Coefficients rescaling]} 
 \label{B1} 
\noindent
\textbf{Scaling. }
We start with an approximation step by scaling the volatility.
Consider a solution $X(t)$ of the continuous problem with a finite cost, then introduce 
$
    X^{\alpha,\varepsilon}(t) = \sqrt\alpha X(t) + \sqrt \varepsilon B(t)\,,
$
where $B(t)$ is a $d$-dimensional Brownian motion, independent of $X$.
As a consequence, $a_t^{\alpha,\varepsilon} = \alpha a_t + \varepsilon \operatorname{Id}$.
Since  $a \in [\lambda \operatorname{Id},\Lambda \operatorname{Id}]$, it implies that 
$
 (\alpha \lambda +\varepsilon )\operatorname{Id}  \leq  a_{\alpha,\varepsilon} \leq (\alpha \Lambda +\varepsilon )\operatorname{Id} \,.
$
Now, because $\lambda < \Lambda$, and for $\delta$ small enough, a simple computation shows that taking $
\alpha = 1 - \frac{2 \,\ee}{ \lambda + \Lambda }$ and $
\ee = \delta \, \frac{\lambda + \Lambda }{\Lambda - \lambda} 
$
with $c_1, c_2>0$
there holds 
$\lambda +\delta \leq a_{\alpha,\varepsilon}  \leq \Lambda -\delta$ and 
$\sup|b_{\alpha,\varepsilon}| = \sqrt{\alpha} \sup|b| \text{ note that } \alpha <1 \,.
$
Moreover, when $\delta \to 0$, $\alpha \to 1$, $\ee \to 0$.
Now, remark that the boundary conditions at final time $1$ is lost. The distribution at time $i = 0,1$ of $Y$ is 
$
    g_{t,\varepsilon} \star [T_{\sqrt\alpha}]_\sharp(\rho(i))\,,
$
where $g_{t,\varepsilon}$ is the gaussian kernel of variance $\varepsilon t \operatorname{Id}$ and $T_{\sqrt\alpha}(x) = \sqrt\alpha x$ is a rescaling.
The same computations apply to a discrete process $X^h$, and its characteristics $a_h, b_h$.


\vspace{0.3cm}

\noindent

\textbf{Bounds on the characteristics of the discretized process. } When we consider a time continuous discretisation a continuous process satisfying the bounds on $a,b$ we might lose the bounds. However, performing first the above rescaling gives enough margin to have the discretized process satisfy the bounds.
There holds
\begin{align}\label{driftdisc}
   \E[ X^{\alpha,\varepsilon}(t_{i+1}) - X^{\alpha,\varepsilon}(t_i) \,| \, X(t_i)] 
    & = \E \left[  \int_{t_i}^{t_{i+1}}  \V_{t}^{\alpha,\varepsilon} \,dt \right]\,,
\end{align}
and 
\begin{align}\label{diffdisc}
   \E[ (X^{\alpha,\varepsilon}(t_{i+1}) - X^{\alpha,\varepsilon}(t_i))^2 \,| \, X(t_i)] &=
    \E \left[\int_{t_i}^{t_{i+1}} 2 (X^{\alpha,\varepsilon}(t)-X^{\alpha,\varepsilon}(t_i)\V_{t}^{\alpha,\ee}) + \operatorname{tr}(a_{t}^{\alpha,\ee})dt \right] \\
    & = \E \left[  \int_{t_i}^{t_{i+1}}  \operatorname{tr}(a^{\alpha,\ee}_t) \,dt \right] + C(\Lambda, B)(h^{3/2})\,.
\end{align}
Therefore, for $h$ small one can choose $
\delta_h$ to rescale the process such that the bound holds for the characteristics of the rescaled discrete process $a^{\alpha,\varepsilon,h},b^{\alpha,\varepsilon,h} $.

\textbf{Convergence of $F$. }
This follows simply by observing that
\begin{multline}\label{Fconv}
\E_Y \int_t F(a_Y,b_Y) = \E_Y \int_t G(a_Y,b_Y) \\ =\E_{X,B} \int_t G(a_Y,b_Y)
 =\E_{X,B}\int_t G(\alpha a_X+\ee,\sqrt{\alpha}b_X)
\end{multline}
since $G$ is uniformly Lipschitz in its domain, the estimate on the functional $\mathcal{I}$ follows.

\textbf{Kurtosis. }
The {\it Kurtosis} bound \eqref{KURT} is preserved whenever $\alpha<1$ and $\varepsilon$ is small enough depending on $\alpha$.   

\textbf{Entropy, continuous case. } 
In the time continuous case, the entropy term can be put into $F$, as a Lipschitz function of $a$, thus the convergence is shown as in \eqref{Fconv}.

\textbf{Entropy in the discrete case. }
    Let us first denote by $\P^h_\ee$ the law of $X + \sqrt{\ee} B$, $B$ a standard Brownian motion. 
    We first use the change of variable 
    \[
    X=(x_1,\ldots,x_N) \mapsto Z=(x_1,x_2 - x_1,\ldots,x_N - x_{N-1}).
    \]
    Note that, under this change of coordinates, $\P^h_\ee$ is the convolution of $\P^h$ with a (diagonal) Gaussian kernel $G$ defined on $\R^{dN}$ such that $\int_X \| X\|^2 G(X)dX = \varepsilon$.\footnote{Note that this choice of regularization corresponds to a fixed regularization with a Gaussian kernel on the time interval $[0,1]$ of variance $\varepsilon$.} 
    We first have that    \begin{equation} \nonumber
       \mathcal{H}(\P^h_\ee|\overline{\P}^h) \leq \int_{\R^{dN}}\mathcal{H}(\P^h|[T_X]_\sharp\overline{\P}^h) G(X)dX \,,
    \end{equation}
    where $T_X$ denotes the translation by $X \in \R^{dN}$.
    This inequality is obtained thanks to the convexity of the entropy in the first variable.  
    Let us denote by $\rho$ the image measure of $\P^h$ and $\rho_0$ the image measure of $\overline{\P}_h$ on $\R^{dN}$,
    \begin{equation*}
        \mathcal{H}(\P^h|[T_X]_\sharp\overline{\P}^h) = \int_z \rho(z)\log{\rho(z)/\rho_0(z + X)} dz\,.
    \end{equation*}
    Using the fact that $\rho_0$ is Gaussian, we have $\rho_0(z + X) = \exp{-2N\langle X ,z\rangle + N\| X \|^2} \rho_0(z)$.
    We get 
    \begin{equation*}
        \mathcal{H}(\P^h|[T_X]_\sharp\overline{\P}^h) = \mathcal{H}(\P^h|\overline{\P}^h) + \int_z \rho(z)(-2N\langle X,z\rangle + N\|X\|^2) dz\,.
    \end{equation*}
    Integrating over $X$ with respect to a the centered variable gives $\int_X \int_z \rho(z)(-2N\langle X,z\rangle dz G(X)dX= 0$. This implies
    \begin{equation*}
        \int_X\mathcal{H}(\P^h|[T_X]_\sharp\overline{\P}^h)G(X)dX = \mathcal{H}(\P^h|\overline{\P}^h) +\int_X  N\|X\|^2 G(X)dX\,.
    \end{equation*}
Thus, we get
     $h\mathcal{H}(\P^h_\ee|\overline{\P}^h) \leq  h\mathcal{H}(\P^h|\overline{\P}^h) + \ee\,.
    $
    We now treat similarly the scaling in $\alpha$, 
    \begin{multline}
        \mathcal{H}([S_{\sqrt{\alpha}}]_\sharp \P^h|\overline{\P}^h) = \int_z \rho(z)\log{\alpha^{-dN/2}\rho(z)/\rho_0(\sqrt{\alpha}z)} dz \\
        =-dN/2\log{\alpha} - \int_z \rho(z) \log{\rho_0(\sqrt{\alpha}z)}dz + \int_z \rho(z) \log{\rho(z)}dz \\
        = -dN\log{\alpha}/2 +\mathcal{H}(\P^h|[\overline{\P}^h) - \int_z \rho(z) 2N(\alpha - 1) \|z\|^2 dz\,. 
    \end{multline}
We obtain
\begin{equation} \nonumber
   h \mathcal{H}(\P^h|[S_{\sqrt{\alpha}}]_\sharp\overline{\P}^h) = -d\log{\alpha}/2 + h\mathcal{H}(\P^h|[\overline{\P}^h) - \int_z \rho(z) 2(\alpha - 1) \|z\|^2 dz\,. 
\end{equation}
The quantity $\int_z  \|z\|^2 \rho(z) dz$ is equal to the sum of the second moment of $\rho(t=0)$ and the quadratic variation of $\P^h$ which is finite uniformly in $h$.
This completes the proof the Lemma.

\begin{remark}
Another possibility for this proof is to apply the rescaling and regularisation also to the reference measure. Then $ \Hc( \P^h_\ee | \bP^h_\ee) \le \Hc( \P^h  | \bP^h ) $ is straightforward.
We then have to deal a specific relative entropy with a reference diffusion which depends on $\ee$ via $\bD_\ee = \alpha \bD + \sigma $ ($\alpha$ and  $\sigma$ depend on $\ee$).
 Then (\ref{F11}) becomes 
$
    \I^0_\ee( \tP^0_\ee)  \le   \liminf_{h\searrow 0}  \I^h( \P^h)  + O(\ee) 
$
where $\I^0_\ee = \I^0 +  R_\ee $ with $  R_\ee = \E_{\tP^0_\ee} ( \S^\Ic ( \tD^0_\ee | \bD_\ee) - \S^\Ic ( \tD^0_\ee | \bD))  = O(\bD_\ee - \bD)  = O(\ee) $.
\end{remark}

\subsection{Proofs of  Lemma \ref{ThRegInContinuousTime}  [Regularization time continuous case]} 
 \label{B21} 
The first step concerns space regularization and consists of adding a Gaussian variable of variance $\sigma$ to the process $X(t)$. The corresponding  density at time $0$  is given by $g_{\sigma} \star \rho(0)$ where $\rho(0)$ is the marginal at time $0$ of $X(t)$. 

The second step involves extrapolating the process $X$ in time with a process that has a finite cost. 
Choose $\sigma_0 \in [\lambda,\Lambda]$ and define on $[-2\varepsilon,0]$, the solution of the heat equation starting at time $ t = -2\varepsilon$ with the initial condition $g_{\sigma - (2\varepsilon \sigma_0)} \star \rho(0)$. 
Here we need to assume $2 \varepsilon \sigma_0 < \sigma$ which is always satisfied for $\varepsilon$ small enough.
At time $t = 0$, the density equals $g_\sigma \star \rho(0)$.
For the times $t > 1$, we also use a diffusion with coefficient $\sigma_0$ so that $X(t)$ can be extended to the time interval $[-2\varepsilon, 1 + 2\varepsilon]$ with a finite cost outside the interval $[0,1]$ of order $O(\varepsilon)$.

Let $k: \R \to \R_+$ be a smooth nonnegative function with support in the unit ball, such that $\int k(y) dy = 1$.
We denote the kernel $\eta_\varepsilon(t,x) =  k(t/\varepsilon)/\varepsilon$ with support in the time variable is thus contained in the ball of radius $\varepsilon$. 

We consider
\begin{equation}
\P^0_{t,\ee} := (\P_t^0 * \etae) \quad \quad 
\V^0_{t,\ee} = \dfrac{ ( \V_t^0 \, d\P^0_{t}) * \etae   }{ d \P^0_{t,\ee}} \quad \quad 
\D^0_{t,\ee}  = \dfrac{ ( \D_t^0 \, d\P^0_{t}) * \etae   }{ d \P^0_{t,\ee}} \,\cdot
\end{equation}
By convexity, the drift and diffusion coefficients satisfy the hard constraints of the lemma's hypotheses.  Moreover, these coefficients (drift and diffusion) are smooth in time (due to the convolution with $\eta_\varepsilon$) and space (due to the initial regularization in the first step) so that by standard arguments, there exists a unique Markov process solving the corresponding diffusion equation which is well defined on the time interval $[-\varepsilon,1+\varepsilon]$. We denote such a process by $X^\varepsilon(t)$.
By the control on the time extrapolation and the convexity of the functional $\mathcal{I}^0$, we have
\begin{equation} \nonumber 
   \int_{-\varepsilon}^{1 + \varepsilon} \int_x   G(\V_{t,\varepsilon}(x),\D_{t,\varepsilon}(x)) \,  d\rho_{t,\varepsilon}(x)  \, dt \leq  \mathcal{I}^0(\rho^0_{t},\V^0_{t},\D_{t}^0) + O(\varepsilon) \,.
\end{equation}
We rescale the time interval from $[-\varepsilon,1+\varepsilon]$ to $[0,1]$ to control the soft constraint on the boundary terms.
Due to the hypotheses (hard constraints on the drift and diffusion coefficients), $Y \coloneqq X^\varepsilon((1 + 2\varepsilon)(t+\varepsilon))$ satisfies the hard constraints on the drift $|b_{t,\varepsilon}| \leq B$ and volatility $\lambda \leq a_{t,\varepsilon} \operatorname{Id} \leq \Lambda$.
Importantly, the distributions of $Y(0)$ and $Y(1)$ are (infinite) mixture of Gaussian convolutions of $\rho(0)$ and $\rho(1)$ the marginals of the process $X(t)$, since the kernel has compact support in time in $[-\varepsilon,\varepsilon]$ and the extension in time is by heat diffusion. As a consequence and from the hypotheses on $\Dist$, one has for $i = 0,1$, $|\Dist(\P^Y_{t = i},\rho_i) - \Dist(\P^X_{t = i},\rho_i)| \leq O(\varepsilon) + O(\sigma)$ (where we recall that $\rho_i$ are the (soft) boundary values). 
Thus, we obtain the result $\mathcal{I}(\P^Y) \leq \mathcal{I}(\P^X) + O(\varepsilon) + O(\sigma)$.

\vspace{0.3cm}

\subsection{Proof of  Lemma \ref{LF3}}  
 \label{B4}

For the specific entropy, Proposition \ref{p1} $i)$ gives for the entropic part of $\I(\P^h_\varepsilon)$: 
\begin{equation}
\label{ee1}
    \lim_{h\searrow 0 } h\, \Ec(\P^h_\ee | \bP^h) = \S^{\Ec}(\P^0_\ee | \bP).
\end{equation}
For the integrand part, we first start with
(already in the proof of lemma \ref{Glemma} $iv)$) 
\begin{align*}
   \E[ X(t_{i+1}) - X(t_i) \,| \, X(t_i)] &=
    \E_{\P^h_{\tri,\ee}} \left[  \int_{t_i}^{t_{i+1}}  \V_{t,\ee}(X_t) \,dt  + \int_{t_i}^{t_{i+1}} \sqrt{\D_{t,\ee}(X_t)} \,  dB_t  \right]\\
    & = \E_{\P^h_{\tri,\ee}} \left[  \int_{t_i}^{t_{i+1}}  \V_{t,\ee}(X_t) \,dt \right]\,,\\
   \E[ (X(t_{i+1}) - X(t_i))^2 \,| \, X(t_i)] &=
    \E_{\P^h_{\tri,\ee}}\left[\int_{t_i}^{t_{i+1}} 2 (X(t)-X^h_i)\V_{t,\ee}(X(t)) + \operatorname{tr}(a_{t,\varepsilon}(X_t) dt \right] \\
    & = \E_{\P^h_{\tri,\ee}} \left[  \int_{t_i}^{t_{i+1}}  \operatorname{tr}(a_{t,\varepsilon}(X_t) \,dt \right] + O(h^{1 + \alpha})\,,
\end{align*}
where $\alpha > 0$ depends on the integrability of $b$. 
This formula implies that the  discrete characteristic coefficients $\V^h_i$  and $\D^h_i$ for $\P^h_\ee$
 satisfy for $h$ small enough 
$\lambda Id < \D < \Lambda Id $ and 
$| \V| < B$. 
We also have:
\begin{multline}
    h \, \E_{\P^h_{i,\ee}} [ F\left(  \V_{i,\ee}^h(X(t_i)),\D_{i,\ee}^h(X(t_i)))\right]  =\\ h \, \E_{\P^h_{i,\ee}} \left[ G\left(  \frac{1}{h} \E_{\P^h_{\tri,\ee}} \left[  \int_{t_i}^{t_{i+1}}  \V_{t,\ee}(X(t)) \,dt \right],\frac{1}{h}\E_{\P^h_{\tri,\ee}}  \left[  \int_{t_i}^{t_{i+1}}  \operatorname{tr}(a_{t,\varepsilon}(X(t)))+ O(h^\alpha) \,dt \right] \right) \right]\,.
\end{multline}
Using Jensen's inequality and the Lipschitz assumption on $G$, we get
\begin{equation*} 
    h \, \E_{\P^h_{i,\ee}} [ G(  \V_{i,\ee}^h(X(t_i)),\D_{i,\ee}^h(X(t_i)))] \leq 
    \int_{t_i}^{t_{i+1}}\E_{\P^h_{i,\ee}}G(b_{t,\varepsilon}(X(t),\operatorname{tr}(a_{t,\varepsilon}(X(t))))]dt + O(h^{\alpha + 1})\,.
\end{equation*}
The control on the time boundary marginal terms follows from lemma \ref{Glemma} and \ref{ThRegInContinuousTime}.

\subsection{Proof of  Lemma  \ref{FXlemma} [Regularization time discrete case]} 
 \label{B22}

\textbf{Tightness: }
We need tightness of the quantities $\P^h$, $m^h$ and $n^h$ defined by
\begin{align}\label{EqMoments}
   m_i^h = \frac{1}{h}(\int_{x_{i+1}}  (x_{i+1} - x_i)\P^h_{\tri}(x_{i+1}|x_i)) \P^h_i(x_i)\,,
\\
   n_i^h = \frac{1}{h}(\int_{x_{i+1}}  (x_{i+1} - x_i)^2\P^h_{\tri}(x_{i+1}|x_i)) \P^h_i(x_i)\,.
\end{align}
These quantities are discrete in time; we will define this quantity continuously to obtain tightness in the space of time-space measures.
By considering either the Markov chain or the time-linearly interpolated process, we can consider $\P^h$ either as discrete probabilities or as probabilities on the set of continuous paths.
Let us define more explicitly the time-dependent path of marginals
$\rho^h(t) = 
\sum_{i=0}^N   \mathbf 1_{[t_i,t_{i+1})} \P_{i}^h\,.
$
To obtain tightness of the $\rho^h$, we use linear interpolation of $\P^h$ denoted $\tilde \P^h$.
We start with the tightness of $\P^h$. Recall that we have a control on the $\alpha$-moments ($\alpha > 4$) of $\P^h$. In particular, the linearly interpolated process $X^h(t)$ satisfies, for $s,t \in [0,T], |s-t| < 1/N$, for some $c>0$,
$
    \E[\| X_s^h - X_t^h\|^\alpha]  \leq c |s-t|^{\alpha/2 - 1}
$.
 The Kolmogorov lemma implies that the sequence $\tilde \P^h$ only charges H\"older trajectories with H\"older coefficient $1/2 - 2/\alpha - \varepsilon' >0$ (positive since $\alpha > 4$) for all positive $\varepsilon'$ sufficiently small. 
 Hence the sequence $\tilde \P^h$ is tight and consequently, so is $\rho^h(t)$.

To apply the criterion in \cite{SVbook}, we need to check if the "jump condition" is satisfied on $\P^h$. It is guaranteed by the kurtosis bound. 
We now deal with quantities $m_i^h$ and $n_i^h$.
We have $ m^h = \sum_{i=0}^N   \mathbf 1_{[t_i,t_{i+1})} m_i^h \,,\quad
    n^h = \sum_{i=0}^N  \mathbf 1_{[t_i,t_{i+1})} n_i^h \,.
$
We want to prove that the measures (defined on $[0,1] \times \R^d$) $m^h$ and $n^h$ are tight. For that, we apply the De la Vallée Poussin lemma. We propose to bound the $1<1 + \alpha < 2$ moment, using the inequality $ab \leq \frac 12 (a^2 + b^2)$,
\begin{equation}
    \int_0^1 \int_{\R^d} | x|^{1 + \alpha} n^h(t,x) dt \leq
    h \left(\sum_{i = 1}^N \frac{1}{2} \int | x_i|^{2 + 2\alpha} \P_i^h + \frac{1}{2h^2} \int | x_{i+1} - x_i|^{4} \P^h \right)\,.
\end{equation}
The second term on the right-hand side is bounded due to the Kurtosis bound. It suffices to prove a uniform bound in time of $(2+2\alpha)$-moment to bound the first term, which is bounded by the $4$-moment when $\alpha < 1$ which is true.
Using a telescopic sum, there exists a positive constant $M>0$ such that
\begin{equation}
 \int_0^1 \int_{\R^d} | x_i|^{4} \P^h_{i,i+1}(x_i,x_{i+1}) dt \leq M \sum_{i = 1}^N  (\int |x_0|^{4} + \sum_{j = 1}^i | x_{i+1} - x_i|^{4} \P^h) \,.
\end{equation}
Our hypothesis bounds the first term on the right-hand side on $\rho_0$. The second term is bounded by $O(h)$ due to the kurtosis bound.
The proof is similar for the quantity $m^h$. 

\noindent
\textbf{Scaling the coefficients and space regularization: }
To prepare for the time extrapolation below, we transform $\P^h$ by using Lemma \ref{Glemma}. Denoting $X^h(t)$ any interpolation of $\P^h$, we consider the transformation of the type (for well-chosen parameters $\alpha,\delta$, see Lemma \ref{Glemma})
\begin{equation}
    Y(t) = \sqrt\alpha X^h(t) + \sqrt \delta B(t) + B_0\,,
\end{equation}
with $B(t)$ a standard Brownian motion and $B_0$ a centered Gaussian variable of variance $\varepsilon \operatorname{Id}$. Lemma \ref{Glemma} gives that $Y$ satisfies the estimates on the functional and the hard constraints on the drift and the volatility. The kurtosis bound is also trivially satisfied.

\noindent
\textbf{Time extrapolation: }
We want to again define the convolution in time and space of the probability $\P^h$ extended on the path space by linear interpolation of the curves. However, it is not defined outside the time interval $[0,1]$, which is needed to regularize the process in time and control the density values at times $0,1$. We use the two first steps of the proof of Lemma \ref{ThRegInContinuousTime} to construct an extrapolation on the time interval $[-2\varepsilon,0]$ and $[1,1+2\varepsilon]$ with finite cost $\mathcal I^0(\P) + O(\varepsilon) + O(\delta)$. 
  We now discretize it in time as in the Gamma-Limsup proof so that the cost $\mathcal{I}^h$ on $[-2\varepsilon,0]$ and $[1,1+2\varepsilon]$ is of the order $O(\varepsilon) + O(h)$. 
  This discretization gives us a  time probability measure in discrete time still denoted by $\P^h$. 

\noindent
\textbf{Time regularization: }
We use $\eta_\varepsilon(s,x) \coloneqq k(s/\varepsilon)/\varepsilon$ as in Lemma \ref{ThRegInContinuousTime}.
We now introduce a time-dependent curve with values in the convex set of plans, i.e. probability measures on the product space,
\begin{equation}\Pi^h(t,x,y) = 
\sum_{i} \mathbf 1_{[t_i,t_{i+1})} \P_{i,i+1}^h (x,y) \,,
\end{equation}
which is a collection of time-dependent plans indexed by $t \in [-2\varepsilon,1 + 2 \varepsilon]$. 
To a given plan $\pi(x,y) \in \mathcal{P}(\R^d \times \R^d)$, one can define its conditional moments (exactly as in Formula \eqref{EqMoments}), in probabilistic notations
$
    m^h(\pi) \coloneqq \frac{1}{h}\int_{y}  (y - x)\pi(x,dy)\,,
$
and 
$
    n^h(\pi) \coloneqq \frac{1}{h}\int_{y}  (y - x)^2\pi(x,dy)\,.
$
Now, we observe that  $ \nonumber
m^h(t) = m^h(\Pi(t)) \,,\quad n^h(t) = n^h(\Pi(t))\,.
$
In addition, the marginalization of $\Pi$ on the first variable gives $\rho^h(t)$.
Note also that the following property holds by definition
$
    \forall t \quad [p_2]_\sharp\Pi(t) = [p_1]_\sharp\Pi(t+h)\,,
$
where the $[p_i]_\sharp, i = 1,2$ is the pushforward operator on the first and second variables.
We now regularize in time this object
$
    \Pi_\varepsilon^h = \eta_\varepsilon \star \Pi^h \,.
$
\noindent
Importantly, the regularization on $\Pi^h$ induces a regularization on the quantities $\rho, m, n$:
By linearity, we have that 
\begin{equation} \nonumber
 [\pi_1]_\sharp\Pi^h_\varepsilon = \eta_\varepsilon \star \rho^h  \quad m^h(\Pi^h_\varepsilon) = \eta_\varepsilon \star m^h \quad n^h(\Pi^h_\varepsilon) = \eta_\varepsilon \star n^h \,.
\end{equation}
In addition, it defines a discrete Markov chain:
By linearity, we have the property:
$\forall t \quad [p_2]_\sharp\Pi_\varepsilon(t) = [p_1]_\sharp\Pi_\varepsilon(t+h)$.
From $\Pi_\varepsilon$ we construct a probability on the path space: we  evaluate $\Pi_\varepsilon(ih)$ which is a plan and we note that 
$
    [p_2]_\sharp\Pi_\varepsilon(ih) = [p_1]_\sharp\Pi_\varepsilon((i+1)h)\,,
$
by the property mentioned above.
By the standard gluing lemma \cite{AGS}, we construct a corresponding Markov chain using the joint probabilities $\Pi_\varepsilon(ih)$ that we denote $\P^h_{\de}$.
By convexity of the integral part of the functional, we estimate the cost 
\begin{equation}
    \label{b1} 
\I^h( \P^h_{\de})
\leq \I^h( \P^h)  + O(\varepsilon) + O(h) + O(\delta)\,,
\end{equation}
where the term $O(\varepsilon) + O(h)$ is due to the extension in time of $\P^h$ and its time discretization. 
On $\rho^h_\de,m_\de^h, n_\de^h$ which are smoothing of a tight sequence of measures, we can extract a subsequence that is bounded in $L^\infty_{t,x}$ and smooth in both time and space, uniformly converging on compact subsets.

\noindent
\textbf{Time-rescaling: }
Similarly to Lemma \ref{ThRegInContinuousTime},
we rescale time so that the quantities are all defined on the $[0,1]$. The cost of this rescaling is $O(\varepsilon)$. Note that by rescaling, the discretization timestep is also rescaled, by abuse of notation, we still keep the same letter $h$. More importantly, the rescaling increases the volatility linearly in $\varepsilon$ and we need to check that the hard constraints on the volatility are preserved. This is indeed the case since Lemma \ref{Glemma} has been applied beforehand.

\noindent
\textbf{Riemann integral: }
Recall the functional $\mathcal{J}$ as having three arguments
\begin{equation} \nonumber
    \mathcal{J}(\rho,m,n) = \int_{t,x} F(m/\rho,n/\rho) + \S^{\Ic}_\bD(n/\rho)   \, d\rho(t,x) dt \,,
\end{equation}
where $\S^{\Ic}_\bD$ denotes the specific entropy with reference diffusion $\bD$ using $n,\rho$ as arguments.
Note that we want to study $\mathcal{J}(\rho(\Pi(t)),m^h(\Pi(t)),n^h(\Pi(t)))$.
Then, the discrete formulation $\I^h$ (as well as the specific entropy) can be viewed as a Riemann integral in time:
\begin{multline}
    \I^{h}(\P^h) \geq \sum_{i = 0}^N h \int_{x} F(m^h(\Pi(ih))/\rho(\Pi(ih)),n^h(\Pi(ih))/\rho(\Pi(ih))) \\ + \S^{\Ic}_\bD(n^h(\Pi(ih))/\rho(\Pi(ih)))d\rho(\Pi(ih))\,,
\end{multline}
where $\rho(\Pi(ih))$ is  the marginal of $\Pi(ih)$ on the first variable.  
In addition, we will use below that this Riemann sum coincides with the functional $\mathcal{J}$ evaluated on $(\tro,\tm,\tn)$.
By regularization, $(m_\de^h(t),n_\de^h(t),\rho^h_\de(t))$ is smooth in time. It is also the case for the function integrated in space.
To prove it, we use that the drift and volatility coefficients are bounded in $L^\infty$.
We have
\begin{multline}
    |\int_x F(m_\de^h(t)/\rho_\de(t),n_\de^h(t)/\rho_\de(t)) + \S^{\Ic}_\bD(m_\de^h(t)/\rho_\de(t),n_\de^h(t)/\rho_\de(t)) d\rho_\de(t) \\ - F(m_\de^h(t')/\rho_\de(t'),n_\de^h(t')/\rho_\de(t')) + \S^{\Ic}_\bD(m_\de^h(t')/\rho_\de(t'),n_\de^h(t')/\rho_\de(t')) d\rho_\de(t')| \\ \leq M \int_x |\rho_\de(t') - \rho_\de(t))| \leq M C(\varepsilon)|t-t'|\,,
\end{multline}
where $M$ bounds the integrand (which is possible since all the terms are bounded independently of $h,\varepsilon,\delta$) and $C(\varepsilon)$ is a constant coming from the convolution kernel and depending on $\varepsilon$. Therefore, we get
\begin{equation}\nonumber
   \I^{h}(\P_\de^h)\geq \mathcal{J}(\tro_\de^h,\tm_\de^h,\tn_\de^h) \geq  \mathcal{J}(\rho_\de^h,m_\de^h,n_\de^h) + O(h)\,,
\end{equation}
which gives the inequality \eqref{IneqRiemannSum}.
Note that the $O(h)$ is due to the difference between the Riemann sum and the integral.   
By lower-semicontinuity of $\mathcal{J}$, one has 
$
    \mathcal{J}(\rho_\de,m_\de,n_\varepsilon) \leq \lim_{h\to 0} \mathcal{J}(\rho_\de^h,m_\de^h,n_\de^h)\,.
$
In particular, the two previous inequalities  imply $iii)$ of the lemma and  
$
    \mathcal{J}(\rho_\de,m_\de,n_\de) \leq \lim_{h\to 0} \I^h(\P^h_\de)\,.
$

\noindent
\textbf{Conclusion: }
The coefficients $m_\de^h,n_\de^h$, and $\rho^h_\de$ converge uniformly in space and time:
It implies that $b_h(ih) = \frac{m_\de^h(ih)}{\rho^h_\de(ih)}$ and 
$a_h(ih) = \frac{n_\de^h(ih)}{\rho^h_\de(ih)}$ 
converge uniformly on every compact in time and space since $\rho_\varepsilon$ is bounded below by a positive constant on every compact set.
Thus, we can apply Theorem \ref{ThStroockVaradhanExtendedInTime} to obtain $ii)$ of the lemma: the sequence $\P_\varepsilon^h$ uniformly converges to $\P_\varepsilon$ the probability on the path space of a diffusion process. By uniform convergence, the drift and diffusion coefficients can be identified by $b_\de(t,x) = \frac{m_\de(t,x)}{\rho^h_\de(t,x)}$ and
$a_\de(t,x) = \frac{n_\de(t,x)}{\rho^h_\de(t,x)}$.
Finally, we get 
\begin{equation} \nonumber
 \mathcal{J}(\rho_\de,m_\de,n_\de) \leq \lim\inf_{h \to 0}\I^h(\P^h) + O(\varepsilon) +O(\delta)\,.
\end{equation}

\subsection{Additional  Lemmas} 
 \label{A5} 

{
\begin{lemma}
\label{lemmaMH}

The quantities $\V_t$, $\D_t$ are the characteristic coefficients of the semi-martingale $X$ with law $\P \in \Pc^1$. 
For every $p \geq 1$ and $q,q' > 1$, we have, in full generality:
\begin{align}
\mathbb E\big[|X_t-X_s|^p\big]  & \leq C_p \Big((t-s)^{p(1-\frac{1}{q})}\big(\mathbb E[\int_0^1|b_u(X_u)|^{\max(p,q)}du]\big)^{\min(p/q, 1)} \nonumber \\
& +(t-s)^{\frac{p}{2}(1-\frac{1}{q'})}\big(\mathbb E[\int_0^1|a_u(X_u)|^{\max(p/2,q')}du]\big)^{\min(p/2q'), 1)}\Big),
 \end{align}
where $C_p>0$ only depends on $p$. 
\end{lemma}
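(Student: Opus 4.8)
If either expectation on the right-hand side is infinite there is nothing to prove, so assume both are finite. Since $\mathbb P\in\Pc^1$, the process $X$ is an $(\mathcal F,\mathbb P)$-semimartingale with characteristics $(b,a)$, so for $0\le s\le t\le 1$,
\[
X_t-X_s=\int_s^t b_u(X_u)\,du+\big(M^{\mathbb P}_t-M^{\mathbb P}_s\big),
\]
where $r\mapsto M^{\mathbb P}_{s+r}-M^{\mathbb P}_s$ is a continuous martingale vanishing at $r=0$ with matrix quadratic-variation increment $\int_s^t a_u(X_u)\,du$. By the elementary inequality $|x+y|^p\le 2^{p-1}(|x|^p+|y|^p)$, it suffices to bound $\mathbb E\big[\big|\int_s^t b_u(X_u)\,du\big|^p\big]$ and $\mathbb E\big[|M^{\mathbb P}_t-M^{\mathbb P}_s|^p\big]$ separately and to recombine with a constant depending only on $p$.

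\emph{Drift term.} Using $\big|\int_s^t b_u(X_u)\,du\big|\le\int_s^t |b_u(X_u)|\,du$ and Hölder's inequality in time on the interval of length $t-s$ with exponent $\bar q:=\max(p,q)$,
\[
\Big|\int_s^t b_u(X_u)\,du\Big|^p\le (t-s)^{p(1-1/\bar q)}\Big(\int_s^t |b_u(X_u)|^{\bar q}\,du\Big)^{p/\bar q}.
\]
Since $p/\bar q=\min(p/q,1)\le 1$, the map $z\mapsto z^{p/\bar q}$ is concave, so Jensen's inequality (after enlarging the time integral to $[0,1]$) yields
\[
\mathbb E\Big[\Big|\int_s^t b_u(X_u)\,du\Big|^p\Big]\le (t-s)^{p(1-1/\bar q)}\Big(\mathbb E\Big[\int_0^1 |b_u(X_u)|^{\max(p,q)}\,du\Big]\Big)^{\min(p/q,1)}.
\]
Because $\bar q\ge q$ we have $p(1-1/\bar q)\ge p(1-1/q)$, and since $0\le t-s\le 1$ this gives $(t-s)^{p(1-1/\bar q)}\le (t-s)^{p(1-1/q)}$, which is exactly the first summand of the claimed bound.

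\emph{Martingale term.} By the Burkholder–Davis–Gundy inequality (valid for every $p>0$, and for the $\R^d$-valued continuous martingale $M^{\mathbb P}$ up to a dimensional constant), together with $\operatorname{tr}a_u(X_u)\le\sqrt d\,|a_u(X_u)|$ for $a_u(X_u)\in\S^d_+$,
\[
\mathbb E\big[|M^{\mathbb P}_t-M^{\mathbb P}_s|^p\big]\le C_p\,\mathbb E\Big[\Big(\int_s^t \operatorname{tr}a_u(X_u)\,du\Big)^{p/2}\Big]\le C_p'\,\mathbb E\Big[\Big(\int_s^t |a_u(X_u)|\,du\Big)^{p/2}\Big].
\]
Applying Hölder in time with exponent $\bar q':=\max(p/2,q')$ and then Jensen (note $p/(2\bar q')=\min(p/(2q'),1)\le 1$, and enlarge the integral to $[0,1]$),
\[
\mathbb E\big[|M^{\mathbb P}_t-M^{\mathbb P}_s|^p\big]\le C_p''\,(t-s)^{\frac p2(1-1/\bar q')}\Big(\mathbb E\Big[\int_0^1 |a_u(X_u)|^{\max(p/2,q')}\,du\Big]\Big)^{\min(p/(2q'),1)}.
\]
As before, $\bar q'\ge q'$ and $t-s\le 1$ allow replacing the time exponent by $\frac p2(1-1/q')$. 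Adding the two estimates gives the statement, with $C_p$ absorbing $2^{p-1}$, the BDG constant and the dimensional factor.

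\emph{Where the work is.} There is no genuine difficulty: the argument is a careful application of Hölder's and Jensen's inequalities in the time variable together with BDG. The only points needing mild care are (i) invoking BDG in the full range $p>0$ and for a vector-valued martingale, controlling its matrix quadratic variation by $|a_u(X_u)|$; and (ii) the exponent bookkeeping — verifying in both cases $q\le p$ and $q>p$ (respectively $q'\le p/2$ and $q'>p/2$) that $p/\max(p,q)=\min(p/q,1)$, and then exploiting $t-s\le 1$ to weaken $(t-s)^{p(1-1/\max(p,q))}$ down to the stated $(t-s)^{p(1-1/q)}$.
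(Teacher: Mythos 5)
Your proof is correct and follows essentially the same route as the paper's: decompose $X_t-X_s$ into the drift integral and the martingale increment, estimate the drift by Hölder in time plus Jensen, and the martingale part by Burkholder--Davis--Gundy followed by the same Hölder/Jensen step with $q'$. Your exponent bookkeeping (always using $\max(p,q)$, resp.\ $\max(p/2,q')$, and then weakening the time exponent via $t-s\le 1$) is just a repackaging of the paper's case distinction $p\gtrless q$, so no substantive difference.
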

\begin{proof}[Proof of the lemma]
In the following, the notation $C_p$ denotes a positive number that only depends on $p$ and that may vary at each occurence.
We start with the following observation: for any measurable random process $f_u$, we have, for any $q>1$, by H\"older's inequality
\begin{align}\mathbb E\big[|\int_s^tf_udu|^p\big] & \leq (t-s)^{p(1-\frac{1}{q})}
\left\{
\begin{array}{lll}
\mathbb E[\int_0^1|f_u|^pdu] & \mathrm{if} & p \geq q \nonumber \\ \\
\mathbb E[\int_0^1|f_u|^qdu]^{p/q} & \mathrm{if} & p \leq q 
\end{array}
\right. \nonumber \\
&=  (t-s)^{p(1-\frac{1}{q})} (\mathbb E[\int_0^1|f_u|^{\max(p,q)}du])^{\min(1,p/q)} \label{eq: holder opt}
\end{align}
Next, from
$X_t-X_s = \int_s^t b_u(X_u)du+\int_s^ta_u(X_u)^{1/2}dB_u$
and thus
$\mathbb E[|X_t-X_s|^p] \leq C_p(\mathbb E[|\int_s^t b_u(X_u)du|^p]+\mathbb E[|\int_s^ta_u(X_u)^{1/2}dB_u|^p])$
since $p\geq 1$, we successively have
$\mathbb E[|\int_s^t b_u(X_u)du|^p] \leq (t-s)^{p(1-\frac{1}{q})} (\mathbb E[\int_0^1|b_u(X_u)|^{\max(p,q)}du])^{\min(1,p/q)}$
and
\begin{multline}
 \mathbb E[|\int_s^t a_u(X_u)^{1/2}du|^p]   \leq C_p \mathbb E[|\int_s^t a_u(X_u)du|^{p/2}]
 \\ \leq (t-s)^{\frac{p}{2}(1-\frac{1}{q'})} (\mathbb E[\int_0^1|a_u(X_u)|^{\max(p/2,q')}du])^{\min(1,p/2q')}
\end{multline}
by \eqref{eq: holder opt} and the Burkholder-Davis-Gundy inequality. The conclusion follows.
\end{proof}
}

A brief excerpt from chapter 9 of \cite{AGS} and 
\cite{Trevisan1,Trevisan16}:

\begin{definition}[perspectives functions over measures a.k.a. entropies]
\label{defPF}
Assuming  that $ F(.) = \theta (|.|) :\R^d \rightarrow [0,\infty]$
 is  a convex lower semi-continuous function 
with superlinear growth at infinity.
The perspective function (aka the entropy) of $F$ evaluated at the measures 
$(M,\nu)$ is defined as 
\[
(\nu,M) \rightarrow \Fc(  M  | \nu) = \left\{ 
\begin{array}{l}
 \E_{\nu}(F(\dfrac{dM}{d\nu}))   \mbox{  if $M \ll \nu$.}  \\
     +\infty \mbox{ else.}
     \end{array}  \right.
\]
The classic example is the Shanon relative entropy $F(.)  = -\log{ . | {\bar M}}$.  The ``nice'' properties of $\Fc$ are 
inherited from its dual formulation (\ref{dualEntropy}), 
\end{definition}

\begin{definition}[Weak Fokker-Planck Solutions]
\label{defFP}
We will refer to this property as:  ``$(\nu_t,\V_t,\D_t) \in FP(\rho_{0}, \rho_1$''). \\

A  curve $(\nu_t)_{t \in [0,1]}$ of probability measures is  a weak 
solution of the Fokker-Planck with drift coefficients $(\V_t)$ and diffusion 
coefficients $(\D_t)$  if  $(\V_t,\D_t) \in L^1_{t,x}(\nu_t)$ and for all $f \in C_b^{1,2}((0,1) \times \R^d)$
\[
\int_0^1 \int [ \partial_t f+ \V_t \, \partial_x f + \D_t \,  \partial_{xx} f ] \,  d\nu_t(x) \, dt = 0 
\]
In our case we add the initial-final marginal conditions : 
$
\nu_{0,1}  = \rho_{0,1}
$.
It is not restrictive to assume that $(\nu_t)$ is  narrowly continuous. 
For more details and $d>1$ see \cite{Trevisan16} definition 2.2 and remark 2.3.
\end{definition}

\begin{proposition}
\label{pE}

\begin{itemize}
\item[i)] $\Fc$ in definition \ref{defPF} is jointly convex and lower semi continuous 
in $(M,\nu)$. 
\item[ii)] 
$(M^h,\nu^h)$ two sequences of Borel positive measure in $\R^d$, such that, $\nu^h$ weakly converges to $\nu^0$, 
$M^h$ is a.c. w.r.t. $\nu_h$ for all $h$ and 
$
\sup_h \E_{\nu^h}(F( \dfrac{dM^h}{d\nu^h})) < \infty  .
$
Then $M^0$ is a.c. w.r.t. $\nu^0$ and 
$
\lim \inf_{h\searrow0} \E_{\nu^h}(F(\dfrac{dM^h}{d\nu^h})) \ge  \E_{\nu^0}(F(\dfrac{dM^0}{d\nu^0}))
$.
\item[iii)] Let  $\etae$ be a smooth regularisation kernel with $k$ bounded derivatives. 
Denote $M_\ee = M * \etae$, $\nu_\ee = \nu * \etae$. Then, 
$
\Fc( M_\ee | \nu_\ee ) \le \Fc( M | \nu )
$
\item[iv)] If $(\nu_t)$ is a weak solution of $FP_{\rho_{0,1}}(\V_t,\D_t)$ ,  $(\V_t,\D_t) \in \Lc^p(\nu_t)$   $(\nu_{t,\ee})$  is 
a weak solution of $FP_{\rho_{0,1,\ee}}(\V_{t,\ee},\D_{t,\ee})$ where for all $t$ 
$(\nu_{t,\ee} := (\nu_t * \etae)$ and we mollify the ``moments'': 
\[\
\V_{t,\ee} = \dfrac{( \V_t \, d\nu_t) * \etae   }{ d \nu_{t,\ee}} \quad \quad 
\D_{t,\ee}  = \dfrac{( \D_t \, d\nu_t) * \etae   }{ d \nu_{t,\ee}}. 
\]
Then, $(\V_{t,\ee},\D_{t,\ee})$ are in $\Lc^p(\nu_{t,\ee})$ and well defined uniformly bounded $C^k$ densities:
$
\sup_{t \in [0,1]} \| (\V_{t\ee},\D_{t,\ee})\|_{C^k_b(B)}  < +\infty 
$
for all bounded set $B\in \R^d$.  
\item[v)] Applying $iii)$ to the setting in $iv)$ we get 
$
\E_{\nu_{t,\ee}}(F( \V_{t,\ee} )) \le 
\E_{\nu_{t}}(F( \V_{t})) \,.
$
\end{itemize} 
\end{proposition}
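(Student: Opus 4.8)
The plan is to derive all five items from two elementary structural facts about entropy functionals over measures, both classical (see \cite[Chapter 9]{AGS} and \cite{Trevisan1,Trevisan16}): the dual (variational) representation
\[
\Fc(M\,|\,\nu)=\sup\Big\{\textstyle\int \langle\psi,dM\rangle-\int F^*(\psi)\,d\nu \ :\ \psi\in C_b\Big\},
\]
which is exactly the one already used in \eqref{dualEntropy} (here $F^*$ is the Legendre transform of $F=\theta(|\cdot|)$, finite and continuous everywhere by superlinearity), together with the translation invariance $\Fc((\tau_y)_\# M\,|\,(\tau_y)_\# \nu)=\Fc(M\,|\,\nu)$. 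For \textbf{(i)} I would simply note that each map $(M,\nu)\mapsto\int\langle\psi,dM\rangle-\int F^*(\psi)\,d\nu$ is affine and, for a fixed bounded continuous $\psi$, narrowly continuous; hence the supremum $\Fc$ is jointly convex and narrowly lower semicontinuous. The only task is to transcribe this into the present notation.

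For \textbf{(ii)} I would first turn the energy bound into tightness of $(M^h)$: superlinearity gives, for every $\epsilon>0$, a constant $C_\epsilon$ with $|s|\le\epsilon F(s)+C_\epsilon$, whence $M^h(\{|x|>R\})\le\epsilon\sup_h\Fc(M^h|\nu^h)+C_\epsilon\,\nu^h(\{|x|>R\})$; since $(\nu^h)$ converges narrowly it is tight, so the right-hand side is uniformly small for $\epsilon$ small and $R$ large, and the total masses are bounded, so $(M^h)$ is tight. Passing to a subsequence along which $M^h\rightharpoonup M^0$ and invoking the joint lower semicontinuity of \textbf{(i)} yields $\liminf_h\Fc(M^h|\nu^h)\ge\Fc(M^0|\nu^0)$, which in particular forces $\Fc(M^0|\nu^0)<\infty$, hence $M^0\ll\nu^0$.

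For \textbf{(iii)} I would write $M_\ee=\int(\tau_y)_\# M\,\etae(y)\,dy$ and $\nu_\ee=\int(\tau_y)_\# \nu\,\etae(y)\,dy$, and conclude from the joint convexity of \textbf{(i)}, Jensen's inequality for the probability measure $\etae(y)\,dy$, and translation invariance of $\Fc$ that
\[
\Fc(M_\ee|\nu_\ee)\le\int\Fc\big((\tau_y)_\# M\,|\,(\tau_y)_\# \nu\big)\,\etae(y)\,dy=\Fc(M|\nu).
\]

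For \textbf{(iv)} I would rewrite the weak identity of Definition \ref{defFP} in divergence/moment form, $\partial_t\nu_t=-\partial_x(\V_t\nu_t)+\partial_{xx}(\D_t\nu_t)$ in the sense of distributions on $(0,1)\times\R^d$, and observe that space-convolution with $\etae$ commutes with all of these linear operations while, by the very definition of the mollified moments, $(\V_t\nu_t)*\etae=\V_{t,\ee}\,\nu_{t,\ee}$ and $(\D_t\nu_t)*\etae=\D_{t,\ee}\,\nu_{t,\ee}$; hence $(\nu_{t,\ee},\V_{t,\ee},\D_{t,\ee})$ solves the same weak equation with mollified marginals. The membership $\V_{t,\ee},\D_{t,\ee}\in\Lc^p(\nu_{t,\ee})$ is inherited from $\Lc^p(\nu_t)$ via Jensen, and since $\etae\in C^k_b$ the densities of $\nu_{t,\ee}$, $(\V_t\nu_t)*\etae$ and $(\D_t\nu_t)*\etae$ are $C^k$ with $C^k_b(B)$-norms bounded uniformly in $t$ by the $\Lc^1$-bounds on $(\nu_t,\V_t,\D_t)$, so $\V_{t,\ee},\D_{t,\ee}$ inherit uniform $C^k_b(B)$-bounds on bounded sets on which $\nu_{t,\ee}$ is bounded away from zero (this holding on every bounded set once, as in the preliminary space-regularization step, $\nu_t$ has full support); the rigorous $d>1$ statement is \cite[Def. 2.2, Rem. 2.3]{Trevisan16}. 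Then \textbf{(v)} is immediate from \textbf{(iii)} applied to $M=\V_t\,d\nu_t$, since $dM/d\nu_t=\V_t$ and $M*\etae=\V_{t,\ee}\,d\nu_{t,\ee}$: $\E_{\nu_{t,\ee}}(F(\V_{t,\ee}))=\Fc(\V_{t,\ee}d\nu_{t,\ee}|\nu_{t,\ee})\le\Fc(\V_t\,d\nu_t|\nu_t)=\E_{\nu_t}(F(\V_t))$. The step I expect to be the main obstacle is \textbf{(iv)}: because $\V_{t,\ee},\D_{t,\ee}$ depend nonlinearly on $\nu$ through the division by $\nu_{t,\ee}$, the proof that mollification preserves the weak Fokker--Planck property must be carried out on the \emph{linear} moment measures $\V_t\nu_t$, $\D_t\nu_t$ rather than on $\V_t,\D_t$ themselves, and one has to verify that the mollified coefficients remain admissible (integrable, and genuinely smooth where $\nu_{t,\ee}>0$) — which is where I would lean on the weak-solution framework of \cite{Trevisan16,Trevisan1}.
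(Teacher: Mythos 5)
Your proposal is correct and takes essentially the same route as the paper, whose proof is simply a citation of the dual-representation lemmas (Lemmas 9.4.3--9.4.4 of \cite{AGS} and Lemma A.1 of \cite{Trevisan16}): your items (i), (iii) and (v) are exactly the arguments those lemmas rest on (supremum of affine narrowly continuous functionals, Jensen plus translation invariance for convolution), and (iv) is the standard mollification carried out on the linear moment measures $\V_t\nu_t$, $\D_t\nu_t$, which is also what the paper intends. Your only addition is the tightness step in (ii), which harmlessly supplies the weak limit $M^0$ that the paper's statement leaves implicit.
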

\begin{proof} 
These results are a direct application of the dual form of the entropy given in lemma 9.4.3 and 9.4.4 \cite{AGS}, see also lemma A.1 \cite{Trevisan16}. 
\end{proof}

\end{document}